\newtheorem{thm}{Theorem}[section]
\newtheorem{cor}[thm]{Corollary}
\newtheorem{lem}[thm]{Lemma}
\newtheorem{prop}[thm]{Proposition}
\theoremstyle{definition}
\newtheorem{defn}[thm]{Definition}
\theoremstyle{remark}
\newtheorem{rem}[thm]{Remark}
\numberwithin{equation}{section} \theoremstyle{quest}
\newtheorem{quest}[]{Question}
\numberwithin{equation}{section} \theoremstyle{prob}
\numberwithin{equation}{section} \theoremstyle{answer}
\numberwithin{equation}{section}
\theoremstyle{fact}
\numberwithin{equation}{section}
\theoremstyle{facts}
\newtheorem{facts}[thm]{Facts}
\numberwithin{equation}{section}
\begin{document}

\title[Triang-mmSp]{Curvature based triangulation of metric measure spaces}%
\author{%Anca-Iuliana Bonciocat$^{*}$,
Emil Saucan}%}$^{\dag}$}% Shahar Mendelson$^{\ddag}$}%
\address{%IMAR, Bucharest$^{*}$,
Department of Mathematics, Technion, Haifa, Israel}%$^{\dag,\ddag}$}%
\email{%Anca.Bonciocat@imar.ro$^{*}$,
semil@tx.technion.ac.il}%$^{\dag}$} %, shahar@tx.technion.ac.il$^{\ddag}$}%

\thanks{%Second author's
Research supported by the Israel Science Foundation Grant 666/06 and by European Research Council under the European Community's Seventh Framework Programme
(FP7/2007-2013) / ERC grant agreement n${\rm ^o}$ [203134].}%
\subjclass{Primary: 53C23, 53B20, 60D05, Secondary: 30C65 }%, 57R05, 57M60} %
\keywords{Ricci curvature, metric measure space, triangulation, discretization, volume growth, quasimeromorphic mapping}%

\date{\today}
%\dedicatory{}%
%\commby{}%
% ----------------------------------------------------------------

\begin{abstract}
We prove that a Ricci curvature based method of triangulation of compact Riemannian manifolds, due to Grove and Petersen, extends to the context of weighted Riemannian manifolds and more general metric measure spaces. In both cases the role of the lower bound on Ricci curvature is replaced by the curvature-dimension condition ${\rm CD}(K,N)$. We show also that for weighted Riemannian manifolds the triangulation can be improved to become a thick one and that, in consequence, such manifolds admit weight-sensitive quasimeromorphic mappings. An application of this last result to information manifolds is considered.

Further more, we extend to weak ${\rm CD}(K,N)$ spaces the results of Kanai regarding the discretization of manifolds, and show that the volume growth of such a space is the same as that of any of its discretizations.

\end{abstract}

\maketitle
%----------------------------------------------------

\section{Introduction}

The existence of triangulation on geometric spaces can hardly be underestimated both in Pure and in Applied Mathematics, in particular that of certain special types (see, e.g. \cite{ca1}, \cite{cms}, \cite{tu}, \cite{pe}, \cite{s1}, \cite{S2}, \cite{Br}, and \cite{Re}, \cite{ab}, \cite{SAZ}, \cite{DLJZHYG}, respectively).

For Riemannian manifolds, a number of possible constructions exist, including those that produce special types of triangulations. Amongst them we mention, in chronological order and without any pretention of being exhaustive,  \cite{ca1}, \cite{GP}, \cite{pe}, \cite{ep+:92} (Theorem 10.3.1), \cite{s1}, \cite{Br}.

We concentrate here on the method employed in \cite{GP}. (See also \cite{Be} for a history of this approach). % construction)
The advantage of this method is, besides its elegance, the fact that it is highly geometric in spirit, using solely the {\em intrinsic} geometric differential properties of the manifold. % - we do not expand here upon this method, since we shall present it in detail in Section 2.
The purpose of the present note is to show that the construction devised in \cite{GP} can be extended, almost without any modifications to metric measure spaces (except, of course, the obvious necessary adaptations to the more general context).

The reason behind this is dual: On one hand there exists a feeling in the community, that, although the tools and results developed by Gromov, Lott, Villani, Sturm and others are most elegant, they lack, so far, any concrete and efficient application. We wish, therefore, to further emphasize that the notions of curvatures for metric measure spaces are highly natural by giving an extension of a classical problem and its (also classical) solution, to the new context. On the other hand, it appears that there exists a real interest in the triangulation and representation of the {\em information manifold}, that is the space of parameterized probability measures (or the {\it statistical model}) equipped with the Riemannian metric induced by the {\it Fischer information metric} onto the Euclidean sphere (see Section 4.1.3 below).

The reminder of this paper is structured as follows: As already mentioned above, we present, in Section 2, the construction of Grove and Petersen. (This and the following section are the most ``didactic'', thus we proceed rather slowly; however, afterwards the pace of the exposition will become more brisk.) Next, in Section 3, we introduce the necessary notions and results regarding curvatures of metric measure spaces. (Unfortunately, the style is rather technical, since we had to cover quite a large number of definitions and results.)  Section 4 constitutes the heart of our paper, in the sense that we show therein how to extend Grove-Petersen construction to the metric measure setting. We follow, in Sections 5, with an extension, to weak ${\rm CD}(K,N)$ spaces, of Kanai's results regarding the discretization of manifolds, and show, in particular, that the volume growth of such a space coincides with that of any of its discretizations. In a sense, this represents the second part of the present paper, related to, yet distinct, from the main triangulation problem %tackled(approached,
considered in the previous sections. We conclude, in Section 6, with a few very brief remarks.

%A number of possible variants (according to the different curvature notions introduced by different researchers -- ) are discussed. We follow, in Sections 5, with an analysis regarding the possible relaxation of the said curvature conditions (and their replacement by analytic/metric ones?). Finally, in Section 6 (we hope to) analyze(?) the extension(adaptation) of the method under investigation(that constitutes the object of our study) to more general Alexandrov spaces.

%-----------------------------------------------------------------------

\section{The Grove-Petersen construction}

In the following $M^n = (M^n,g)$ denotes a closed, connected $n$-dimensional Riemannian manifold such that it has sectional curvature $k_M$ bounded from below by $k$,  ${\rm diam}M^n$ bounded from above by $D$, and ${\rm Vol}M^n$ is bounded from below by $v$.
Since only volumes of balls arguments are employed, %In fact (by Meyer's Theorem?) \marginpar{\tiny \bf Check!!!}
one can replace the last condition by the more general one ${\rm Ric_M} \geq (n-1)k$ (see, e.g., \cite{Pet}).
In fact, this very relaxation of the conditions actually helps us formulate the more general problem we are dealing with in the sequel. %(i) ${\rm Ric

The basic idea is to use so called {\it efficient packings}:

\begin{defn}
Let $p_1,\ldots,p_{n_0}$ be points  $\in M^n$,  satisfying  the
following conditions:
\begin{enumerate}
\item The set $\{p_1,\ldots,p_{n_0}\}$ is an $\varepsilon$-net on $M^n$, i.e. the
balls $\beta^n(p_k,\varepsilon)$, $k=1,\ldots,n_0$ cover $M^n$;
\item The balls (in the intrinsic metric of $M^n$) $\beta^n(p_k,\varepsilon/2)$ are pairwise
disjoint.
\end{enumerate}
Then the set $\{p_1,\ldots,p_{n_0}\}$ is called a {\it minimal
$\varepsilon$-net} and the packing with the balls
$\beta^n(p_k,\varepsilon/2)$, $k=1,\ldots,n_0$, is called an {\it
efficient packing}. The set $\{(k,l)\,|\,k,l = 1,\ldots,n_0\; {\rm
and}\; \beta^n(p_k,\varepsilon) \cap \beta^n(p_l,\varepsilon) \neq
\emptyset\}$ is called the {\it intersection pattern} of the minimal
$\varepsilon$-net (of the efficient packing).
\end{defn}

%Let $k$ and $K$ denote the lower bound, respective the upper bound,
%for the sectional curvatures, and let $D$ denotes the
%upper bound of the diameter of $M^n$.

Efficient packings have the following important properties, which we
list below (for proofs see \cite{GP}):

\begin{lem}[\cite{GP}, Lemma 3.2]
There exists $n_1 = n_1(n,k,D)$, such that if
$\{p_1,\ldots,p_{n_0}\}$ is a minimal $\varepsilon$-net on $M^n$, then $n_0
\leq n_1$.
\end{lem}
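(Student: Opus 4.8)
The plan is to run a volume comparison argument powered by the Bishop--Gromov inequality, which is available precisely because the hypothesis has been relaxed to ${\rm Ric}_M \geq (n-1)k$. The only feature of the net I would actually exploit is condition (2), the pairwise disjointness of the packing balls $\beta^n(p_k,\varepsilon/2)$; the covering ($\varepsilon$-net) condition (1) is not needed for this particular bound. The idea is that disjointness caps the total volume from above, while a relative volume estimate bounds each individual ball from below, and dividing the two gives a bound on the cardinality $n_0$.

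First I would record the two volume estimates. Since the balls $\beta^n(p_k,\varepsilon/2)$, $k=1,\ldots,n_0$, are pairwise disjoint and all contained in $M^n$, additivity of volume gives
\[
\sum_{k=1}^{n_0} {\rm Vol}\,\beta^n(p_k,\varepsilon/2) \;\leq\; {\rm Vol}\,M^n .
\]
For the matching lower bound on each summand I would invoke the relative (monotonicity) form of Bishop--Gromov: writing $V_k(r)$ for the volume of a geodesic ball of radius $r$ in the simply connected model space of constant curvature $k$, the normalized ratio ${\rm Vol}\,\beta^n(p,r)/V_k(r)$ is non-increasing in $r$. Comparing radius $\varepsilon/2$ with radius $D$, and noting that ${\rm diam}\,M^n \leq D$ forces ${\rm Vol}\,\beta^n(p_k,D) = {\rm Vol}\,M^n$, one obtains
\[
\frac{{\rm Vol}\,\beta^n(p_k,\varepsilon/2)}{V_k(\varepsilon/2)} \;\geq\; \frac{{\rm Vol}\,\beta^n(p_k,D)}{V_k(D)} \;=\; \frac{{\rm Vol}\,M^n}{V_k(D)},
\]
so that ${\rm Vol}\,\beta^n(p_k,\varepsilon/2) \geq \bigl(V_k(\varepsilon/2)/V_k(D)\bigr)\,{\rm Vol}\,M^n$. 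Summing over $k$ and combining with the first display, the factors ${\rm Vol}\,M^n$ cancel and I am left with
\[
n_0 \;\leq\; \frac{V_k(D)}{V_k(\varepsilon/2)} \;=:\; n_1,
\]
which depends only on $n$, $k$, $D$ (and on the scale $\varepsilon$, treated here as a fixed parameter of the net), as claimed.

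The step I expect to require the most care is the lower volume bound. The crude Bishop inequality only yields an \emph{upper} bound on the volume of a ball, which is useless here; it is essential to use the monotonicity of the normalized ratio ${\rm Vol}\,\beta^n(p,r)/V_k(r)$, and to use the diameter hypothesis to identify the large comparison ball with all of $M^n$. Once that is in place the remainder is bookkeeping, and the explicit shape of $V_k$ (an integrand built from $\sin$, a linear term, or $\sinh$ according to the sign of $k$) need not be written out. I would also remark that the same disjointness-plus-comparison scheme, applied locally to the balls meeting a fixed $\beta^n(p_k,\varepsilon)$, bounds the valence of the intersection pattern independently of $\varepsilon$, which is the form in which volume comparison is reused later in the construction.
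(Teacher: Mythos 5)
Your proposal is correct and follows essentially the same route as the paper: both use the disjointness of the $\varepsilon/2$-balls together with the relative Bishop--Gromov monotonicity at radii $\varepsilon/2$ and $D$ (so that the large ball is all of $M^n$), arriving at the identical bound $n_1 = {\rm Vol}\,B(\tilde p, D)/{\rm Vol}\,B(\tilde p, \varepsilon/2)$ in the model space. The only cosmetic difference is that the paper applies the comparison to a single ball of minimal volume rather than to every ball followed by summation and cancellation.
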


\begin{lem}[\cite{GP}, Lemma 3.3]
There exists $n_2 = n_2(n,k,D)$, such that for any $x \in M^n$,
$\left|\{j \,|\, j = 1,\ldots,n_0\; {\rm and}\;
\beta^n(x,\varepsilon) \cap \beta^n(p_j,\varepsilon) \neq
\emptyset\}\right| \leq n_2$, for any minimal $\varepsilon$-net
$\{p_1,\ldots,p_{n_0}\}$.
%the intersection pattern $\{(i_0,\}$ of ... has at most $N_2$ elements.
\end{lem}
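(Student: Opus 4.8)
The plan is to run a volume-comparison packing argument, which is exactly the feature I expect to survive the later passage to ${\rm CD}(K,N)$ spaces. Fix $x \in M^n$ and set $J = \{j : \beta^n(x,\varepsilon) \cap \beta^n(p_j,\varepsilon) \neq \emptyset\}$, $N = |J|$. First I would record the elementary triangle-inequality consequences of membership in $J$: if $j \in J$ there is a point lying in both $\beta^n(x,\varepsilon)$ and $\beta^n(p_j,\varepsilon)$, whence $d(x,p_j) < 2\varepsilon$. This gives the two inclusions $\beta^n(p_j,\varepsilon/2) \subseteq \beta^n(x, 5\varepsilon/2)$ for each $j \in J$, and conversely $\beta^n(x,5\varepsilon/2) \subseteq \beta^n(p_i, 9\varepsilon/2)$ for every $i \in J$.

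Next I would invoke condition (2) of the definition of a minimal $\varepsilon$-net: the balls $\beta^n(p_j,\varepsilon/2)$, $j \in J$, are pairwise disjoint and are all contained in $\beta^n(x,5\varepsilon/2)$. Choosing $i \in J$ to minimize $j \mapsto {\rm Vol}\,\beta^n(p_j,\varepsilon/2)$ and summing volumes using disjointness yields
$$N \cdot {\rm Vol}\,\beta^n(p_i,\varepsilon/2) \leq \sum_{j \in J} {\rm Vol}\,\beta^n(p_j,\varepsilon/2) \leq {\rm Vol}\,\beta^n(x,5\varepsilon/2) \leq {\rm Vol}\,\beta^n(p_i, 9\varepsilon/2),$$
so that
$$N \leq \frac{{\rm Vol}\,\beta^n(p_i, 9\varepsilon/2)}{{\rm Vol}\,\beta^n(p_i,\varepsilon/2)}.$$
The right-hand side is a ratio of volumes of concentric balls about the single point $p_i$, which is exactly what the lower Ricci bound controls.

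By the Bishop--Gromov relative volume comparison theorem (valid under ${\rm Ric}_M \geq (n-1)k$), this ratio is dominated by $V_k(9\varepsilon/2)/V_k(\varepsilon/2)$, the corresponding ratio for balls in the $n$-dimensional model space of constant curvature $k$. It then remains to bound this model ratio uniformly in $\varepsilon$. I would first note that we may assume $\varepsilon \leq D$ (otherwise a single ball covers $M^n$ and $N = n_0 = 1$), and then observe that $\varphi(r) := V_k(9r/2)/V_k(r/2)$ is continuous on $(0,D]$ with $\lim_{r \to 0^+}\varphi(r) = 9^n$ (and $\varphi \equiv 9^n$ when $k=0$, by scaling). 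Hence $n_2 := \sup_{0 < \varepsilon \leq D}\varphi(\varepsilon)$ is finite and depends only on $n$, $k$ and $D$, giving the asserted bound.

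The one point requiring care, and the step I expect to be the main obstacle, is precisely this $\varepsilon$-uniformity when $k \neq 0$: for $k < 0$ the model volume grows exponentially, so $\varphi$ is not scale invariant, while for $k > 0$ one must keep $9\varepsilon/2$ below the model diameter $\pi/\sqrt{k}$ (automatic here, since Myers' theorem already forces ${\rm diam}\,M^n$, and hence the relevant range of $\varepsilon$, to be small). In both cases the diameter bound $D$ confines $\varepsilon$ to a compact interval on which $\varphi$ is bounded, so the dependence of $n_2$ on $D$ is genuine and cannot be removed.
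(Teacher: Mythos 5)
Your proof is correct and follows essentially the same route as the paper: contain the pairwise-disjoint $\varepsilon/2$-balls in $\beta^n(x,5\varepsilon/2)\subseteq\beta^n(p_i,9\varepsilon/2)$ for a minimal-volume center $p_i$, apply Bishop--Gromov to the resulting ratio of concentric balls, and bound the model-space ratio uniformly for $\varepsilon\in(0,D]$ by continuity. Your handling of the $\varepsilon\to 0$ endpoint is in fact more careful than the paper's, which misstates the limit of the model ratio as $0$ when it is $9^n$, exactly as you observe.
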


\begin{lem}[\cite{GP}, Lemma 3.4]
Let $M_1^n, M_2^n$, be manifolds having the same bounds $k =k_1 = k_2$
and $D = D_1  = D_2$ (see above) and let $\{p_1,\ldots,p_{n_0}\}$ and
$\{q_1,\ldots,q_{n_0}\}$ be minimal $\varepsilon$-nets with the same
intersection pattern, on $M_1^n$, $M_2^n$, respectively. Then
there exists a constant $n_3 = n_3(n,k,D,C)$, such that if
$d(p_i,p_j) < C\cdot\varepsilon$, then $d(q_i,q_j) <
n_3\cdot\varepsilon$. %(Here $K$ denotes the upper bound of the sectional curvatures of $M_1^n$ and $M_2^n$.)
\end{lem}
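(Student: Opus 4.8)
The plan is to manufacture, from the hypothesis $d(p_i,p_j)<C\varepsilon$, a short \emph{chain} of pairwise intersecting net balls joining $p_i$ to $p_j$ whose length is bounded by a constant depending only on $n,k,D,C$, and then to transport this chain verbatim to $M_2^n$ using the common intersection pattern. Since $M_1^n$ is compact, I first fix a minimizing geodesic $\gamma\colon[0,L]\to M_1^n$ from $p_i$ to $p_j$, with $L=d(p_i,p_j)<C\varepsilon$. Let $\mathcal{B}$ be the collection of net balls $\beta^n(p_k,\varepsilon)$ meeting the image of $\gamma$; note $\beta^n(p_i,\varepsilon),\beta^n(p_j,\varepsilon)\in\mathcal{B}$, as they contain the endpoints. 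Form the intersection graph $G$ whose vertices are the balls of $\mathcal{B}$, two being joined by an edge when they meet. Because $\gamma$ is connected and the open balls of $\mathcal{B}$ cover it, $G$ is connected: a partition of its vertices into two edge-disjoint families would produce two disjoint, nonempty, relatively open subsets covering $\gamma([0,L])$, contradicting connectedness. Hence there is a simple path in $G$ from $\beta^n(p_i,\varepsilon)$ to $\beta^n(p_j,\varepsilon)$, yielding a chain $p_i=p_{k_0},p_{k_1},\dots,p_{k_m}=p_j$ with $\beta^n(p_{k_l},\varepsilon)\cap\beta^n(p_{k_{l+1}},\varepsilon)\neq\emptyset$ and $m\le|\mathcal{B}|-1$.

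Next I bound $|\mathcal{B}|$ independently of $\varepsilon$. Choose points along $\gamma$ spaced $\varepsilon$ apart, say $\gamma(0),\gamma(\varepsilon),\dots$, together with the endpoint; there are at most $\lceil C\rceil+1$ of them, and the balls $\beta^n(\gamma(l\varepsilon),\varepsilon)$ cover $\gamma$. If $\beta^n(p_k,\varepsilon)\in\mathcal{B}$ meets $\gamma$ at a point lying in some $\beta^n(\gamma(l\varepsilon),\varepsilon)$, then $\beta^n(p_k,\varepsilon)\cap\beta^n(\gamma(l\varepsilon),\varepsilon)\neq\emptyset$; so by the bounded-multiplicity estimate of the preceding lemma (applied at $x=\gamma(l\varepsilon)$, with its constant $n_2=n_2(n,k,D)$) at most $n_2$ net balls meet each $\beta^n(\gamma(l\varepsilon),\varepsilon)$. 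Therefore $|\mathcal{B}|\le(\lceil C\rceil+1)\,n_2$, and consequently $m\le(\lceil C\rceil+1)\,n_2$.

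Finally I transport the chain to $M_2^n$. Each consecutive pair satisfies $\beta^n(p_{k_l},\varepsilon)\cap\beta^n(p_{k_{l+1}},\varepsilon)\neq\emptyset$, so $(k_l,k_{l+1})$ lies in the intersection pattern of the net on $M_1^n$; since the two nets have the \emph{same} intersection pattern, $\beta^n(q_{k_l},\varepsilon)\cap\beta^n(q_{k_{l+1}},\varepsilon)\neq\emptyset$ on $M_2^n$, whence $d(q_{k_l},q_{k_{l+1}})<2\varepsilon$. Summing along the chain by the triangle inequality in $M_2^n$ gives
\[
d(q_i,q_j)\le\sum_{l=0}^{m-1}d(q_{k_l},q_{k_{l+1}})<2m\varepsilon\le 2(\lceil C\rceil+1)\,n_2\,\varepsilon,
\]
so the assertion holds with $n_3:=2(\lceil C\rceil+1)\,n_2(n,k,D)=n_3(n,k,D,C)$.

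The main obstacle is precisely the $\varepsilon$-independent bound on the number of net balls meeting $\gamma$: this is where the lower curvature bound enters, and it is exactly what the bounded-multiplicity lemma — itself a Bishop–Gromov volume-comparison statement — supplies. The remaining points, namely that the chosen balls genuinely cover $\gamma$ and that the graph-connectedness argument applies to this open cover, are routine and I would verify them only briefly.
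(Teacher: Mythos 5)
Your proof is correct and follows essentially the same route as the paper's (and Grove--Petersen's original): an $\varepsilon$-independent Bishop--Gromov count of the net balls near $p_i$, a chain of pairwise intersecting $\varepsilon$-balls joining $p_i$ to $p_j$, transport of that chain via the common intersection pattern, and the triangle inequality in $M_2^n$, yielding $n_3$ of the form $2\times(\text{count})$. The only real difference is bookkeeping: the paper gets the count by packing disjoint $\varepsilon/2$-balls into the single ball $B\big(p_i,(C+\tfrac{1}{2})\varepsilon\big)$ with one volume comparison, whereas you cover the geodesic by $\lceil C\rceil+1$ balls of radius $\varepsilon$ and apply the multiplicity bound $n_2(n,k,D)$ of the preceding lemma at each center --- and your write-up makes explicit the graph-connectedness/chaining step that the paper compresses into ``it follows that.''
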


This properties %clearly(?)
suffice to provide us with a simple and efficient, even if crude, triangulation method of closed, connected
Riemannian manifolds. Indeed, by using the properties above, one can
construct a simplicial complex
 having as vertices the centers of the balls
 $\beta^n(p_k,\varepsilon)$., as follows: Edges are connecting the centers of
 adjacent balls; further edges being added to ensure the cell
 complex obtained is triangulated to obtain a simplicial complex.
One can ensure that the triangulation will be convex and that its
simplices are convex, by choosing $\varepsilon = {\rm
ConvRad}(M^n)$, where the {\it convexity radius} ${\rm
ConvRad}(M^n)$ is defined as follows:

\begin{defn}
Let $M^n$ be a Riemannian manifold. The {\it convexity radius} of
$M^n$ is defined as $\inf\{r>0\,|\, \beta^n(x,r) \; {\rm is\;
convex},\; {\rm for\; all\;} x \in M^n\}$.
\end{defn}

This follows from the fact that $\beta^n\left(x,{\rm
ConvRad}(M^n)\right) \subset \beta^n\left(x,{\rm
InjRad}(M^n)\right)\,$, (since ${\rm ConvRad}(M^n) \geq \frac{1}{2}{\rm
InjRad}(M^n)$ -- see, e.g.  \cite{Be}). Here ${\rm InjRad}(M^n)$
denotes the {\it injectivity radius}:

\begin{defn} Let $M^n$  be a Riemannian manifold. The {\em
injectivity radius} of $M^n$ is defined as: ${\rm InjRad}(M^n) =
\inf\{{\rm Inj}(x)\,|\,x \in M^n\}$, where ${\rm Inj}(x) = \\ \sup{\{{r\,|\,
{\rm exp}_{x}|_{\mathbb{B}^n(x,r)}\; {\rm is\;
a \; diffeomorphism}}\}}$.
\end{defn}

Note that by a classical result of Cheeger (see, e.g. \cite{Be}), there is a
universal positive lower bound for ${\rm InjRad}(M)$ in terms of
$k, D$ and $v$, where $v$ is the lower bound for the volume
of $M$. It is precisely this result (and similar ones -- see also
the discussion below) that make the triangulation exposed above a
simple and practical one, at least in many cases.

\begin{rem}

Lemmas 2.2 - 2.4 above represent part of the tools\footnote{A notable part of the other tools being represented by a generalization of Cheeger's ``butterfly'' construction \cite{Chg} (see also \cite{Fuk}).}
 employed in the proof of
%To strengthen the result above, one has to show that two manifolds (of the same dimension) satisfying the same curvature and diameter (and/or volume) bounds %\marginpar{\tiny \bf Of the same type - expand!}
%and having minimal nets with the same intersection pattern are actually homeomorphic.
 main theorem of \cite{GP}, namely:

\begin{thm}[\cite{GP}, Theorem 4.1] \label{thm:GP-Main}
Let $M_1^n, M_2^n$ be two manifolds having the same upper diameter bound $D$, as well as the same lower bounds $k$ and $v$, on their curvatures and volumes, respectively. Then there exists $\varepsilon = \varepsilon(n,k,D,v)$ such that, if $M_1$ and $M_2$ have minimal packings with identical intersection patterns, then they are homotopy equivalent.
\end{thm}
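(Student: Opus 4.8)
The plan is to produce an explicit homotopy equivalence $f\colon M_1^n \to M_2^n$ together with a homotopy inverse $g\colon M_2^n \to M_1^n$, both built from the packing data, and to let Lemmas 2.2--2.4 do the bookkeeping. First I would fix the scale. By Cheeger's theorem ${\rm InjRad}(M_i^n)$ is bounded below by a positive constant depending only on $k,D,v$, and since ${\rm ConvRad}(M_i^n)\geq \tfrac12{\rm InjRad}(M_i^n)$ one may choose $\varepsilon=\varepsilon(n,k,D,v)$ so small that every metric ball of radius $\leq n_3\varepsilon$ is geodesically convex, where $n_3=n_3(n,k,D,2)$ is the constant furnished by Lemma 2.4 for the choice $C=2$. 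Geodesic convexity is the property I will use repeatedly: a convex ball is contractible, an intersection of convex balls is again convex (hence contractible when nonempty), and inside such a ball the Riemannian center of mass of finitely many points is well defined and unique.

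Next I would choose a partition of unity $\{\phi_k\}$ on $M_1^n$ subordinate to the cover $\{\beta^n(p_k,\varepsilon)\}$ and define $f(x)$ to be the center of mass of the points $\{q_k\}$ weighted by $\{\phi_k(x)\}$. The point of the construction is that it is well defined: if $x\in\beta^n(p_k,\varepsilon)\cap\beta^n(p_l,\varepsilon)$ then $d(p_k,p_l)<2\varepsilon$ by the triangle inequality, so for the \emph{active} indices $I(x)=\{k\mid \phi_k(x)>0\}$ the balls $\beta^n(p_k,\varepsilon)$, $k\in I(x)$, pairwise intersect; since $M_1^n$ and $M_2^n$ carry minimal $\varepsilon$-nets with \emph{identical} intersection patterns, Lemma 2.4 then forces $d(q_k,q_l)<n_3\varepsilon$ for all $k,l\in I(x)$. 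Thus the supporting points $\{q_k\}_{k\in I(x)}$ all lie in a single convex ball of radius $\leq n_3\varepsilon$, where their weighted center of mass exists and is unique; Lemma 2.3 bounds $|I(x)|\leq n_2$, so only boundedly many terms are active near any point and $f$ is continuous. A symmetric construction, using a partition of unity on $M_2^n$ subordinate to $\{\beta^n(q_k,\varepsilon)\}$ and centering the $\{p_k\}$, yields $g\colon M_2^n\to M_1^n$.

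It then remains to verify that $g\circ f\simeq \mathrm{id}_{M_1^n}$ and $f\circ g\simeq \mathrm{id}_{M_2^n}$. The mechanism is the geodesic (``straight-line'') homotopy: I would show that $g(f(x))$ lands in a convex ball of controlled radius about $x$, and then contract along the unique minimizing geodesic from $x$ to $g(f(x))$, which stays inside that convex ball and varies continuously. To estimate $d(x,g(f(x)))$ one tracks which nets are active: $f(x)$ is a barycenter of $q_k$'s indexed by $I(x)$, hence lies within $n_3\varepsilon$ of each such $q_k$; feeding $f(x)$ back through $g$ activates net points $p_l$ whose balls meet those $q_k$'s, and the identical intersection pattern together with Lemma 2.4 keeps all these $p_l$ within a convex ball about $x$. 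Everything is kept finite and uniform by Lemmas 2.2 and 2.3.

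The step I expect to be the main obstacle is exactly this last compatibility estimate, namely guaranteeing that a single convex ball simultaneously contains $x$, the active centers on both sides, and all the intermediate geodesics used in the homotopy. This is where the choice of $\varepsilon=\varepsilon(n,k,D,v)$ and the quasi-isometric distance comparison of Lemma 2.4 are indispensable: they convert the purely combinatorial hypothesis ``same intersection pattern'' into the metric control needed to make the barycentric maps and their homotopies land where they must. (Conceptually this is the same phenomenon as the Nerve Lemma: for the chosen $\varepsilon$ the covers $\{\beta^n(p_k,\varepsilon)\}$ and $\{\beta^n(q_k,\varepsilon)\}$ are good covers, so each $M_i^n$ is homotopy equivalent to the nerve of its packing, and identical intersection patterns make the two nerves combinatorially isomorphic; the explicit maps $f,g$ are a metric realization of that identification.)
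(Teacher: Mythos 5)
Your proposal founders at its very first step, and it is the step on which everything else rests. Under the hypotheses of Theorem \ref{thm:GP-Main} the sectional curvature is only bounded \emph{below}; Cheeger's injectivity-radius estimate requires a two-sided bound $|K|\le\Lambda$ (the upper bound is what controls the conjugate radius, and the lower bound plus volume controls closed geodesics). In the class $K\ge k$, ${\rm diam}\le D$, ${\rm Vol}\ge v$ there is \emph{no} uniform lower bound on the injectivity radius, hence none on the convexity radius. A concrete obstruction: take the boundary of the convex hull of a unit ball and a point at distance $3$, smoothed at scale $\delta$. This surface has $K\ge 0$, area and diameter uniformly controlled, yet points near the smoothed tip carry geodesic loops of length $O(\delta)$ (the cone angle there is less than $\pi$), so ${\rm InjRad}\to 0$ as $\delta\to 0$. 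Consequently no choice of $\varepsilon=\varepsilon(n,k,D,v)$ makes all balls of radius $n_3\varepsilon$ geodesically convex, and with that the well-definedness of your center-of-mass maps $f,g$, the uniqueness of the connecting geodesics in the straight-line homotopies, and the ``good cover''/nerve identification all collapse simultaneously. What you have written is essentially the classical proof of Cheeger's finiteness theorem, which is correct in the two-sided curvature class $|K|\le\Lambda$, ${\rm diam}\le D$, ${\rm Vol}\ge v$, but it does not reach the Grove--Petersen statement: the entire point of their theorem is to dispense with the upper curvature bound.

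For comparison: the paper itself does not prove this theorem at all -- it quotes it from \cite{GP}, and the remark containing it explicitly warns that Lemmas 2.2--2.4 are only \emph{part} of the tools, the other notable part being a generalization of Cheeger's ``butterfly'' construction \cite{Chg}. That generalization is exactly the replacement for the convexity you assumed: Grove and Petersen prove, via Toponogov comparison played against the volume lower bound, that there is $\varepsilon_0=\varepsilon_0(n,k,D,v)$ such that for every $p$ the distance function $d_p$ has no critical points in $B(p,\varepsilon_0)\setminus\{p\}$, whence small metric balls are contractible (uniform local contractibility) even though they need not be convex or embedded. Note that even granting this, your nerve-lemma shortcut remains unavailable, because intersections of contractible balls need not be contractible; instead of barycentric maps, Grove and Petersen construct the maps $M_1^n\to M_2^n$, $M_2^n\to M_1^n$ and the homotopies to the identities by induction over the skeleta of the complex determined by the intersection pattern, using the contractibility function to extend over each cell. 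Your argument would become a clean and correct substitute for that induction only under an added hypothesis restoring convexity, such as $|K|\le\Lambda$ -- that is, in Cheeger's setting, not Grove--Petersen's.
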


Unfortunately, the condition regarding the sectional curvature bound cannot be replaced easily by a similar one regarding Ricci curvature, not even in the classical Riemannian case,\footnote{ See Berger \cite{Be} for a brief discussion on the results employing Ricci curvature bounds.} and evidently not in the more general setting adopted in this paper.
\end{rem}

We bring, for reference, the proofs of the Lemmas above, and we do this almost verbatim:

\begin{proof}[Proof of Lemma 2.2]
Let $\{p_1,\ldots,p_{n_0}\}$ be a minimal $\varepsilon$-net on $M^n$ and let $\tilde{p}$ be a point in $\widetilde{M}^n_k$ -- the $k$-space form. Then, by the classical Bishop-Gromov Theorem\footnote{It basically states that the volume of balls in a complete Riemannian manifold $(M^n,g)$ satisfying
${\rm Ric} \geq(n-1)k$, does not increase faster than the volume of balls in the model space form, more precisely that, for any $x \in M = M^n$, the function

\[\varphi(r) = \frac{{\rm Vol}B(x,r)}{\int_0^rS_K^n(t)dt}\,,\]
is nonincreasing (as function of $r$), where

\[
S_K^n(r) = \left\{
                  \begin{array}{ll}
                    \Big(\sin{\sqrt{\frac{K}{n-1}}r}\Big)^{n-1} & \mbox{if $K > 0$}\\\\
                    r^{n-1} & \mbox{if $K = 0$}\\\\
                     \Big(\sinh{\sqrt{\frac{|K|}{n-1}}r}\Big)^{n-1} & \mbox{if $K < 0$}
                  \end{array}
           \right.
\]

}

\[\frac{{\rm Vol}B(p,r)}{{\rm Vol}B(p,R)} \geq \frac{{\rm Vol}B(\tilde{p},r)}{{\rm Vol}B(\tilde{p},R)}\,, 0 < r < R\,;\]
for any $p \in M^n$.

Let $i_0$ such that ${\rm Vol}B(p_{i_0},\varepsilon/2)$ is minimal. By 2.1.(2) it follows that

\[n_0 \leq \frac{{\rm Vol}M^n}{{\rm Vol}B(p_{i_0},\varepsilon/2)} \leq \frac{{\rm Vol}B(\tilde{p},D)}{{\rm Vol}B(\tilde{p},\varepsilon/2)}\,.\]
(To obtain the last inequality, just take, in Bishop-Gromov Theorem, $R = {\rm diam}M^n \leq D$.)

The desired conclusion now follows by taking

\[n_1 = \left[\frac{{\rm Vol}B(\tilde{p},D)}{{\rm Vol}B(\tilde{p},\varepsilon/2)}\right]\,.\]
\end{proof}

\begin{proof}[Proof of Lemma 2.3]
Let $j_1,...,j_s$ be such that $B(x,\varepsilon) \cap B(p_{j_i},\varepsilon) \neq \emptyset$. Then $B(p_{j_i},\varepsilon/2) \subset B(x,5\varepsilon/2)$.

Let $k \in \{1,..., s\}$ be such that $B(p_{j_k},\varepsilon/2)$ has minimal volume. Then (as in the proof of Lemma 2.2) it follows that:

\[s \leq \frac{{\rm Vol}B(x,5\varepsilon/2)}{{\rm Vol}B(p_k,\varepsilon/2)} \leq \frac{{\rm Vol}B(p_{j_k},9\varepsilon/2)}{{\rm Vol}B(p_{j_k},\varepsilon/2)} \leq \frac{{\rm Vol}B(\tilde{p},9\varepsilon/2)}{{\rm Vol}B(\tilde{p},\varepsilon/2)}\,,\]
where $\tilde{p}$ is as in the proof of the previous lemma. But %Thence, by the basic volume comparison criterion, we have

\[\frac{{\rm Vol}B(\tilde{p},9\varepsilon/2)}{{\rm Vol}B(\tilde{p},\varepsilon/2)} = \frac{\int_{0}^{9\varepsilon/2}S_K^n(r)dr}{\int_{0}^{\varepsilon/2}S_K^n(r)dr}\,,\]
and the function
\[h(\varepsilon) = \frac{\int_{0}^{9\varepsilon/2}S_K^n(r)dr}{\int_{0}^{\varepsilon/2}S_K^n(r)dr}\]
extends to a continuous function $\tilde{h}:[0,D] \rightarrow \mathbb{R}_+$,
(since $h(\varepsilon)  \rightarrow 0$ when $\varepsilon \rightarrow 0$).

\end{proof}

\begin{rem}
It is important to note that $n_2$ is independent of $\varepsilon$.
\end{rem}

\begin{proof}[Proof of Lemma 2.4]
 Evidently, since $d(p_i,p_j) < C \cdot \varepsilon$, it follows that $p_j \in B(p_i,C \cdot \varepsilon)$. Thus, precisely as in the proof of the previous lemma, it follows that there exists $n' = n'(C)$,

\begin{displaymath}
n'(C) = \max{\frac{\int_{0}^{(4k+1)\varepsilon/2}S_K^n(r)dr}{\int_{0}^{\varepsilon/2}S_K^n(r)dr}}\,,
\end{displaymath}
such that at most $n'$ of the balls $B(p_1,\varepsilon/2),...,B(p_{n'},\varepsilon/2)$ are included in $B(p_i,(C+\frac{1}{2})\varepsilon/2$.

Since $\{p_1,\ldots,p_{n_0}\}$ and
$\{q_1,\ldots,q_{n_0}\}$ have the same intersection pattern, it follows that $d(q_i,q_j) \leq n_3(C)$, where $n_3(C) = 2[n'(C)-1]$.

\end{proof}

%While we bring also the proof (or, at least, the main features(the outline of the proof) of Theorem \ref{thm:GP-Main}, we can only explain ideas and geometric significance behind the other results (besides the lemmas above) that are used in its proof (and refer the interested reader to the original paper \cite{GP}).
%
%
%\begin{skprf} {\bf of Theorem \ref{thm:GP-Main}}
%..................
%
%uses a generalized version of the original Cheeger ``butterfly argument''(?) (see \cite{Chg})
%\end{skprf}

%-----------------------------------------------------------------------

\section{Ricci curvature of metric measure spaces}

We bring here the definitions and results necessary in the following section. However, since proofs are not needed, we omit them and send to the source (usually \cite{Vi}).\footnote{However, the reader should be aware that, while we mostly refer to \cite{Vi} for convenience, many (if not all) the results therein appeared first in \cite{LV} and/or \cite{St}.}

We begin with the basic (and motivating)  %original?? first?...)
generalization, namely:

\subsection{Riemannian manifolds}

Let $M = M^n$ be a complete, connected $n$-dimensional Riemannian manifold.

One wishes to extend results regarding Ricci curvature to the case when $M^n$ is equipped with a measure that is not $d{\rm Vol}$. %A typical
Usually (at least in our context) such a measure is taken to be of the form

\[\nu(dx) = e^{-V(x)}{\rm Vol}(dx)\,,\]
where $V:M^n \rightarrow \mathbb{R}$, $V \in \mathcal{C}^2(\mathbb{R})$. Note also that any smooth positive probability measure can be written in this manner. Then $(M,d, \nu)$, where $d$ is the geodesic distance, is a metric measure space.

\begin{rem}
A standard measure $\nu$, in the context of Image Processing (but not only) would be the gaussian measure on $\mathbb{R}^n$:

\[\gamma^{(n)} = \frac{e^{-|x|^2}dx}{(2\pi)^{n/2}}\,.\]

However, more realistic measures can (and, indeed, should) be considered for imaging purposes -- see \cite{HM}, \cite{HLM}.
\end{rem}

To %regain
preserve geometric significance of the Ricci tensor, one has to modify its definition as follows:

\begin{equation} \label{eq:Ric-N-Nu}
{\rm Ric}_{N,\nu} = {\rm Ric} + \nabla^2V - \frac{\nabla V \otimes
\nabla V}{N - n}
\end{equation}

Here $\nabla V \otimes \nabla V$ is a quadratic form on $TM^n$, and $\nabla^2V$ is the Hessian matrix ${\rm Hess}$, defined as:

\[(\nabla V \otimes \nabla V)_x(v) = (\nabla V(x) \cdot v)^2\,.\]

Therefore

\[{\rm Ric}_{N,\nu}(\dot{\gamma}) = ({\rm Ric} + \nabla^2V)(\dot{\gamma}) - \frac{(\nabla V \cdot \dot{\gamma})^2}{N - n}\,.\]

Here $N$ is the so called {\em effective dimension} and is to be inputed.

\begin{rem}
\begin{enumerate}
\item If $N < n$ then ${\rm Ric}_{N,\nu} = -\infty$
\item If $N = n$ then, by convention, $0 \times \infty = 0$, therefore (\ref{eq:Ric-N-Nu}) is still defined even if $\nabla V = 0$, in particular
${\rm Ric}_{n,{\rm Vol}} = {\rm Ric}$ (since, in this case $V \equiv 0$).
\item If $N = \infty$ then ${\rm Ric}_{\infty,\nu} = {\rm Ric} + \nabla^2V$.
\end{enumerate}
\end{rem}

%----------------------

The Ricci curvature boundedness condition of the classical Bishop-Gromov is paralleled in the case of smooth metric measure spaces by the following (rather obvious)

\begin{defn}
$(M,d, \nu)$ satisfies the {\it curvature-dimension} estimate ${\rm CD}(K,N)$ iff there exist $K \in \mathbb{R}$ and $N \in [1,\infty]$, such that  ${\rm Ric}_{N,\nu} \geq K$ and $n \leq N$. (If $\nu = d{\rm Vol}$, then the first condition reduces to the classical ${\rm Ric} \geq K$.)
\end{defn}

\begin{rem}
Intuitively, ``$M$ has dimension $n$ but pretends to have dimension $N$. (Identity theft)''\footnote{J. Lott \cite{L}.}

The need for such a parametric dimension stems, in particular, % (for instance)
from the desire %need (or urge)
to extend the Bishop-Gromov Theorem to metric spaces (or more precisely, to length spaces), for which no innate notion of dimension exists.
\end{rem}

\begin{rem}
For a number of equivalent conditions, see \cite{Vi}, Theorem 14.8.
\end{rem}

\begin{thm}[Generalized Bishop-Gromov Inequality] \label{thm:BG+}
Let $M$ be a Riemannian manifold equipped with a reference measure $\nu = e^{-V}{\rm Vol}$ and satisfying a curvature-dimension condition $CD(K,N)$, $K \in \mathbb{R}, 1 < N < \infty$.

Then, for any $x \in M$, the function

\[\varphi(r) = \frac{\nu\left[B(x,r)\right]}{\int_0^rS_K^N(t)dt}\,,\]
is nonincreasing (as function of $r$), where

\[
S_K^N(t) = \left\{
                  \begin{array}{ll}
                    \Big(\sin{\sqrt{\frac{K}{N-1}}t}\Big)^{N-1} & \mbox{if $K > 0$}\\\\
                    t^{N-1} & \mbox{if $K = 0$}\\\\
                     \Big(\sinh{\sqrt{\frac{|K|}{N-1}}t}\Big)^{N-1} & \mbox{if $K < 0$}
                  \end{array}
           \right.
\]

\end{thm}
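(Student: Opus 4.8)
The plan is to reduce the global monotonicity of $\varphi$ to a pointwise differential comparison along radial geodesics, exactly as in the unweighted Bishop--Gromov theorem, but with the lower bound ${\rm Ric} \geq (n-1)k$ replaced by the effective bound ${\rm Ric}_{N,\nu} \geq K$ and the model density $r^{n-1}$ replaced by $S_K^N(r)$. First I would fix $x \in M$ and work in geodesic polar coordinates $(r,\theta) \in (0,\infty) \times S^{n-1}$ centered at $x$. Writing $d{\rm Vol} = \mathcal{A}(r,\theta)\,dr\,d\theta$ for the Riemannian Jacobian of $\exp_x$ and setting $\mathcal{A}_\nu(r,\theta) = e^{-V(\exp_x(r\theta))}\mathcal{A}(r,\theta)$ (extended by $0$ past the cut locus in the direction $\theta$), one has
\[
\nu\!\left[B(x,r)\right] = \int_{S^{n-1}}\!\int_0^r \mathcal{A}_\nu(t,\theta)\,dt\,d\theta .
\]
The crux is the pointwise claim that, for each fixed $\theta$, the ratio $\mathcal{A}_\nu(r,\theta)/S_K^N(r)$ is nonincreasing in $r$. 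Denoting by $H(r) = \partial_r \log \mathcal{A}(r,\theta)$ the mean curvature of the geodesic sphere and by $\dot V = \frac{d}{dr}V(\gamma_\theta(r))$ the radial derivative of the potential, the weighted mean curvature is $H_\nu = \partial_r \log \mathcal{A}_\nu = H - \dot V$, and it suffices to prove $H_\nu(r) \leq \partial_r \log S_K^N(r)$.

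Second, I would establish the Riccati inequality for $H_\nu$. The unweighted matrix Riccati equation for the shape operator of geodesic spheres, together with Cauchy--Schwarz on its trace, gives $H' + \frac{H^2}{n-1} \leq -{\rm Ric}(\dot\gamma)$. Differentiating $H_\nu = H - \dot V$, using $\ddot V = \nabla^2 V(\dot\gamma,\dot\gamma)$ along the geodesic, and invoking the definition (\ref{eq:Ric-N-Nu}) yields
\[
H_\nu' \leq -\frac{H^2}{n-1} - \frac{\dot V^2}{N-n} - {\rm Ric}_{N,\nu}(\dot\gamma).
\]
The elementary inequality $\frac{(a+b)^2}{p+q} \leq \frac{a^2}{p} + \frac{b^2}{q}$, applied with $a = H$, $b = -\dot V$, $p = n-1$, $q = N-n$ (so $p+q = N-1$), absorbs the first two terms into $-\frac{H_\nu^2}{N-1}$, and the hypothesis ${\rm Ric}_{N,\nu} \geq K$ turns the last term into $-K$; hence
\[
H_\nu' + \frac{H_\nu^2}{N-1} \leq -K,
\]
which is precisely the Riccati equation solved with equality by $\partial_r \log S_K^N$. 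Comparing initial asymptotics as $r \to 0^+$ (both logarithmic derivatives blow up like a constant over $r$, with $H_\nu$'s leading coefficient $n-1$ not exceeding the model's $N-1$ since $n \leq N$) and invoking the standard Riccati comparison lemma gives $H_\nu \leq \partial_r \log S_K^N$, which is the desired pointwise monotonicity. The cut locus is handled as usual: once $\mathcal{A}_\nu(\cdot,\theta)$ drops to $0$ it stays $0$, which only reinforces the monotonicity.

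Finally, I would integrate twice. Fixing $r_1 < r_2$, the pointwise inequality rewrites as $\mathcal{A}_\nu(r_2,\theta)\,S_K^N(r_1) \leq \mathcal{A}_\nu(r_1,\theta)\,S_K^N(r_2)$, which integrates over $\theta$ to give the same monotonicity for the weighted sphere area $a(r) = \int_{S^{n-1}}\mathcal{A}_\nu(r,\theta)\,d\theta$. A second, purely one-dimensional, lemma then promotes monotonicity of $a(r)/S_K^N(r)$ to monotonicity of the ratio of antiderivatives: writing the numerator of $\varphi'(r)$ as $\int_0^r S_K^N(r)S_K^N(t)\big[\tfrac{a(r)}{S_K^N(r)} - \tfrac{a(t)}{S_K^N(t)}\big]\,dt \leq 0$ (each bracket is nonpositive for $t<r$) shows $\varphi$ is nonincreasing, completing the proof.

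I expect the main obstacle to be the second step, namely obtaining the Riccati inequality with the \emph{correct} effective dimension $N-1$: the Cauchy--Schwarz splitting that trades one unit of the ambient dimension against the potential term is exactly the algebraic mechanism that makes the effective Ricci tensor (\ref{eq:Ric-N-Nu}) the right object, and verifying it cleanly (along with the $r\to 0^+$ initial data for the comparison) is where the real content lies; the two integration steps are routine. Alternatively, one could deduce the same conclusion from the displacement-convexity characterization of ${\rm CD}(K,N)$ in the optimal-transport framework of Lott--Villani--Sturm, but the Jacobian/Riccati route is more self-contained in the smooth setting considered here.
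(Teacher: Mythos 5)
Your proposal is correct, but it is worth noting that the paper does not actually prove this theorem at all: its ``proof'' is the single line ``See \cite{Vi}, p.~499--500,'' deferring entirely to Villani, where the inequality is obtained inside the optimal-transport framework (concavity estimates for Jacobian determinants along Wasserstein geodesics, phrased via the distortion coefficients $\beta^{(K,N)}_t$). Your route is instead the classical Bakry--\'Emery/Heintze--Karcher argument: polar coordinates, the weighted mean curvature $H_\nu = H - \dot V$, the traced Riccati inequality, and the key algebraic step $\frac{(H-\dot V)^2}{N-1} \leq \frac{H^2}{n-1} + \frac{\dot V^2}{N-n}$, which is exactly the mechanism for which the modified tensor ${\rm Ric}_{N,\nu}$ is designed; all the steps you outline (the Riccati comparison with $\partial_r \log S_K^N$, the $r\to 0^+$ asymptotics using $n \leq N$, the extension by zero past the cut locus, and the two integration lemmas) are standard and correct. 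The only loose ends are cosmetic: when $N = n$ the term $\dot V^2/(N-n)$ is ill-defined, but then the paper's convention forces $\nabla V \equiv 0$ and the argument degenerates to the unweighted case, and your Cauchy--Schwarz step needs $N > n$ stated explicitly. What each approach buys: yours is self-contained and elementary in the smooth setting, which is the setting of this theorem; the route through Villani has the advantage that essentially the same argument establishes Theorem \ref{thm:BG++} for weak ${\rm CD}(K,N)$ spaces, where no exponential map, cut locus, or Riccati equation is available --- and that nonsmooth version is what the paper actually relies on in Sections 4.2 and 5.
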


\begin{proof}
See \cite{Vi}, p. 499-500.
\end{proof}

%\marginpar{\tiny \bf Add the explanation for $N$ as a natural for a Bishop-Gromov Theorem to hold!... \\-------------------------------}

%\marginpar{\bf \tiny SEE IF YOU CAN ADD THE CASE $N = \infty$!!! (Villani, p.502 - What does the formula say in this case?!...) \\-----------------------------}

%-------------------------------------------------------------------

\subsection{Metric Measure Spaces}

To further extend the Bishop-Gromov Theorem, we have first to introduce a proper generalization of "Ricci curvature bounded from below". For this, quite a number of preparatory definitions are needed, that we briefly review here.

\begin{defn}
Let $(X,\mu)$ and $(Y,\nu)$ be two measure spaces. A {\it coupling} (or {\it transference (transport) plan}) of $\mu$ and $\nu$ is a measure $\pi$ on $X \times Y$ with {\it marginals} $\mu$ and $\nu$ (on $X$ and $Y$, respectively), i.e. such that, %the projection maps ... satisfy the %marginal conditions: $pr_X$
for all measurable sets $A \subset X$ and $B \subset Y$, the following hold: $\pi[A \times Y] = \mu[A]$ and $\pi[X \times B] = \nu[B]$.
\end{defn}

\begin{defn}
Let $(X,\mu)$ and $(Y,\nu)$ be as above and let $c = c(x,y)$ be a (positive) cost function on $X \times Y$. Consider the {\it Monge-Kantorovich minimization problem}:

\begin{equation}
\inf\int_{X \times Y}c(x,y)d\pi(x,y)\,,
\end{equation}
where the infimum is taken over all the transport plans. The transport plans attaining the infimum are called {\it optimal transport (transference) plans}.
\end{defn}

Before we can proceed, we must recall the following definition and facts:

\begin{defn}
Let $(X,d)$ be a Polish space, and let $P(X)$ denote the set of Borel probability measures on $X$.
Then the {\it Wasserstein distance} (of order $2$) on $P(X)$ is defined as

\begin{equation}
W_2(\mu,\nu) = \left(\inf\int_X{d((x,y)^2d\pi(x,y)}\right)^\frac{1}{2}\,,
\end{equation}
where the infimum is taken over all the transference plans between $\mu$ and $\nu$.

\end{defn}

\begin{defn}
The {\it Wasserstein space} $P_2(X)$ is defined as

\begin{equation}
P_2(X) = \Big\{\mu \in P(X) \,\big|\, \int_X{d(x_0,x)^2\mu(dx)} < \infty \big\}\,,
\end{equation}
where $x_0 \in X$ is an arbitrary point.

The definition above does not depend upon the choice of $x_0$ and $W_2$ is a metric on $P_2(X)$.
\end{defn}

\begin{facts}
Let $(X,d)$ and $P_2(X)$ be as in the definition above. Then
\begin{enumerate}
\item
$P_2(X)$ is a Polish space.
\item
If $X$ is compact, then $P_2(X)$ is also compact.
% \marginpar{\tiny \bf This is not needed immediately.}
%%
%\item Any $\mu \in P(X)$ can be $W_2$ approximated by a sequence $\{\mu_j\}_{j \in \mathbb{N}}$, such that $\mu_j$ has finite support for any $j \in \mathbb{N}$.
%\marginpar{\tiny \bf Needed?}
\end{enumerate}
\end{facts}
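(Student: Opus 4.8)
The plan is to treat the two assertions separately, relying throughout on Prokhorov's theorem and on the standard characterization that $W_2(\mu_k,\mu)\to 0$ is equivalent to weak convergence $\mu_k\rightharpoonup\mu$ together with convergence of second moments $\int d(x_0,x)^2\,\mu_k(dx)\to\int d(x_0,x)^2\,\mu(dx)$. For the separability half of (1), I would fix a countable dense subset $\{x_i\}\subset X$ (which exists since $X$ is Polish) and let $\mathcal{D}$ be the countable family of measures $\sum_{i=1}^m q_i\delta_{x_i}$ with $q_i\in\mathbb{Q}_{\ge 0}$ and $\sum_i q_i=1$. Given $\mu\in P_2(X)$ and $\eps>0$, finiteness of the second moment lets me truncate: the mass lying outside a sufficiently large ball contributes at most $\eps$ to $W_2^2$, so $\mu$ is $W_2$-approximated by a compactly supported measure. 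Then I would cover the (now compact) support by finitely many small balls centred at points $x_i$ and transport each piece of mass to its centre; since every unit of mass moves by at most the ball radius, the resulting element of $\mathcal{D}$ is $W_2$-close to $\mu$, whence $\mathcal{D}$ is dense.

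For completeness, let $(\mu_k)$ be $W_2$-Cauchy. The first step is to establish tightness, and here the natural moment bound alone is not enough, because in a general Polish space closed balls need not be precompact. Instead I would use that each individual $\mu_k$ is Radon, hence tight: given $\eps$, choose $N$ with $W_2(\mu_k,\mu_N)<\eps$ for all $k\ge N$ and a compact set $K$ capturing all but $\delta$ of the mass of the finite family $\mu_1,\dots,\mu_N$; the Cauchy estimate then forces every $\mu_k$ to keep all but a controlled amount of mass near $K$, since relocating mass far from $K$ would cost too much in $W_2$. This yields uniform tightness, so Prokhorov produces a subsequence $\mu_{k_j}\rightharpoonup\mu$. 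Lower semicontinuity of the second moment under weak convergence, together with the uniform moment bound, gives $\mu\in P_2(X)$; lower semicontinuity of $W_2$ under weak convergence then shows $W_2(\mu_{k_j},\mu)\to 0$, and the usual argument that a Cauchy sequence with a convergent subsequence converges upgrades this to convergence of the full sequence.

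For the compactness claim (2), I would first note that when $X$ is compact the distance $d$ is bounded by $D=\mathrm{diam}(X)<\infty$, so every Borel probability measure has finite second moment and $P_2(X)=P(X)$ as sets. The key lemma is that on a bounded space weak convergence already implies $W_2$-convergence: if $\mu_k\rightharpoonup\mu$, then passing to optimal plans $\pi_k$ for $(\mu_k,\mu)$ and using tightness (automatic on compact $X$) to extract a weak limit $\pi$, lower semicontinuity and boundedness of the continuous cost $d^2$ force $W_2(\mu_k,\mu)\to 0$. Thus $W_2$ metrizes the weak topology on $P(X)$. Since $X$ is compact the whole family $P(X)$ is tight, so by Prokhorov it is weakly sequentially compact; combined with the lemma, any sequence in $P_2(X)$ has a $W_2$-convergent subsequence, which is precisely compactness of the metric space $(P_2(X),W_2)$.

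I expect the main obstacle to lie in the tightness step of the completeness proof. Unlike the compact setting of (2), where tightness is automatic and the argument collapses to the single metrization lemma, a $W_2$-Cauchy sequence in a general Polish space must be shown uniformly tight by hand, exploiting the transport cost to confine mass near a compact set rather than relying on any local compactness of $X$; the remaining semicontinuity and approximation steps are then routine.
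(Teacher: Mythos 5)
There is no internal proof to compare you against: the paper states these Facts without argument, since Section~3 explicitly declares that proofs are omitted and the reader is sent to the source, here \cite{Vi} (where this is Theorem~6.18). What you have reconstructed is, in substance, the standard proof from that source --- rational convex combinations of Dirac masses at a countable dense set for separability; tightness of a $W_2$-Cauchy sequence, Prokhorov, and lower semicontinuity for completeness; Prokhorov plus the fact that $W_2$ metrizes weak convergence on a bounded space for compactness --- so your route is the canonical one and the overall architecture is sound.

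Three details need repair. First, in the separability step you truncate $\mu$ to a large ball and then speak of its ``(now compact) support''; closed balls in a Polish space need not be compact, as you yourself correctly observe two paragraphs later. The standard fix is to use inner regularity of the single measure $\mu$ to produce a genuinely compact set $K$ with $\int_{X\setminus K} d(x_0,x)^2\,\mu(dx)<\varepsilon^2$, and to send the exterior mass to a fixed point. Second, in the tightness step of completeness, ``mass near $K$'' lives in a neighborhood of a compact set, which again need not be precompact; to get uniform tightness one must run the argument at every scale $2^{-j}$, obtaining a finite union of $2^{-j}$-balls carrying all but $2^{-j}\delta$ of the mass of every $\mu_k$, and then intersect the closures over $j$ to manufacture a single totally bounded, closed (hence compact) set. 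This multi-scale step is what your phrase ``this yields uniform tightness'' is silently carrying. Third, and most substantively, in your lemma for (2) the weak limit $\pi$ of the optimal plans $\pi_k$ is indeed a coupling of $(\mu,\mu)$, but an arbitrary coupling of $(\mu,\mu)$ can have strictly positive cost, so convergence of $\int d^2\,d\pi_k$ to $\int d^2\,d\pi$ does not by itself force $W_2(\mu_k,\mu)\to 0$; lower semicontinuity points in the wrong direction here. You need the stability theorem that weak limits of optimal plans are optimal (whence $\int d^2\,d\pi = W_2(\mu,\mu)^2 = 0$), or, more economically, the characterization you quoted in your opening sentence: on compact $X$ the function $d(x_0,\cdot)^2$ is bounded and continuous, so weak convergence already implies convergence of second moments, and the lemma follows with no optimal plans at all. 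None of these repairs changes your structure; they are exactly the points where the sketch, as written, does not yet close.
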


\begin{defn}
Let $(X,d)$ be a compact, geodesic, Polish space and let $\Gamma = \{\gamma:[0,1] \rightarrow X\,|\, \gamma {\rm \; a \; minimal \; geodesic}\}$, and denote by $e_t:\Gamma \rightarrow X$ the (continuous) {\em evaluation map}, $e_t:(\gamma) = \gamma(t)$. Let $E:\Gamma \rightarrow X \times X$ be defined as $E(\gamma) = (e_0(\gamma),e_1(\gamma))$. A {\it dynamical transference plan} is a pair $(\pi,\Pi)$, where $\pi$ is a transference plan and $\Pi$ is a Borel measure, such that $E_\#\Pi = \pi$. $(\pi,\Pi)$ is called {\it optimal} if $\pi$ is optimal.
\end{defn}

\begin{defn}
Let $\Pi$ be an optimal dynamical transference plan. Then the one-parameter family $\{\mu_t\}_{t \in [0,1]}, \mu_t = (e_t)_\#\Pi$ is called a {\it displacement interpolation}
\end{defn}

We can now quote the following result (\cite{Vi}, Theorem 7.21 and Corollary 7.22), connecting the geometry of the Wasserstein space to
% the (``standard'')
classical mass transport:

\begin{prop}
Any displacement interpolation is a Wasserstein geodesic, and conversely, any Wasserstein geodesic is obtained as a displacement interpolation from an optimal displacement interpolation.
\end{prop}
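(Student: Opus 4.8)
The plan is to establish the two implications separately: the forward direction (every displacement interpolation is a Wasserstein geodesic) is a short computation, whereas the converse (recovering an optimal dynamical plan from an abstract geodesic) is the substantial part and occupies most of the work.

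For the forward direction, let $(\pi,\Pi)$ be an optimal dynamical transference plan and set $\mu_t = (e_t)_\#\Pi$. First I would produce, for each pair $s,t \in [0,1]$, the candidate coupling $(e_s,e_t)_\#\Pi$ of $\mu_s$ and $\mu_t$. Since every $\gamma \in \Gamma$ is a constant-speed minimal geodesic, $d(\gamma(s),\gamma(t)) = |t-s|\,d(\gamma(0),\gamma(1))$, so that
\[
W_2(\mu_s,\mu_t)^2 \le \int_\Gamma d(\gamma(s),\gamma(t))^2\, d\Pi(\gamma) = |t-s|^2 \int_{X \times X} d(x,y)^2\, d\pi(x,y) = |t-s|^2\, W_2(\mu_0,\mu_1)^2,
\]
the last equality being the optimality of $\pi$ together with $E_\#\Pi = \pi$. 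The reverse inequality then comes for free from the triangle inequality for $W_2$: chaining $\mu_0 \to \mu_s \to \mu_t \to \mu_1$ yields $W_2(\mu_0,\mu_1) \le W_2(\mu_0,\mu_1)$, and since the two ends agree, every intermediate inequality must be an equality, which is precisely $W_2(\mu_s,\mu_t) = |t-s|\,W_2(\mu_0,\mu_1)$.

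For the converse, let $\{\mu_t\}$ be a Wasserstein geodesic. The engine is a dyadic midpoint argument. Given the midpoint $\mu_{1/2}$, I would glue an optimal plan between $\mu_0$ and $\mu_{1/2}$ to one between $\mu_{1/2}$ and $\mu_1$, obtaining a measure $\sigma$ on $X \times X \times X$. Using $W_2(\mu_0,\mu_{1/2}) = W_2(\mu_{1/2},\mu_1) = \tfrac12 W_2(\mu_0,\mu_1)$ and applying Minkowski's inequality to $d(x_0,x_{1/2}) + d(x_{1/2},x_1)$ under $\sigma$, I would note that the whole chain of inequalities $W_2(\mu_0,\mu_1)^2 \le \int d(x_0,x_1)^2 d\sigma \le \int (d(x_0,x_{1/2})+d(x_{1/2},x_1))^2 d\sigma \le W_2(\mu_0,\mu_1)^2$ collapses to equalities. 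The pointwise triangle inequality being saturated forces $d(x_0,x_1) = d(x_0,x_{1/2}) + d(x_{1/2},x_1)$, and the Minkowski equality case (equal $L^2$ norms) forces $d(x_0,x_{1/2}) = d(x_{1/2},x_1)$, for $\sigma$-a.e.\ triple; thus $x_{1/2}$ is a genuine midpoint of $x_0$ and $x_1$, and the $(x_0,x_1)$-marginal is optimal. Iterating over all dyadic times and gluing at each level yields a consistent family of measures on the finite products $X^{2^k+1}$, each concentrated on dyadic samplings of minimal geodesics.

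The main obstacle is the passage from this finite-dimensional family to a single Borel measure $\Pi$ on the path space $\Gamma$ with $(e_t)_\#\Pi = \mu_t$. Here I would exploit the hypotheses that $X$ is compact and geodesic: the relevant curves have speed bounded by $\operatorname{diam}X$, hence are equi-Lipschitz, so by Arzel\`a--Ascoli $\Gamma$ is compact in the topology of uniform convergence, and consequently the space of probability measures on $\Gamma$ is weakly compact (by the same principle as in the Facts quoted above). A tightness/Prokhorov argument then extracts a limit measure $\Pi$ from the interpolated dyadic measures; continuity of $t \mapsto \mu_t$ together with the midpoint property holding at every dyadic scale guarantees that $\Pi$ is carried by constant-speed minimal geodesics with the prescribed evaluations. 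Setting $\pi = E_\#\Pi$, the optimality established at the dyadic level survives in the limit, so $(\pi,\Pi)$ is an optimal dynamical transference plan whose displacement interpolation is $\{\mu_t\}$. The delicate points demanding care are the measurable selection of geodesics realizing the abstract midpoints and the verification that optimality is preserved under the weak limit.
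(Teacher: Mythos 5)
The paper does not actually prove this proposition: it is quoted directly from Villani's book (\cite{Vi}, Theorem 7.21 and Corollary 7.22), so there is no in-paper argument to compare yours against. What you have written is, in structure, precisely the standard proof from that cited source: the forward direction via the couplings $(e_s,e_t)_\#\Pi$, the constant-speed identity $d(\gamma(s),\gamma(t))=|t-s|\,d(\gamma(0),\gamma(1))$, and the triangle-inequality chain forcing equality; the converse via gluing optimal plans at the midpoint, collapsing the chain of inequalities through the equality case of Minkowski, iterating over dyadic times, and lifting to a measure on path space by an Arzel\`a--Ascoli/Prokhorov compactness argument. Your argument is sound, and the two points you flag as delicate --- the measurable selection of geodesics interpolating the dyadic samplings, and the persistence of optimality under the weak limit (which holds because $\pi\mapsto\int d(x,y)^2\,d\pi$ is weakly continuous when $X$ is compact) --- are exactly the technical steps the full proof in \cite{Vi} supplies; neither would fail in the compact geodesic setting assumed here.
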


\begin{defn}
Given $N \in [1,\infty]$, the {\it displacement convexity class} $\mathcal{DC}_N$ is defined as the set of convex, continuous functions $U:\mathbb{R}_+ \rightarrow \mathbb{R}$, $U \in \mathcal{C}^2(\mathbb{R}_+ \setminus \{0\})$, such that $U(0) = 0$ and such that

\[
\frac{rU'(r) - U(r)}{r^{1-1/N}}
\]
is nondecreasing (as a function of $r$).
%\begin{equation}
%%{\rm CD}(K,N) = \{\}
%\end{equation}
%
\end{defn}

\begin{rem}
For equivalent defining conditions for the class $\mathcal{DC}_N$ see \cite{Vi}, Definition 17.1.
\end{rem}

\begin{defn}
Let $(X,d, \nu)$ be a a locally compact metric measure space, such that the measure $\nu$ is locally finite, and let $U$ be a
continuous, convex function $U:\mathbb{R}_+ \rightarrow \mathbb{R}$, $U \in \mathcal{C}^2(\mathbb{R}_+ \setminus \{0\})$, such that $U(0) = 0$. Consider a measure  $\mu$ on $X$, having compact support, and let $\mu = \rho\nu + \mu_s$ be its Lebesgue decomposition into absolutely continuous and singular parts.
%Polish space and let $\nu$ be a measure on $X$. Then

Then we define the (integral) functional $U_\nu$ (with {\it nonlinearity} $U$ and {\it reference measure} $\nu$) by

\begin{equation}
U_\nu = \int_X U\big(\rho(x)\big)\nu(dx) + U'(\infty)\mu_s[X]\,.
\end{equation}

Moreover, if $\{\pi(dy|x)\}_{x \in X}$ is a family of probability measures on $X$ and if $\beta:U \times U \rightarrow (0,\infty]$ is a measurable function, we define an (integral) functional $U_{\pi,\nu}^\beta$ (with {\it nonlinearity} $U$, {\it reference measure} $\nu$, {\it coupling} $\pi$ and {\it distortion coefficient} $\beta$) by:

\begin{equation}
U_{\pi,\nu}^\beta = \int_{U \times U} U\left(\frac{\rho(x)}{\beta(x,y)}\right)\beta(x,y)\pi(dy|x)\nu(dx) + U'(\infty)\mu_s[X]\,.
\end{equation}

\end{defn}

Usually (e.g. in the definition of weak ${\rm CD}(K,N)$ spaces) $\beta$ is taken to be the {\it reference distortion coefficients}:

\begin{defn}
Let $x,y$ be two points in a metric space $(X,d)$, and consider the numbers $K \in $, $N \in [1,\infty]$ and  $t \in [0,1]$. We define the {\it reference distortion coefficients} $\beta^{(K,N)}_{t}(x,y)$ as follows:

\begin{enumerate}

\item If $t \in (0,1]$ and $1 < N < \infty$, then

\begin{equation}
\beta^{(K,N)}_{t}(x,y) = \left\{
                  \begin{array}{ll}
                    + \infty & \mbox{if $K > 0$ and $\alpha > \pi$}\,,\\\\
                    \Big(\frac{\sin{(t\alpha)}}{t\sin{\alpha}}\Big)^{N-1} & \mbox{if $K > 0$ and $\alpha \in [0,\pi]$}\,,\\\\
                    1 & \mbox{if $K = 0$}\,,\\\\
                     \Big(\frac{\sinh{(t\alpha)}}{t\sinh{\alpha}}\Big)^{N-1} & \mbox{if $K < 0$}\,;
                  \end{array}
           \right.
\end{equation}
where
\begin{equation}
\alpha = \sqrt{\frac{|K|}{N-1}}d(x,y)\,.
\end{equation}

\item In the limit cases $N \rightarrow 1$ and $N \rightarrow \infty$, define

\begin{equation}
\beta^{(K,1)}_{t}(x,y) = \left\{
                  \begin{array}{ll}
                   +\infty & \mbox{if $K > 0$}\,,\\\\
                   1 & \mbox{if $K \leq 0$}\,;
                  \end{array}
           \right.
\end{equation}

and

\begin{equation}
\beta^{(K,\infty)}_{t}(x,y) = e^{\frac{K}{6}(1-t^2)d(x,y)\,.}
\end{equation}

\item If $t = 0$, then

\begin{equation}
\beta^{(K,N)}_{0}(x,y) = 1\,.
\end{equation}

\end{enumerate}

\end{defn}

\begin{rem}
If $X$ is the model space for ${\rm CD}(K,N)$ (see \cite{Vi} p. 387), then $\beta^{(K,N)}$ is the distortion coefficient on $X$.
\end{rem}

%..........................................................................................................

We can now bring the definition we are interested in:

\begin{defn}
Let $(X,d,\nu)$ be a locally compact, complete, $\sigma$-finite metric measure geodesic space, and let $K \in \mathbb{R}, N \in [1,\infty]$.
We say that $(X,d,\nu)$ satisfies a {\em weak ${\rm CD}(K,N)$ condition} (or that it is a {\em weak ${\rm CD}(K,N)$ space}) iff for any two probability measures $\mu_0, \mu_1$ with compact supports ${\rm Supp}\,\mu_1, {\rm Supp}\,\mu_2 \subset {\rm Supp}\,\nu$, there exist a {\em displacement interpolation} ${\mu_t}_{0 \leq t \leq 1}$ and an associated optimal coupling $\pi$ of $\mu_0,\mu_1$ such that, for all $U \in \mathcal{DC}_N$, and for all $t \in [0,1]$, the following holds:

\begin{equation}
U_\nu(\mu_t) \leq (1-t)\,U_{\pi,\nu}^{\beta^{(K,N)}_{1-t}}(\mu_0) + t\,U_{\tilde{\pi},\nu}^{\beta^{(K,N)}_{t}}(\mu_1)
\end{equation}
(Here we denote ${\tilde{\pi}} = S_\#\pi$, where $S(x,y) = (y,x)$.)
\end{defn}

\begin{rem}
In fact, the geodesicity condition is somewhat superfluous, since a locally compact, complete metric space is geodesic (see, e.g. \cite{He}, 9.14).
\end{rem}

Fortunately, % (for us!...)
a version of the Bishop-Gromov Inequality also holds in general metric measure spaces; more precisely we have

\begin{thm}[Bishop-Gromov Inequality for Metric Measure Spaces] \label{thm:BG++}
Let $(X,d, \nu)$ be a weak ${\rm CD}(K,N)$ space, $N < \infty$, and let $x_0 \in {\rm Supp}\,\nu$.
Then, for any $r > 0$, $\nu[B(x_0),r] = \nu[B[x_0],r]$. Moreover,

\begin{equation}
\frac{\nu[B[x_0],r]}{\int_0^r{S_K^N(t)}dt}
\end{equation}
is a nonincreasing function of $r$.
\end{thm}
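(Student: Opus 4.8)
The plan is to deduce the statement directly from the defining inequality of a weak ${\rm CD}(K,N)$ space by feeding it two carefully chosen endpoint measures together with a single power-type nonlinearity. Concretely, I would fix $0 < r < R$, take $\mu_0 = \delta_{x_0}$ (the unit mass concentrated at the basepoint) and $\mu_1 = \nu[B(x_0,R)]^{-1}\,\nu|_{B(x_0,R)}$ (the normalised restriction of $\nu$ to the larger ball), and apply the inequality with the nonlinearity $U_N(s) = -s^{1-1/N}$. One first checks that $U_N \in \mathcal{DC}_N$: it is convex, satisfies $U_N(0) = 0$, and $\big(sU_N'(s) - U_N(s)\big)/s^{1-1/N} \equiv 1/N$ is (trivially) nondecreasing. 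The geometric content is that the optimal dynamical plan from a point source moves all the mass radially outward along geodesics issuing from $x_0$, so that the displacement interpolant $\mu_t$ is supported in the contracted closed ball $B[x_0,tR]$.

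With these choices the three terms of the defining inequality simplify drastically. The $\mu_0$-term vanishes: since $\nu$ carries no atom at $x_0$, the density of $\delta_{x_0}$ with respect to $\nu$ is zero, while $U_N(0) = 0$ and $U_N'(\infty) = \lim_{s\to\infty}U_N(s)/s = 0$ kill both the absolutely continuous and the singular contributions. For the interpolant term I would bound $U_\nu(\mu_t)$ from below by H\"older's inequality with exponents $N/(N-1)$ and $N$: since $\mu_t$ is a probability measure supported in $B[x_0,tR]$, one gets $U_\nu(\mu_t) = -\int \rho_t^{1-1/N}\,d\nu \ge -\nu\big[B[x_0,tR]\big]^{1/N}$. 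Finally, because the reversed coupling $\tilde\pi$ sends $\nu$-a.e.\ point of $B(x_0,R)$ back to $x_0$, the $\mu_1$-term computes explicitly to
\[
t\,U_{\tilde\pi,\nu}^{\beta^{(K,N)}_{t}}(\mu_1) = -\,t\,\nu[B(x_0,R)]^{-(1-1/N)}\int_{B(x_0,R)} \beta^{(K,N)}_{t}(x_0,x)^{1/N}\,\nu(dx)\,.
\]

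The bridge to the model comparison function is the identity
\[
\beta^{(K,N)}_{t}(x_0,x) = \frac{S_K^N\big(t\,d(x_0,x)\big)}{t^{\,N-1}\,S_K^N\big(d(x_0,x)\big)}\,,
\]
which is immediate from the two definitions. Combining the lower bound on $U_\nu(\mu_t)$ with the explicit $\mu_1$-term, substituting this identity, and setting $t = r/R$, one reorganises the resulting inequality into
\[
\frac{\nu[B[x_0,r]]}{\int_0^r S_K^N(s)\,ds}\ \ge\ \frac{\nu[B[x_0,R]]}{\int_0^R S_K^N(s)\,ds}\,,
\]
which is the asserted monotonicity. The equality $\nu[B(x_0,r)] = \nu[B[x_0,r]]$ then follows because the left- and right-continuous versions of the volume function both obey this comparison and agree off an at most countable set, forcing every sphere to be $\nu$-null.

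The step I expect to be the genuine obstacle is making rigorous the two assertions about $\mu_t$: that it is absolutely continuous with a well-defined density $\rho_t$, and that its support sits inside $B[x_0,tR]$ with the pointwise Jacobian (distortion) inequality integrating to the stated $\mu_1$-term. On a smooth weighted manifold this is the classical Jacobian estimate, but in a general weak ${\rm CD}(K,N)$ space it requires essential non-branching of geodesics and a measurable selection of the optimal dynamical plan, which are exactly the delicate structural inputs. A secondary difficulty is that $\nu$ admits no a priori radial disintegration, so the passage from the $\beta^{(K,N)}_{t}$-weighted integral over $B(x_0,R)$ to the one-dimensional integral $\int_0^r S_K^N(s)\,ds$ cannot be a direct computation; instead one argues by comparison, exploiting that equality holds in the model space and that the distortion coefficients are precisely the model Jacobians. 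For the fully general metric-measure statement I would, at this point, invoke the detailed arguments of Sturm and of Villani rather than reprove them.
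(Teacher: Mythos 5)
The paper itself gives no argument here: its ``proof'' is the citation to Villani's book (Theorem 30.11, p.~806), whose proof follows Sturm. Measured against that standard argument, the first half of your proposal is correct and is indeed how that proof begins: the choices $\mu_0=\delta_{x_0}$, $\mu_1=\nu[B[x_0,R]]^{-1}\nu|_{B[x_0,R]}$, $U_N(s)=-s^{1-1/N}\in\mathcal{DC}_N$, the vanishing of the $\mu_0$-term, the deterministic reversed coupling, the H\"older bound $U_\nu(\mu_t)\ge-\nu\bigl[B[x_0,tR]\bigr]^{1/N}$, and the identity $\beta^{(K,N)}_t(x_0,x)=S_K^N(td)/\bigl(t^{N-1}S_K^N(d)\bigr)$ are all right. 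Note, moreover, that the ``genuine obstacles'' you flag at the end are not obstacles at all: since $U_N'(\infty)=0$, a singular part of $\mu_t$ contributes nothing and H\"older applies to the absolutely continuous part alone, so neither absolute continuity of $\mu_t$ nor any non-branching hypothesis is needed.

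The genuine gap is the sentence ``setting $t=r/R$, one reorganises the resulting inequality into'' Bishop--Gromov. What your choices actually yield is
\[
\nu\bigl[B[x_0,tR]\bigr]^{1/N}\;\ge\; t^{1/N}\,\nu\bigl[B[x_0,R]\bigr]^{\frac{1}{N}-1}\int_{B[x_0,R]}\left(\frac{S_K^N\bigl(t\,d(x_0,x)\bigr)}{S_K^N\bigl(d(x_0,x)\bigr)}\right)^{1/N}\nu(dx)\,,
\]
and for $K\neq0$ this does \emph{not} rearrange into the theorem. For $K<0$ the map $d\mapsto S_K^N(td)/S_K^N(d)$ is decreasing, so the best pointwise bound gives only $\nu[B[x_0,tR]]\ge \frac{t\,S_K^N(tR)}{S_K^N(R)}\,\nu[B[x_0,R]]$, which is strictly weaker than the claimed $\nu[B[x_0,tR]]\ge\bigl(\int_0^{tR}S_K^N\big/\int_0^{R}S_K^N\bigr)\,\nu[B[x_0,R]]$, because $\int_0^{tR}S_K^N=t\int_0^R S_K^N(tu)\,du$ and $S_K^N(tu)/S_K^N(u)>S_K^N(tR)/S_K^N(R)$ for $u<R$. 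Keeping the integral cannot help either: in the model space itself your chain is strict when $K\neq0$ (the interpolant $\mu_t$ has radial density proportional to $S_K^N(s/t)/S_K^N(s)$, which is non-constant, so the H\"older step loses a definite amount), whereas Bishop--Gromov is an \emph{equality} in the model; hence no algebraic manipulation of your inequality can produce it. (Your argument is fine precisely when $K=0$: there $\beta_t\equiv t^{N-1}$, the model interpolant is uniform, and the inequality reads $\nu[B[x_0,tR]]\ge t^N\nu[B[x_0,R]]$, which \emph{is} the $K=0$ monotonicity --- essentially the Lott--Villani nonnegative-curvature case.) The missing idea for general $K$ --- and what Sturm and Villani actually do in the proof the paper points to --- is to localize $\mu_1$ on thin spherical shells: apply the displacement convexity inequality with $\mu_1$ the normalized restriction of $\nu$ to an annulus $B[x_0,r+\delta]\setminus B(x_0,r)$, so that $\mu_t$ is supported in the contracted annulus; this gives a shell-to-shell comparison, which is then summed over a partition of $[0,R]$ into shells (a Riemann--Stieltjes argument, since $r\mapsto\nu[B[x_0,r]]$ need not be differentiable) to yield the monotonicity of $\nu[B[x_0,r]]/\int_0^rS_K^N$. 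So your ``secondary difficulty'' misdiagnoses the obstacle: what is needed is not a radial disintegration of $\nu$ but this annulus decomposition plus radial integration. Your closing deduction of $\nu[B(x_0,r)]=\nu[B[x_0,r]]$ from the monotonicity, by contrast, is sound.
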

%
%\marginpar{\tiny \bf See if you can say something about the case $N = \infty$ (What does the formula on \cite{Vi}, p. 806 say?}

\begin{proof}
See \cite{Vi}, p. 806.
\end{proof}

For us, the following corollary of this extension of the Bishop-Gromov Theorem is very important, since it allows for a {\it lower} bound on the number of balls in a efficient packing:

\begin{cor}[Measure of small balls in  weak $CD(K,N)$ spaces] \label{cor:small-balls}
Let $(X,d, \nu)$ be a weak ${\rm CD}(K,N)$ space, $N \in [1,\infty)$, and let $z \in {\rm Supp}\nu$. Then, for any $R > 0$, there exists $c = c(K,N,R)$ such that, if $B(x,r) \subset B(z,R)$, then:

\begin{equation}
\nu[B(x,r)] \geq \big(c\nu[B(z,R)]\big)r^N\,.
\end{equation}
\end{cor}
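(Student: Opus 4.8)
The plan is to read off the lower volume bound directly from the generalized Bishop--Gromov inequality (Theorem \ref{thm:BG++}), but applied with base point the \emph{center} $x$ of the small ball rather than $z$. Two ingredients are needed: a crude comparison of the two balls coming from the triangle inequality, and the elementary fact that $\int_0^r S_K^N(t)\,dt$ dominates a constant multiple of $r^N$ on any bounded range of radii. The distinction between open and closed balls is immaterial here, since Theorem \ref{thm:BG++} already guarantees that they carry the same $\nu$-measure.

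First I would record the geometric comparison. As $r>0$ we have $x\in B(x,r)\subseteq B(z,R)$, hence $d(x,z)<R$; consequently $d(x,y)\le d(x,z)+d(z,y)<2R$ for every $y\in B(z,R)$, so that $B(z,R)\subseteq B(x,2R)$ and $\nu[B(z,R)]\le\nu[B(x,2R)]$. The same containment, read in a geodesic space, forces $r\le 2R$ (a point of a minimizing geodesic issuing from $x$ at distance $r'<r$ lies in $B(z,R)$, whence $r'<2R$); this is precisely the regime of the small balls of an efficient packing, and the one in which the monotonicity may be used with outer radius $2R$. Invoking Theorem \ref{thm:BG++} at $x\in\mathrm{Supp}\,\nu$ (the packing centers lie in the support), the function $\rho\mapsto \nu[B(x,\rho)]/\int_0^\rho S_K^N(t)\,dt$ is nonincreasing, so for $0<r\le 2R$,
\[
\frac{\nu[B(x,r)]}{\int_0^r S_K^N(t)\,dt}\;\ge\;\frac{\nu[B(x,2R)]}{\int_0^{2R} S_K^N(t)\,dt}\;\ge\;\frac{\nu[B(z,R)]}{\int_0^{2R} S_K^N(t)\,dt},
\]
which rearranges to $\nu[B(x,r)]\ge \big(\int_0^r S_K^N(t)\,dt\big)\big/\big(\int_0^{2R} S_K^N(t)\,dt\big)\cdot\nu[B(z,R)]$.

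It then remains to bound $\int_0^r S_K^N$ below by a multiple of $r^N$. The function $t\mapsto S_K^N(t)/t^{N-1}$ is continuous and strictly positive on $(0,2R]$ and has a finite positive limit as $t\to 0^+$ (equal to $1$ for $K=0$ and to $(|K|/(N-1))^{(N-1)/2}$ otherwise), hence extends continuously to $[0,2R]$ and attains there a positive minimum $m=m(K,N,R)$. Thus $S_K^N(t)\ge m\,t^{N-1}$ and $\int_0^r S_K^N(t)\,dt\ge (m/N)\,r^N$ for $0<r\le 2R$. Substituting gives $\nu[B(x,r)]\ge c\,\nu[B(z,R)]\,r^N$ with
\[
c=c(K,N,R)=\frac{m(K,N,R)}{N\int_0^{2R} S_K^N(t)\,dt},
\]
a quantity depending only on $K$, $N$ and $R$, as required.

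I expect the only genuinely delicate point to be the positivity of $m$ when $K>0$: there $S_K^N(t)=\big(\sin(\sqrt{K/(N-1)}\,t)\big)^{N-1}$ degenerates as $t$ approaches the model diameter $\pi\sqrt{(N-1)/K}$, so one must ensure the range $[0,2R]$ stays within that diameter (equivalently, that $B(z,R)$ is not essentially all of a Bonnet--Myers--bounded $X$). In the small-ball regime relevant to the packing this is automatic, and outside it the statement is to be read only for $r$ below the diameter.
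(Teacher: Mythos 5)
The paper offers no proof of this corollary at all: it is stated as an immediate consequence of Theorem \ref{thm:BG++} (it is, in fact, Corollary 30.14 of Villani's book, to which the paper implicitly defers), and your argument is precisely that standard derivation --- apply Bishop--Gromov at the center $x$ with outer radius $2R$, use $B(z,R)\subseteq B(x,2R)$ to compare the two balls, and bound $\int_0^r S_K^N(t)\,dt$ from below by a constant multiple of $r^N$ on a bounded range of radii. Your proof is correct, and the caveats you flag --- that Bishop--Gromov must be invoked at $x\in\mathrm{Supp}\,\nu$ (not guaranteed by the statement as written, but automatic in the paper's applications, where $\mathrm{Supp}\,\nu=X$ is assumed), and that for $K>0$ the radii must stay below the model diameter $\pi\sqrt{(N-1)/K}$ (handled by generalized Bonnet--Myers) --- are imprecisions of the statement itself rather than gaps in your argument.
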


Since, by \cite{Vi}, Theorem 29.9, any smooth ${\rm CD}(K,N)$ metric measure space is also a weak ${\rm CD}(K,N)$ space, %i.e it also satisfies the ${\rm CD}(K,N)$ curvature-dimension bound,
it follows that a lower bound on the number of balls in an efficient packing also exists for smooth metric measure spaces.
%\marginpar{\tiny \bf Complete + locally compact}

Moreover, an expected doubling property also holds:

\begin{cor}[Weak $CD(K,N)$ spaces are locally doubling]
Let $(X,d, \nu)$ be a weak ${\rm CD}(K,N)$ space, $N \in [1,\infty)$, and let $z \in {\rm Supp}\nu$.
Then, for any fixed ball $B(z,R) \subset X$, there exists $C = C(K,N,R)$ such that, for all $r \in (0,R)$, $\nu[B(z,2r)] \leq C\nu[B(z,r)]$.
In particular, if ${\rm diam}X \leq D$, then $X$ is $C$-doubling, where $C = C(K,N,D)$.
\end{cor}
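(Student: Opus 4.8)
The plan is to derive the doubling property directly from the generalized Bishop--Gromov inequality of Theorem~\ref{thm:BG++}. The key observation is that the quantity $\varphi(r) = \nu[B[z,r]]/\int_0^r S_K^N(t)\,dt$ is nonincreasing in $r$. Since $r < 2r$, monotonicity gives $\varphi(2r) \le \varphi(r)$, which rearranges to
\begin{equation}
\frac{\nu[B[z,2r]]}{\nu[B[z,r]]} \le \frac{\int_0^{2r} S_K^N(t)\,dt}{\int_0^r S_K^N(t)\,dt}\,.
\end{equation}
So the entire matter reduces to bounding the ratio of the integrals of $S_K^N$ uniformly for $r \in (0,R)$.

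First I would treat the model integral $I(r) = \int_0^r S_K^N(t)\,dt$ according to the sign of $K$. When $K = 0$ one has $S_K^N(t) = t^{N-1}$, so $I(r) = r^N/N$ and the ratio is exactly $2^N$, a constant; this is the clean baseline case. For $K \ne 0$ the function $S_K^N$ is a power of a sine or hyperbolic sine, and the ratio $I(2r)/I(r)$ is a continuous function of $r$ on the compact interval $[0,R]$. The only delicate point is the behavior as $r \to 0^+$: near zero, $\sin(\sqrt{K/(N-1)}\,t) \sim \sqrt{K/(N-1)}\,t$ and similarly for $\sinh$, so $S_K^N(t) \sim (\text{const})\,t^{N-1}$ and hence $I(2r)/I(r) \to 2^N$ as $r \to 0^+$. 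Thus the ratio extends to a continuous function on all of $[0,R]$, and being continuous on a compact set it attains a finite maximum. Setting $C = C(K,N,R)$ equal to this maximum (which depends only on $K$, $N$, and $R$, not on $r$ or on the space $X$) yields $\nu[B(z,2r)] \le \nu[B[z,2r]] \le C\,\nu[B[z,r]] = C\,\nu[B(z,r)]$, where I have also used the equality of open and closed ball measures furnished by Theorem~\ref{thm:BG++}.

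For the final assertion, suppose $\operatorname{diam} X \le D$. Then every ball has radius effectively bounded by $D$, so I apply the estimate above with $R = D$: every $z \in \operatorname{Supp}\nu$ and every $r \in (0,D)$ satisfy $\nu[B(z,2r)] \le C\,\nu[B(z,r)]$ with the single constant $C = C(K,N,D)$, which is exactly the statement that $X$ is $C$-doubling. The main (and really only) obstacle is verifying the $r \to 0^+$ limit so as to guarantee the continuous extension and hence the finiteness of the supremum; but as sketched this is a routine asymptotic computation, since the $N-1$ leading powers of $t$ in numerator and denominator cancel and leave the Euclidean constant $2^N$. Everything else is an immediate consequence of the monotonicity already established.
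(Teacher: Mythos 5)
Your argument is correct and is exactly the route the paper intends: the paper states this corollary without any written proof, presenting it as an immediate consequence of Theorem~\ref{thm:BG++}, and your derivation (monotonicity of $\varphi$, reduction to the ratio $\int_0^{2r}S_K^N\big/\int_0^{r}S_K^N$, continuity plus the limit $2^N$ at $r\to 0^+$, compactness of $[0,R]$) is precisely the standard way to fill in that step, including the correct use of the open/closed ball measure equality.

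One caveat deserves mention: your claim that $I(2r)/I(r)$ is continuous on all of $[0,R]$ is not quite right when $K>0$, because $S_K^N(t)=\bigl(\sin\sqrt{K/(N-1)}\,t\bigr)^{N-1}$ is undefined (or sign-changing, for non-integer $N$) once $t$ exceeds $\pi\sqrt{(N-1)/K}$, so $I(2r)$ makes no sense if $2R$ exceeds that threshold. The patch is routine but uses an input you did not name: by the generalized Bonnet--Myers theorem for weak ${\rm CD}(K,N)$ spaces (see \cite{Vi}), ${\rm diam}\,({\rm Supp}\,\nu)\leq\pi\sqrt{(N-1)/K}$ when $K>0$, so radii beyond this value saturate the balls and one may cap both the radii and the model integrals at $\pi\sqrt{(N-1)/K}$; with that convention your compactness argument goes through verbatim, and the constant still depends only on $K$, $N$, $R$.
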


%-----------------------------------------------------------------------

\section{Triangulating metric measure spaces}

\subsection{Smooth metric measure spaces}

\subsubsection{The basic construction}

The lemmas regarding  efficient packings translate to the context of smooth metric measure spaces with little, if any, modifications:

\begin{lem}[Lemma 2.2 on manifolds with density] \label{lem:2.2+}
Let $(M^n,d,\nu)$, $\nu = e^{-V}{\rm Vol}$, be a compact(closed) smooth metric measure space  such that $Ric_{N,\nu} \geq K$, for some $K \in \mathbb{R}, 1 < N < \infty$, and such that ${\rm diam}M^n \leq D$.
%\marginpar{\tiny \bf There are some results on non-compact $CD(N,K)$  spaces (see ?????) but at this point it would be wiser not to go into this.}
Then there exists $n_1 = n_1(N,K,D)$, \
%\marginpar{\tiny \bf *$\rightarrow$ And $N$?!}
such that if
$\{p_1,\ldots,p_{n_0}\}$ is a minimal $\varepsilon$-net on $M^n$, then $n_0
\leq n_1$.
\end{lem}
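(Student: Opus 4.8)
The plan is to mirror the original proof of Lemma 2.2 exactly, simply replacing the classical Riemannian volume by the weighted measure $\nu = e^{-V}{\rm Vol}$ and invoking the Generalized Bishop-Gromov Inequality (Theorem \ref{thm:BG+}) in place of its classical counterpart. First I would fix a minimal $\varepsilon$-net $\{p_1,\ldots,p_{n_0}\}$ on $M^n$ and select an index $i_0$ for which $\nu[B(p_{i_0},\varepsilon/2)]$ is minimal among all the packing balls. Since by Definition 2.1(2) the balls $B(p_k,\varepsilon/2)$ are pairwise disjoint, they contribute disjointly to the total measure $\nu[M^n]$, so
\[
n_0 \cdot \nu[B(p_{i_0},\varepsilon/2)] \leq \nu[M^n]\,,
\]
which gives the upper bound $n_0 \leq \nu[M^n]/\nu[B(p_{i_0},\varepsilon/2)]$.

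Next I would control this ratio by a quantity depending only on $N, K, D$. Applying Theorem \ref{thm:BG+} to the point $p_{i_0}$ with inner radius $r = \varepsilon/2$ and outer radius $R = {\rm diam}\,M^n \leq D$, the monotonicity of $\varphi(r) = \nu[B(x,r)] / \int_0^r S_K^N(t)\,dt$ yields
\[
\frac{\nu[B(p_{i_0},\varepsilon/2)]}{\int_0^{\varepsilon/2} S_K^N(t)\,dt} \geq \frac{\nu[B(p_{i_0},D)]}{\int_0^{D} S_K^N(t)\,dt}\,.
\]
Since $M^n$ has diameter at most $D$, the ball $B(p_{i_0},D)$ exhausts $M^n$, so $\nu[B(p_{i_0},D)] = \nu[M^n]$. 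Rearranging, I obtain
\[
\frac{\nu[M^n]}{\nu[B(p_{i_0},\varepsilon/2)]} \leq \frac{\int_0^{D} S_K^N(t)\,dt}{\int_0^{\varepsilon/2} S_K^N(t)\,dt}\,,
\]
so that setting
\[
n_1 = \left[\frac{\int_0^{D} S_K^N(t)\,dt}{\int_0^{\varepsilon/2} S_K^N(t)\,dt}\right]
\]
furnishes the required bound $n_0 \leq n_1$, where the right-hand side manifestly depends only on $N$, $K$, and $D$ (and on the chosen $\varepsilon$, which is a fixed scale parameter of the net).

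The only genuine point requiring care — and the step I expect to be the main obstacle — is justifying that the comparison function in the \emph{weighted} Bishop-Gromov theorem may be used in the same volume-counting argument as in the unweighted case. Concretely, I must confirm that Theorem \ref{thm:BG+} compares the weighted measure of a ball against the \emph{same} model integrand $\int_0^r S_K^N(t)\,dt$ for the effective dimension $N$, so that the cancellation producing a dimensionless ratio goes through; this is exactly what the curvature-dimension condition ${\rm CD}(K,N)$ with ${\rm Ric}_{N,\nu} \geq K$ is designed to guarantee, via the replacement of the genuine dimension $n$ by the effective dimension $N$ in the model volume element. Once this is in hand, the estimate is formally identical to the classical one, and no further Riemannian-specific structure (such as the sharper model comparison via $S_K^n$) is needed. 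I would close by remarking that the independence of $n_1$ from the manifold itself — only the bounds $N, K, D$ enter — is precisely what makes the construction transfer to the metric measure setting.
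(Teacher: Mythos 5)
Your proposal is correct and follows exactly the route the paper intends: the paper's own proof is a one-line remark that the classical Grove--Petersen argument for Lemma 2.2 goes through verbatim once the classical Bishop--Gromov inequality is replaced by Theorem \ref{thm:BG+}, which is precisely what you carry out, with the disjointness bound $n_0\,\nu[B(p_{i_0},\varepsilon/2)] \leq \nu[M^n]$ and the monotonicity of $\varphi(r)=\nu[B(x,r)]/\int_0^r S_K^N(t)\,dt$ yielding $n_1=\bigl[\int_0^{D} S_K^N(t)\,dt \,/\, \int_0^{\varepsilon/2} S_K^N(t)\,dt\bigr]$. Your explicit substitution of the model integrand $\int_0^r S_K^N(t)\,dt$ for the space-form ball volumes is the natural (and correct) adaptation, since in the weighted setting no model space-form volume appears in the comparison.
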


\begin{proof}%[Proof of Lemma 2.4]
Since the only essential ingredient of the proof of Lemma 2.2 is the Bishop-Gromov volume comparison theorem, and since by Theorem \ref{thm:BG+} its analogue also holds for smooth metric measure spaces, the proof follows immediately precisely on the same lines as that of Lemma 2.4.
\end{proof}

\begin{lem}[Lemma 2.3 on manifolds with density] \label{lem:2.3+}
Let $(M^n,d,\nu)$ be as in Lemma \ref{lem:2.2+}.
There exists $n_2 = n_2(N,K,D)$, such that, for any $x \in M^n$,
$\left|\{j \,|\,\newline j = 1,\ldots,n_0\; {\rm and}\;
\beta^n(x,\varepsilon) \cap \beta^n(p_j,\varepsilon) \neq
\emptyset\}\right| \leq n_2$, for any minimal $\varepsilon$-net
$\{p_1,\ldots,p_{n_0}\}$.
\end{lem}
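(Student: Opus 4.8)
The plan is to imitate, almost verbatim, the proof of Lemma 2.3, replacing the Riemannian volume $\mathrm{Vol}$ by the reference measure $\nu$ and the classical Bishop--Gromov comparison by its smooth metric-measure analogue, Theorem \ref{thm:BG+}. First I would fix $x \in M^n$ and let $j_1,\ldots,j_s$ enumerate those indices $j$ for which $\beta^n(x,\varepsilon) \cap \beta^n(p_j,\varepsilon) \neq \emptyset$; the goal is to bound $s$ by a constant depending only on $N,K,D$. Each such nonempty intersection forces $d(x,p_{j_i}) < 2\varepsilon$, so by the triangle inequality $B(p_{j_i},\varepsilon/2) \subset B(x,5\varepsilon/2)$, and likewise $B(x,5\varepsilon/2) \subset B(p_{j_k},9\varepsilon/2)$ for any center $p_{j_k}$. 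Crucially, by condition (2) of Definition 2.1 the small balls $B(p_{j_i},\varepsilon/2)$ are pairwise disjoint.

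Next I would choose the index $k$ for which $\nu\big[B(p_{j_k},\varepsilon/2)\big]$ is minimal. Disjointness together with the two inclusions above yields
\[
s\,\nu\big[B(p_{j_k},\varepsilon/2)\big] \leq \sum_{i=1}^{s}\nu\big[B(p_{j_i},\varepsilon/2)\big] \leq \nu\big[B(x,5\varepsilon/2)\big] \leq \nu\big[B(p_{j_k},9\varepsilon/2)\big]\,,
\]
hence
\[
s \leq \frac{\nu\big[B(p_{j_k},9\varepsilon/2)\big]}{\nu\big[B(p_{j_k},\varepsilon/2)\big]}\,.
\]
Applying the monotonicity of $r \mapsto \nu\big[B(p_{j_k},r)\big]\big/\int_0^r S_K^N(t)\,dt$ supplied by Theorem \ref{thm:BG+} to the radii $\varepsilon/2 < 9\varepsilon/2$ bounds this ratio by the purely model quantity
\[
h(\varepsilon) = \frac{\int_0^{9\varepsilon/2} S_K^N(t)\,dt}{\int_0^{\varepsilon/2} S_K^N(t)\,dt}\,,
\]
which depends neither on the point $p_{j_k}$ nor on the space, but only on $N$ and $K$.

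It then remains to see that $h$ is bounded on $(0,D]$. Since $S_K^N(t) \sim t^{N-1}$ as $t \to 0^+$ in all three cases of its definition, the numerator and denominator are each comparable to a power of $\varepsilon$, and $h$ extends to a continuous function $\tilde h:[0,D] \to \mathbb{R}_+$ with finite value at $0$ (equal to $9^N$). Being continuous on a compact interval, $\tilde h$ attains a maximum, and setting $n_2 = n_2(N,K,D) = \big[\max_{\varepsilon\in[0,D]}\tilde h(\varepsilon)\big]$ gives $s \leq n_2$, uniformly over $x$, over the net, and—most importantly—over $\varepsilon$.

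The whole argument is a routine transcription, so there is no serious obstacle; the single point demanding care is precisely this last continuous-extension step. One must verify that the small-$\varepsilon$ asymptotics $S_K^N(t)\sim t^{N-1}$ make $h$ converge to a finite limit rather than blow up, since only this guarantees that $n_2$ is genuinely independent of $\varepsilon$—exactly the phenomenon recorded in Remark 2.6 for the classical case. With the effective dimension $N$ now playing the role formerly played by $n$, this asymptotic is unchanged, and the conclusion follows.
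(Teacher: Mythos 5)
Your proof is correct and takes essentially the same approach as the paper: the paper's own proof is a one-sentence remark that the classical proof of Lemma 2.3 carries over once Theorem \ref{thm:BG+} replaces the classical Bishop--Gromov comparison, and your argument is precisely that transcription, with the model-space volume ratio replaced by the ratio of the integrals $\int_0^r S_K^N(t)\,dt$. In fact your treatment of the $\varepsilon \to 0$ limit is more careful than the paper's classical version, which misstates the limit of $h(\varepsilon)$ as $0$ rather than the correct finite value ($9^N$ in your setting), which is exactly what guarantees $n_2$ is independent of $\varepsilon$.
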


\begin{proof}
Again, as in the proof of the previous lemma, the only relatively new ingredient is the generalized Bishop-Gromov Theorem, which as we have seen, holds for smooth metric measure spaces.
\end{proof}

\begin{lem}[Lemma 2.4 on manifolds with density] \label{lem:2.4+}
Let $(M_1^n,d_1,\nu_1)$ and $(M_2^n,d_2,\nu_2)$ be as in Lemma \ref{lem:2.2+}.
and let $\{p_1,\ldots,p_{n_0}\}$ and $\{q_1,\ldots,q_{n_0}\}$ be minimal $\varepsilon$-nets with the same
intersection pattern, on $M_1^n$, $M_2^n$, respectively. Then
there exists a constant $n_3 = n_3(N,K,D,C)$, such that if
$d_1(p_i,p_j) < C\cdot\varepsilon$, then $d_2(q_i,q_j) <
n_3\cdot\varepsilon$.
\end{lem}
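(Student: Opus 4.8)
The plan is to run the original proof of Lemma 2.4 essentially unchanged, replacing the Riemannian volume by the reference measure $\nu$ and the comparison kernel $S_K^n$ by $S_K^N$; the sole analytic input is the generalized Bishop--Gromov inequality (Theorem \ref{thm:BG+}), which is available precisely because $Ric_{N,\nu} \geq K$. The argument separates into a \emph{counting step} carried out on $M_1^n$ and a purely combinatorial \emph{transfer step} to $M_2^n$ through the common intersection pattern.

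First I would fix $i,j$ with $d_1(p_i,p_j) < C\varepsilon$ and manufacture a chain of net points joining $p_i$ to $p_j$ along which consecutive $\varepsilon$-balls overlap. Since $M_1^n$ is closed, hence geodesic, there is a minimal geodesic $\gamma$ from $p_i$ to $p_j$ of length $< C\varepsilon$; its image is connected and is covered by the $\varepsilon$-balls of the net (condition 2.1(1)). The net points whose $\varepsilon$-balls meet $\gamma$ have centers in $B(p_i,(C+1)\varepsilon)$, and because $\gamma$ is connected these balls can be ordered into a chain $p_i = p_{k_0}, p_{k_1}, \dots, p_{k_m} = p_j$ with $B(p_{k_s},\varepsilon) \cap B(p_{k_{s+1}},\varepsilon) \neq \emptyset$ for every $s$.

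Next I would bound the chain length $m$ by a constant free of $\varepsilon$. All $p_{k_s}$ lie in $B(p_i,(C+1)\varepsilon)$, so their pairwise disjoint half-balls $B(p_{k_s},\varepsilon/2)$ (condition 2.1(2)) are contained in $B(p_i,(C+\tfrac32)\varepsilon)$; choosing the index $k$ that minimizes $\nu[B(p_{k_s},\varepsilon/2)]$ and using $B(p_i,(C+\tfrac32)\varepsilon) \subset B(p_k,(2C+3)\varepsilon)$, the disjointness together with Theorem \ref{thm:BG+} applied at the center $p_k$ gives
\[
m \,\leq\, \frac{\nu[B(p_k,(2C+3)\varepsilon)]}{\nu[B(p_k,\varepsilon/2)]} \,\leq\, \frac{\int_0^{(2C+3)\varepsilon} S_K^N(r)\,dr}{\int_0^{\varepsilon/2} S_K^N(r)\,dr} \,=:\, n'(N,K,C).
\]
Exactly as in the Remark following Lemma 2.3, the right-hand side extends to a continuous function of $\varepsilon$ on $[0,D]$ (and is outright constant when $K=0$, by homogeneity of $r^{N-1}$), so its supremum is a finite constant $n' = n'(N,K,D,C)$ independent of $\varepsilon$.

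Finally I would transfer the chain to $M_2^n$. Since the two nets share the same intersection pattern, each overlap $B(p_{k_s},\varepsilon) \cap B(p_{k_{s+1}},\varepsilon) \neq \emptyset$ forces $B(q_{k_s},\varepsilon) \cap B(q_{k_{s+1}},\varepsilon) \neq \emptyset$, hence $d_2(q_{k_s},q_{k_{s+1}}) < 2\varepsilon$; the triangle inequality summed along the chain then yields $d_2(q_i,q_j) < 2m\varepsilon \leq n_3\varepsilon$ with $n_3 = 2n'(N,K,D,C)$. The step I expect to be the main obstacle is the counting step: one must verify that the geodesic genuinely produces a chain of overlapping net balls and, above all, that the bound on $m$ is $\varepsilon$-free, which rests on the (near-)homogeneity of the comparison integrals handled by the continuous-extension argument. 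The transfer step, by contrast, uses nothing beyond the definition of the intersection pattern.
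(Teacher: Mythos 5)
Your proposal is correct and follows essentially the same route as the paper: the paper's proof of Lemma \ref{lem:2.4+} simply says the original argument of Lemma 2.4 goes through ``by force of Theorem \ref{thm:BG+}'', i.e.\ one replaces ${\rm Vol}$ by $\nu$ and $S_K^n$ by $S_K^N$ in the ball-counting step and then transfers the chain through the common intersection pattern, which is exactly your counting step plus transfer step. You have in fact supplied more detail than either the paper's one-line proof or its terse rendition of the Grove--Petersen original (the explicit chain along the geodesic and the $\varepsilon$-free bound via the continuous extension of the comparison ratio), but the underlying argument is the same.
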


\begin{proof}
Idem, the proof follows along the lines of the original proof (of Lemma 2.4), by force of Theorem \ref{thm:BG+}.
\end{proof}

\begin{rem}
Note that the constants $n_1, n_2, n_3$, as functions of the effective dimension $N$, rather than the topological one $n$, are only very weekly dependent (in general) on the geometry of the manifold $M^n$, via the inequality condition $n \leq N$. While they still will guarantee the existence of minimal $\varepsilon$-nets with the required properties, the metric density of this nets will be different from the purely geometric one given by the classical Grove-Petersen construction, hence so will be the shape (or ``thickness'', see Definition 4.8 below) of the simplices of the resulting triangulation.
\end{rem}

The basic triangulation process %(method)
now carries to the context of smooth metric measure spaces without any modification. The convexity of the triangulation follows exactly like in the classical case, since the injectivity and convexity radius are solely functions of the metric $d$ and not of the weighted volume $\nu$.%

\begin{rem}
As we have already noted in Remark 2.7, the lower bound on the sectional curvature is essential for the proof of the Homotopy Theorem \ref{thm:GP-Main}. Therefore, one cannot formulate a similar theorem for the weighted manifolds, without imposing the additional constraint on sectional curvature. The original argument of Grove and Petersen still holds for the Riemannian structure, while measure has no other role than determining the existence and metric density of the triangulation vertices. In consequence, we can only obtain a rather weak result, that we bring here (for completeness' sake) only as corollary, namely:

\begin{cor}[Theorem \ref{thm:GP-Main} for smooth metric measure spaces] \label{thm:GP-Main-SmoothMMSp}
Let $(M_1^n,d_1,\nu_1)$, $(M_2^n,d_2,\nu_2)$, $\nu_i = e^{-V_i}d{\rm Vol}$, $V_i \in \mathcal{C}^2(\mathbb{R})$, $i = 1,2$ be smooth, compact metric measure spaces satisfying ${\rm CD}(K,N)$ for some $K \in \mathbb{R}$ and $1 < N < \infty$, and such that ${\rm diam}M^n_i < D$, ${\rm Vol}M_i^n < v$, $i=1,2$ and, moreover, having the same lower bound $k$ on their sectional curvatures.
Then there exists $\varepsilon = \varepsilon(N,K,k,D,v)$ such that, if $M_1^n, M_2^n$ have minimal packings with identical intersection patterns, they are homotopy equivalent.
\end{cor}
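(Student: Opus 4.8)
The plan is to reduce the statement to the classical Grove--Petersen Homotopy Theorem \ref{thm:GP-Main} applied to the underlying Riemannian structures $(M_i^n,g_i)$, treating the reference measures $\nu_i$ as auxiliary data that govern only the \emph{existence} of the packings and not the homotopy conclusion itself. The crucial observation is that the notion of a minimal $\varepsilon$-net (Definition 2.1) and of its intersection pattern is purely metric: both are defined in terms of the geodesic distance $d$ and the covering/packing behaviour of metric balls, and make no reference whatsoever to $\nu$. Consequently, the hypothesis that the two packings share an \emph{identical intersection pattern} is a statement about the metric structures $(M_i^n,d_i)$ alone, and it is in this form that it feeds into the Grove--Petersen machine.

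First I would record that, by hypothesis, each $(M_i^n,g_i)$ carries a lower sectional curvature bound $k$, an upper diameter bound $D$, and the volume bound $v$; these are precisely the standing assumptions of Section 2 under which Theorem \ref{thm:GP-Main} was established. In particular, by Cheeger's theorem (cited in Section 2) there is a uniform positive lower bound on ${\rm InjRad}(M_i^n)$ depending only on $k,D,v$, and hence a uniform lower bound on the convexity radius. The weighted measure and the ${\rm CD}(K,N)$ condition enter only through Lemmas \ref{lem:2.2+}--\ref{lem:2.4+}: by force of the generalized Bishop--Gromov Theorem \ref{thm:BG+}, these guarantee that minimal $\varepsilon$-nets exist with uniformly bounded combinatorial complexity (cardinality $\le n_1$, local overlap $\le n_2$, and distance distortion controlled by $n_3$), with the constants depending on $(N,K,D)$.

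The choice of $\varepsilon=\varepsilon(N,K,k,D,v)$ then proceeds by taking $\varepsilon$ small enough to meet three requirements simultaneously: (a) $\varepsilon$ lies below the common convexity radius, so that the cell complex built on the net vertices is convex and triangulates to a genuine simplicial complex, as in the basic construction of Section 2; (b) the weighted packing Lemmas \ref{lem:2.2+}--\ref{lem:2.4+} apply, furnishing nets with the stated uniform bounds (this contributes the dependence on $N,K$); and (c) the classical Grove--Petersen ``butterfly'' argument is valid, supplying the dependence on $k,D,v$. With such an $\varepsilon$ fixed, and given two nets with identical intersection patterns, one invokes the homotopy-equivalence construction of Theorem \ref{thm:GP-Main} verbatim on the Riemannian structures $(M_i^n,g_i)$; since that construction uses only the metric $d_i$, the sectional bound, and the injectivity radius, it yields the desired homotopy equivalence without ever referring to $\nu_i$.

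The point at which no genuine improvement is possible --- and hence the reason the result is phrased only as a corollary --- is the reliance on the sectional curvature bound. As emphasized in Remark 2.7, the Grove--Petersen homotopy argument fails under a mere Ricci-type (equivalently, ${\rm CD}(K,N)$) hypothesis, even in the unweighted Riemannian case. Thus the sectional bound $k$ must be imported as an extra assumption, and $\nu_i$ contributes nothing beyond fixing the metric density of the triangulation vertices. The main obstacle is therefore not a technical difficulty in carrying out the proof but the structural fact that the weighted/${\rm CD}(K,N)$ framework cannot, on its own, drive the homotopy conclusion; the argument succeeds precisely because and only insofar as the classical geometric hypotheses are retained alongside the measure-theoretic ones.
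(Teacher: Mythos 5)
Your proposal is correct and takes essentially the same route as the paper: the paper also treats this result as an immediate consequence of Theorem \ref{thm:GP-Main} applied to the underlying Riemannian structures, observing (exactly as you do) that the measure's only role is to determine the existence and metric density of the net vertices, while the sectional curvature bound must be retained because, per Remark 2.7, the Grove--Petersen homotopy argument cannot be driven by Ricci-type or ${\rm CD}(K,N)$ hypotheses alone. Your discussion of how $\varepsilon$ acquires its dependence on $(N,K,k,D,v)$ is in fact more explicit than anything in the paper, which states the corollary without a separate proof.
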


\end{rem}

%-----------------------------------------------

\subsubsection{Thick triangulation and quasimeromorphic mappings}

We can strengthen the simple result above, to render a ``geometrically nice'' triangulation, namely we can formulate the following

\begin{prop} \label{prop:thick-triang}
Any smooth, compact metric measure  space $(M^n,d,\nu)$ satisfying ${\rm CD}(K,N)$ admits a $\varphi^*$-thick triangulation, where $\varphi^* = \varphi^*(n,d,\nu)$.
\end{prop}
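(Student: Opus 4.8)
The plan is to upgrade the crude triangulation obtained above --- whose simplices may well degenerate, since extra edges were inserted arbitrarily in order to simplicialize the cell complex --- into one whose simplices are uniformly non-degenerate, i.e.\ $\varphi^*$-thick in the sense of Definition 4.8 below. The starting observation is that an efficient packing already supplies a \emph{quasi-uniform} vertex set: by condition (2) of Definition 2.1 the points $p_k$ are $\varepsilon/2$-separated, while by condition (1) they are $\varepsilon$-dense, so the density-to-separation ratio is at most $2$, a constant independent of $\varepsilon$. It is precisely such quasi-uniform nets, rather than arbitrary ones, that can be organized into thick triangulations.

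Next I would exploit the fact that a smooth, compact $M^n$ has bounded geometry: its sectional curvature satisfies $|k_M| \leq \Lambda$ for some $\Lambda = \Lambda(d)$, and both the injectivity and convexity radii are bounded below by positive constants $i_0, c_0$ depending only on $d$. Choosing $\varepsilon$ sufficiently small relative to $\Lambda, i_0, c_0$, every cluster of mutually adjacent net points lies inside a single normal coordinate chart on which the exponential map is a diffeomorphism whose distortion is controlled by $\Lambda\varepsilon^2$, the metric components reading $g_{ij} = \delta_{ij} + O(\Lambda|x|^2)$. Through such a chart the local vertex configuration is carried to a quasi-uniform point set in a Euclidean ball, with the same separation-to-density ratio up to a factor $1 + O(\Lambda\varepsilon^2)$.

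The heart of the argument is then the construction, on the Euclidean side, of a thick triangulation of a quasi-uniform point set, followed by pulling it back by the exponential map. For this I would invoke the now-standard thickening procedure (Cairns \cite{ca1}, Peltonen \cite{pe}, and in the form best suited here \cite{s1}): a quasi-uniform point set in $\mathbb{R}^n$ admits a triangulation all of whose simplices have thickness bounded below by a constant $\varphi_0 = \varphi_0(n,\rho)$ depending only on the dimension and the quasi-uniformity ratio $\rho$. Since bi-Lipschitz maps with constant close to $1$ distort thickness only by a bounded factor, the pulled-back geodesic simplices inherit a lower thickness bound $\varphi^*$ depending on $n$, on $\rho \leq 2$, and on $\Lambda, i_0, c_0$ --- hence ultimately on $n$, the metric $d$, and, through the density of the net governed by Corollary \ref{cor:small-balls}, the measure $\nu$. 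Matching these local pieces across overlapping charts, so that the thick simplices fit together into a global simplicial complex homeomorphic to $M^n$, completes the construction.

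The main obstacle is exactly this control of thickness against the formation of \emph{slivers}. Quasi-uniformity alone bounds edge lengths from above and below, but in dimension $n \geq 3$ it does not by itself forbid flat, nearly degenerate simplices; a Delaunay-type filling, for instance, is known to produce slivers. The delicate point, carried by the Cairns--Peltonen machinery, is to select the filling simplices (or to perturb the vertices by an amount $o(\varepsilon)$) so as to guarantee a uniform lower bound on the inradius-to-longest-edge ratio, and then to verify that this bound survives both the exponential-map distortion and the gluing across charts. The weighted measure $\nu$ plays no role in the shape estimate itself --- thickness being a purely metric quantity --- and enters only in fixing how dense the vertex set must be, which is why the resulting constant takes the form $\varphi^* = \varphi^*(n,d,\nu)$.
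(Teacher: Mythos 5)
Your proposal is correct in its essentials and rests on exactly the observation that the paper's own proof hinges on: thickness is a purely metric quantity, the measure $\nu$ leaves the Riemannian geometry of $(M^n,d)$ untouched, and $\nu$ enters only through the metric density of the vertex set (via Lemmas \ref{lem:2.2+}--\ref{lem:2.4+} and Corollary \ref{cor:small-balls}), which is why the constant takes the form $\varphi^* = \varphi^*(n,d,\nu)$. Where you diverge is in the classical machinery invoked to actually produce the thick triangulation. The paper takes the crude triangulation already built from the efficient packing and fattens it by the $\delta$-transversality and $\varepsilon$-moves arguments of Cheeger, M\"{u}ller and Schrader, citing \cite{cms} for the lengthy technical details; this is a perturbation of an existing triangulation. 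You instead rebuild the triangulation locally: push the quasi-uniform net into normal coordinates, invoke the Cairns--Peltonen--type thickening of quasi-uniform Euclidean point sets (\cite{ca1}, \cite{pe}, \cite{s1}), pull back by the exponential map, and glue across charts. Both routes are legitimate and both ultimately defer to a substantial body of classical work; yours is more constructive and makes explicit the two genuine difficulties (sliver avoidance in dimension $n \geq 3$ and chart-to-chart compatibility) that the paper leaves buried in the reference to \cite{cms}. The price is that your final gluing step --- assembling the locally thick complexes into a single simplicial complex homeomorphic to $M^n$ without destroying the thickness bound along chart overlaps --- is asserted rather than argued, and this is traditionally the step requiring the most care in the Cairns--Peltonen approach; the paper's choice of \cite{cms} sidesteps it by fattening a globally defined complex from the start.
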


Recall that {\it thick} (or {\it fat}) triangulations are defined as follows:

\begin{defn} Let $\tau \subset \mathbb{R}^n$ ; $0 \leq k \leq n$ be a $k$-dimensional simplex.
The {\it thickness}  $\varphi$ of $\tau$ is defined as being:
\begin{equation}
\varphi = \varphi(\tau) = \hspace{-0.3cm}\inf_{\hspace{0.4cm}\sigma
< \tau
\raisebox{-0.25cm}{\hspace{-0.9cm}\mbox{\scriptsize${\rm dim}\,\sigma=j$}}}\!\!\frac{\rm Vol_j(\sigma)}{\rm diam^{j}\,\sigma}\;.
\end{equation}
The infimum is taken over all the faces of $\tau$, $\sigma < \tau$,
and ${\rm Vol}_{j}(\sigma)$ and ${\rm diam}\,\sigma$ stand for the Euclidian
$j$-volume and the diameter of $\sigma$ respectively. (If
${\rm dim}\,\sigma = 0$, then ${\rm Vol}_{j}(\sigma) = 1$, by convention.)
 A simplex $\tau$ is $\varphi_0${\it-thick}, for some $\varphi_0 > 0$,
if $\varphi(\tau) \geq \varphi_0$. A triangulation (of a submanifold
of $\mathbb{R}^n$) $\mathcal{T} = \{ \sigma_i \}_{i\in \bf I}$ is
$\varphi_0${\it-thick} if all its simplices are $\varphi_0$-thick. A
triangulation $\mathcal{T} = \{ \sigma_i \}_{i\in \bf I }$ is {\it
thick} if there exists $\varphi_0 \geq 0$ such that all its
simplices are $\varphi_0${\it-thick}.
\end{defn}

(The definition above is the one introduced in \cite{cms}. For some equivalent definitions of thickness, see \cite{ca1},
 \cite{mun}, \cite{pe}, \cite{tu}, \cite{Whi}.)

\begin{rem}
By \cite{cms}, pp. 411-412, a triangulation is thick iff the dihedral angles (in any dimension) of all the simplices of the triangulation are bounded away from zero.
\end{rem}

\begin{proof} %(of Proposition \ref{prop:thick-triang})
Since the geometry of the manifold is not affected by the presence of the measure $\nu$, the $\delta$-{\it transversality} and $\varepsilon$-{\it moves} arguments of Cheeger et al. apply unchanged -- see \cite{cms} for the lengthy technical details.

Moreover, the resulting triangulation is ``$\nu$-sensitive'', insomuch as the metric density of the vertices, hence the
thickness $\varphi^*$ of the triangulation's simplices is a function of the measure $\nu$ (as shown by Lemmas \ref{lem:2.2+}--\ref{lem:2.4+}  above).
\end{proof}

%Note that the triangulation provided(???) by the proposition above
%However,
%

\begin{rem}
As the proof above shows, the role of the measure in determining the thickness of the triangulation is somewhat marginal. Therefore, it would be useful to modify the thickness condition so that it will reflect (and adapt to) the presence of the measure $\nu$. (This may prove to be even more relevant in the case of weak ${\rm CD}(K,N)$ spaces.)
%
%\marginpar{\tiny \bf Thickness $\equiv$ big dihedral angles, etc.}
%
However, if the new angles $\measuredangle_{K,N}$ depend continuously on the standard ones, then, given that $\nu = e^{-V}{\rm Vol}$ and $V \in \mathcal{C}^2(\mathbb{R})$, the continuity and compactness arguments involved in the proof of the classical case also hold for generalized (dihedral) angles, % introduced above...
a generalization %(strengthening)
of Proposition \ref{prop:thick-triang} will follow immediately
(as will, in consequence, the corresponding one of Corollary \ref{cor:qm-mmsp} below).
\end{rem}

\begin{rem}
For Riemannian manifolds, on can construct thick  triangulations, for non-compact manifolds with\footnote{ at least for many cases} or without boundary,  (see, e.g. \cite{pe}, \cite{s1}, \cite{S3}).
%Explain why can't generalize immediately(?!) to {\bf non-compact} spaces and/or weighted manifolds with boundary.
However, for weighted manifolds this is far from obvious, because there exists no a priori information regarding the properties of the restriction of the measure $\nu$ to the $(n-1)$-dimensional manifolds required in the process of the triangulation.
\end{rem}

Before proceeding further, we have to remind the reader the definition of
{\it quasimeromorphic mappings}:

\begin{defn}
Let $M^n, N^n$ be oriented, Riemannian $n$-manifolds.
\begin{enumerate}
\item $f:M^n \rightarrow N^n$ is called {\it quasiregular} ($qr$) iff
\begin{enumerate}
\item $f$ is locally Lipschitz (and thus differentiable a.e.);
\\ \hspace*{-0.8cm} and
\item \(0 < |f'(x)|^n \leq KJ_f(x)\), for any \(x \in M^n\);
\end{enumerate}
\;where $f'(x)$ denotes the formal derivative of $f$ at $x$,
$|f'(x)| = \sup
\raisebox{-0.25cm}{\mbox{\hspace{-0.75cm}\tiny$|h|=1$}}|f'(x)h|$,
and where $J_f(x) = detf'(x)$;
%\item {\it quasiconformal} iff $f:D \rightarrow f(D)$ is a quasiregular homeomorphism;
\item {\it quasimeromorphic} ($qm$) iff $N^n = \mathbb{S}^n$,
where $\mathbb{S}^n$ is usually identified with
$\widehat{\mathbb{R}^n} = \mathbb{R}^n \cup \{\infty\}$ endowed with the {\it spherical metric}. % \,\{\infty\}$ is quasiregular, and
\end{enumerate}
The smallest number $K$ that satisfies condition (b) above is called
the {\it outer dilatation} of \nolinebreak[4]$f$.
\end{defn}

 Using the arguments of, say, \cite{s1} we obtain the following

\begin{cor} \label{cor:qm-mmsp}
Any smooth, compact metric measure  space $(M^n,d,\nu)$ satisfying ${\rm CD}(K,N)$ admits a non-constant quasimeromorphic mapping $f:M^n
\rightarrow \mathbb{S}^n$. %\widehat{\mathbb{R}^n}$.
\end{cor}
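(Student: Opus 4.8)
The plan is to feed the thick triangulation produced in Proposition \ref{prop:thick-triang} into the branched-covering machinery of Alexander, Martio--Srebro and \cite{s1}: the fatness bound is precisely the hypothesis that upgrades a merely topological branched cover of $\mathbb{S}^n$ into a quasiregular one, so once the triangulation is in hand the corollary becomes essentially a transcription of the arguments of \cite{s1}, which are insensitive to the presence of $\nu$.

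First I would invoke Proposition \ref{prop:thick-triang} to fix a $\varphi^*$-thick triangulation $\mathcal{T} = \{\sigma_i\}$ of $M^n$ with $\varphi^* = \varphi^*(n,d,\nu) > 0$. I then realize the target $\mathbb{S}^n$ as the \emph{double} of a fixed regular Euclidean $n$-simplex $\Delta$, i.e.\ as two copies $\Delta_+,\Delta_-$ glued along $\partial\Delta$; endowed with the spherical metric this double is bi-Lipschitz to $\mathbb{S}^n$ and carries an obvious PL structure. After passing, if necessary, to a subdivision of $\mathcal{T}$ admitting a proper two-colouring of its top-dimensional simplices (so that any two $n$-simplices sharing an $(n-1)$-face receive opposite colours), I define $f:M^n \to \mathbb{S}^n$ simplex-by-simplex: each ``black'' simplex maps affinely onto $\Delta_+$ and each ``white'' simplex affinely onto $\Delta_-$, with vertex orderings matched so that the pieces agree on common faces. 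This is the classical Alexander map; it is a simplicial branched covering whose branch set lies in the $(n-2)$-skeleton of $\mathcal{T}$.

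The analytic heart is then the dilatation estimate. On the interior of each $n$-simplex $f$ is affine, hence differentiable, and the outer dilatation of an affine bijection from a $\varphi^*$-thick simplex onto the fixed model $\Delta$ is bounded by a constant $K = K(n,\varphi^*)$ depending only on the dimension and the thickness (this is where the equivalence, recorded after Definition 4.8, between thickness and dihedral angles bounded away from zero is used). Since \emph{every} simplex of $\mathcal{T}$ is $\varphi^*$-thick, the same $K$ serves uniformly, so $f$ satisfies $0 < |f'(x)|^n \le K\,J_f(x)$ almost everywhere; being manifestly locally Lipschitz and surjective, in particular non-constant, $f$ is quasimeromorphic in the required sense. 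The measure $\nu$ enters only through the value of $\varphi^*$, so the map inherits the ``$\nu$-sensitivity'' noted in the proof of Proposition \ref{prop:thick-triang}.

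The main obstacle is not the analysis on the top-dimensional simplices but the combinatorial bookkeeping at the branch set together with an orientability requirement: one must secure the two-colourability (by a canonical subdivision), check that the local index of $f$ along the $(n-2)$-skeleton is finite, and verify that $f$ is well defined and orientation-coherent there, which forces $M^n$ to be orientable for the map into the oriented sphere $\mathbb{S}^n$ to be a genuine branched covering. I would therefore either assume orientability outright (or pass to the orientation double cover); granting that, the verification that $f$ is a branched covering of uniformly bounded dilatation is exactly the content of \cite{s1}, and it transfers verbatim to the present setting because the triangulation's metric properties, established in \cite{cms} and in Lemmas \ref{lem:2.2+}--\ref{lem:2.4+}, do not involve $\nu$ beyond the determination of $\varphi^*$.
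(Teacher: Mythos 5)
Your proposal is correct and takes essentially the same route as the paper: the paper's entire ``proof'' is to invoke Proposition \ref{prop:thick-triang} together with the arguments of \cite{s1}, which are exactly the Alexander-map-on-a-thick-triangulation construction you spell out (two-coloured subdivision, folding onto the doubled simplex model of $\mathbb{S}^n$, dilatation bounded in terms of $n$ and $\varphi^*$ since thickness is scale-invariant). Your orientability caveat does not conflict with the paper, whose Definition 4.12 of quasiregular/quasimeromorphic mappings already requires the source manifold to be oriented.
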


%--------------------------------------------------------

\subsubsection{An Application -- The Information Manifold}
The method of triangulation and quasimeromorphic mapping of weighted Riemannian manifolds presented above represents a generalization of a result  classical in {\it information geometry}.  To be somewhat more concrete  we present it briefly here (for more details see, e.g. \cite{AN}, \cite{GMRMT}).

Let  $A$ be a finite set, let $f_i(x)$, $i = 1,2$ be bounded distributions on $A$,  and let $p_i(x) = \frac{f_i(x)}{\sum_A{f_i(x)}}$, viewed as probability densities on $A$.
The {\it relative information} between $p_1$ and $p_2$ (or the {\it Kullback-Leibler divergence}) is defined as

\[KL(p_1\|p_2) = \sum_Ap_1\log{\left(\frac{p_1}{p_2}\right)}\,,\]
represents a generally accepted measure of the divergence between the two given probabilities, but, unfortunately, it fails to be a metric. However, it induces a Riemannian metric on $P(A)$ -- the manifold of probability densities on $A$, namely the {\it Fisher information metric}:

\begin{equation}
g_{{\rm Fischer},p}(\Delta) = KL(p,p+\Delta) = \sum_A{\frac{\Delta(x)^2}{p(x)}}\,,
\end{equation}
where $p \in P(A)$ is given and $\Delta$ represents an infinitesimal perturbation. (Here, to ensure that $p+\Delta$ will also be a probability density,  the following normalization is applied: %$\int_Af(x)dx = 1$ and
$\int_A\Delta(x)dx = 0$.)

\begin{rem}
It turns out (see \cite{AN}) that the Fisher information can be written as Riemannian metric (in standard form) in the following form: $g_{{\rm Fischer},\cdot} = (g_{ij})$, where

\begin{equation}
g_{ij} = E_p\left(\frac{\partial \mathbf{l}}{\partial \theta^i},\frac{\partial \mathbf{l}}{\partial \theta^j}\right)\,,
\end{equation}
where $\theta^1,\ldots,\theta^k, k = |A|$ represent the coordinates on $\sigma_0$, $\mathbf{l} = \log{p}$ is the so called {\it log-likelihood} and $E_p(fg)$ denotes the {\it expectation} of $fg$, $E_p(fg) = \int{fgdp}$.
\end{rem}

The correspondence $p(x) \mapsto  u(x) = 2\sqrt{p(x)}$ maps the probability simplex
$\sigma_0 = \{p(x)\,|\,x \in A, p(x) > 0, \sum_Ap(x) = 1\}$, onto the first orthant of the sphere $S = \sum_A{u(x)^2} = 4$. This mapping preserves the geometry, in the sense that the geodesic distance between $p,q \in \sigma_0)$, measured in the Fisher metric, equal the spherical distance between their images (under the mapping above). Moreover, geodesics are mapped to great circles. (For details and more geometric insight regarding the probability simplex and the map above, see \cite{AN}.)

To summarize: The quasimeromorphic mapping of a weighted Riemannian manifold $(M^n,d,\nu)$ onto the $n$-dimensional unit sphere $\mathbb{S}^n$, represents a generalization of the considerents above in two manners:

(a) It allows for the mapping with controlled and bounded distortion (i.e. $qm$) of a more general class of Riemannian manifolds (with arbitrary metrics) endowed with a variety of (probability) measures, and not just of the standard statistical model;

(b) It permits the reduction to the study of the (geometry of the ) standard simplex in $\mathbb{S}^n$, of the geometry of the whole information manifold, and not just of the probability simplex.  (As far as application of such generalizations are concerned we mention here only those related to {\it signal} and {\it image processing} (see e.g.  \cite{Ge}, \cite{GMRMT}, respectively), and information theory (see, e.g. \cite{AN}).)

To be sure, a number of minor technical details need to be considered, namely the fact that we allow for multidimensional spheres, i.e. we consider also multivariate distributions, as opposed to the simpler notation adopted above; and the use of the standard simplex (first orthant) in the unit sphere, rather than in the sphere of radius 4, which is trivial, giving that dilation is a (elementary) conformal mapping.

%....-----------------------------......................-------------------------------..............

%\marginpar{\bf \tiny See Villani(?) (on inj. rad./cut locus) if you can ensure the local convexity}

\begin{rem}
As we have seen above, bounds on the injectivity (hence convexity) radius exist in terms of curvature (in conjunction with diameter and volume), due to what represent, by now, classical results. This conducts us to formulate the following question: % (problem):

\begin{quest}
Given a smooth metric measure space, is it possible to find bounds on the injectivity radius in terms of the ${\rm CD}(K,N)$ condition?
\end{quest}

In fact, it should be even possible to find bounds on the {\em conjugate radius} (see, e.g. \cite{Be}), since conjugate points appear when the volume density vanishes (see \cite{Pet}, Section 2.4.4) and this is a function of the determinant Jacobian, which is controlled by the Ricci curvature (see \cite{Pet}, Sections 2.4.2 and 6.3.1, \cite{Vi}).

\end{rem}

%\begin{rem} \marginpar{\tiny \bf Is this Needed?}
%A possible, standard (yet inefficient and ``wasteful'') way of ensuring the convexity of the triangulation, would be to sufficiently subdivide each simplex of the triangulation using a ``parallel'' division (see ..?..). Since the manifold we started with is compact, a finite number, globally bounded from above, of such subdivisions, suffices to ensure that each simplex will be contained in a convex neighbourhood. (The finiteness of the process also ensures that the thickness (or ``aspect-ratio'') of the simplices will be preserved.)
%
%\end{rem}

%-----------------------------------

%%%%%%%%%%%%%%%%%%%%%%%%%%%%%%%%%%%%%%%%%%%%%%%%

\subsection{Weak ${\rm CD}(K,N)$ spaces}

Using  Theorem \ref{thm:BG++}, the proofs of the basic lemmas, for metric measure spaces, follow immediately. We do, however, bring below their statements, for the sake of completeness:

% {\tt What here?! -- The statements!! (And proofs' sketches...)}

\begin{lem}[Lemma 2.2 on weak ${\rm CD}(K,N)$ spaces] \label{lem:2.2++}
Let $(X,d,\nu)$ be a compact weak ${\rm CD}(K,N)$ space, $N < \infty$, such that ${\rm Supp}\nu = X$ and such that ${\rm diam}X \leq D$.
Then there exists $n_1 = n_1(K,N,D)$, such that if $\{p_1,\ldots,p_{n_0}\}$ is a minimal $\varepsilon$-net in $X$, then $n_0 \leq n_1$.
\end{lem}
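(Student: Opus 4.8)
The plan is to transcribe the proof of Lemma 2.2 verbatim into the metric measure setting, replacing the Riemannian volume by the reference measure $\nu$ and the classical volume comparison by its measure-theoretic analogue, Theorem \ref{thm:BG++}. First I would exploit the defining property of an efficient packing: by condition (2) of Definition 2.1 the balls $B(p_k,\varepsilon/2)$, $k=1,\ldots,n_0$, are pairwise disjoint and contained in $X$, so additivity of $\nu$ gives
\[
\sum_{k=1}^{n_0}\nu[B(p_k,\varepsilon/2)]\le \nu[X].
\]
Choosing the index $i_0$ for which $\nu[B(p_{i_0},\varepsilon/2)]$ is minimal, and noting that $\nu[X]<\infty$ by compactness while $\nu[B(p_{i_0},\varepsilon/2)]>0$ since $p_{i_0}\in{\rm Supp}\,\nu=X$, this yields the well-defined estimate
\[
n_0\le \frac{\nu[X]}{\nu[B(p_{i_0},\varepsilon/2)]}.
\]

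Next I would bound the denominator from below using Theorem \ref{thm:BG++}. Since the ratio $r\mapsto \nu[B[p_{i_0},r]]/\int_0^r S_K^N(t)\,dt$ is nonincreasing and $\varepsilon/2\le D$, comparing its values at $r=\varepsilon/2$ and $r=D$ and invoking the first assertion of that theorem, $\nu[B(p_{i_0},r)]=\nu[B[p_{i_0},r]]$, gives
\[
\nu[B(p_{i_0},\varepsilon/2)]\ge \nu[B[p_{i_0},D]]\,\frac{\int_0^{\varepsilon/2}S_K^N(t)\,dt}{\int_0^{D}S_K^N(t)\,dt}.
\]
Because ${\rm diam}\,X\le D$ and ${\rm Supp}\,\nu=X$, every point of $X$ lies within distance $D$ of $p_{i_0}$, whence $B[p_{i_0},D]=X$ and $\nu[B[p_{i_0},D]]=\nu[X]$. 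Substituting into the previous bound, the factor $\nu[X]$ cancels and I obtain
\[
n_0\le \frac{\int_0^{D}S_K^N(t)\,dt}{\int_0^{\varepsilon/2}S_K^N(t)\,dt},
\]
so it suffices to set $n_1=\left[\int_0^{D}S_K^N(t)\,dt\,\big/\int_0^{\varepsilon/2}S_K^N(t)\,dt\right]$, a quantity depending only on $K$, $N$ and $D$ (with $\varepsilon$ the fixed net parameter, as in the classical statement).

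I do not expect a genuine obstacle, since Theorem \ref{thm:BG++} supplies precisely the comparison used in the Riemannian case. The one point requiring care, absent in the smooth proof, is the interchange of open and closed balls: the diameter hypothesis naturally produces the \emph{closed} ball $B[p_{i_0},D]$, while the packing disjointness and the monotone ratio are phrased for \emph{open} balls of radius $\varepsilon/2$; the equality $\nu[B(x_0,r)]=\nu[B[x_0,r]]$ from the same theorem reconciles the two and guarantees that no mass is lost on the boundary sphere, which is exactly why that first assertion of Theorem \ref{thm:BG++} is stated alongside the monotonicity. Alternatively, one could obtain the lower bound on $\nu[B(p_{i_0},\varepsilon/2)]$ directly from Corollary \ref{cor:small-balls}, at the cost of a slightly less explicit constant.
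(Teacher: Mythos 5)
Your proposal is correct and follows essentially the same route as the paper: the paper's proof of Lemma \ref{lem:2.2++} consists precisely of transcribing the classical Grove--Petersen argument (disjointness of the balls $B(p_k,\varepsilon/2)$ plus the volume-ratio comparison), with the reference measure $\nu$ in place of ${\rm Vol}$ and Theorem \ref{thm:BG++} in place of the classical Bishop--Gromov inequality. Your additional care with the open/closed ball distinction and the remark on Corollary \ref{cor:small-balls} are details the paper leaves implicit, but they do not change the argument.
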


\begin{rem}
Note that, since $N < \infty$, the condition ${\rm Supp}\nu = X$ imposes no real restriction on $X$ (see \cite{Vi}, Theorem 30.2 and Remark 30.3).
\end{rem}

\begin{lem}[Lemma 2.3 on weak $CD(K,N)$ spaces] \label{lem:2.3++}
Let $(X,d,\nu)$ be a compact weak ${\rm CD}(K,N)$ space, $N < \infty$, such that ${\rm Supp}\nu = X$ and such that ${\rm diam}X \leq D$.
Then there exists $n_2 = n_2(N,K,D)$, such that, for any $x \in M^n$,
$\left|\{j \,|\,\newline j = 1,\ldots,n_0\; {\rm and}\;
\beta^n(x,\varepsilon) \cap \beta^n(p_j,\varepsilon) \neq
\emptyset\}\right| \leq n_2$, for any minimal $\varepsilon$-net
$\{p_1,\ldots,p_{n_0}\}$.
\end{lem}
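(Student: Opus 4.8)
\textbf{Proof plan for Lemma 4.15 (Lemma 2.3 on weak ${\rm CD}(K,N)$ spaces).}
The plan is to transcribe the argument of Lemma 2.3 into the metric measure setting, replacing every appeal to the classical Bishop--Gromov volume comparison by an appeal to Theorem \ref{thm:BG++}. First I would fix $x \in X$ and let $j_1,\ldots,j_s$ index exactly those net points with $\beta^n(x,\varepsilon) \cap \beta^n(p_{j_i},\varepsilon) \neq \emptyset$; the goal is to bound $s$ independently of $\varepsilon$ and of the chosen net. The elementary triangle-inequality step carries over verbatim, since it uses only the metric $d$ and not the measure: from the intersection condition one gets $d(x,p_{j_i}) < 2\varepsilon$, hence $B(p_{j_i},\varepsilon/2) \subset B(x,5\varepsilon/2)$, and these balls are pairwise disjoint by condition 2.1.(2) of the efficient packing.

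Next I would choose the index $k$ for which $\nu[B(p_{j_k},\varepsilon/2)]$ is minimal and estimate
\[
s \;\leq\; \frac{\nu[B(x,5\varepsilon/2)]}{\nu[B(p_{j_k},\varepsilon/2)]}\;,
\]
using disjointness and the fact that all $s$ balls sit inside $B(x,5\varepsilon/2)$. One then enlarges $B(x,5\varepsilon/2) \subset B(p_{j_k},9\varepsilon/2)$ (again pure triangle inequality, since $d(x,p_{j_k}) < 2\varepsilon$) to obtain a ratio of concentric balls about a single point $p_{j_k}$, namely $\nu[B(p_{j_k},9\varepsilon/2)]/\nu[B(p_{j_k},\varepsilon/2)]$. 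At this point Theorem \ref{thm:BG++} applies: the function $r \mapsto \nu[B(p_{j_k},r)]/\int_0^r S_K^N(t)\,dt$ is nonincreasing, so the measure ratio is bounded above by the corresponding ratio of the model integrals,
\[
\frac{\nu[B(p_{j_k},9\varepsilon/2)]}{\nu[B(p_{j_k},\varepsilon/2)]}
\;\leq\;
\frac{\int_0^{9\varepsilon/2} S_K^N(t)\,dt}{\int_0^{\varepsilon/2} S_K^N(t)\,dt}\;.
\]
Finally I would check that the right-hand side, as a function of $\varepsilon$, extends to a continuous function on $[0,D]$ exactly as in the proof of Lemma 2.3 (the quotient tends to a finite limit as $\varepsilon \to 0$ by the homogeneity of $S_K^N$ near the origin), and take $n_2$ to be the supremum of this continuous function over $[0,D]$; this yields the desired bound $s \leq n_2 = n_2(N,K,D)$, uniform in $\varepsilon$ and in the net.

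The step requiring the most care is the passage to a ratio of \emph{concentric} balls centered at the single point $p_{j_k}$, because Theorem \ref{thm:BG++} is a statement about balls about one fixed center, whereas the naive estimate compares $B(x,5\varepsilon/2)$ with $B(p_{j_k},\varepsilon/2)$ about two different centers. This is handled by the inclusion $B(x,5\varepsilon/2) \subset B(p_{j_k},9\varepsilon/2)$, so the only genuine subtlety is bookkeeping the radii correctly through the triangle inequality; everything else is a direct substitution of $\nu$ for ${\rm Vol}$ and $S_K^N$ for $S_K^n$. I would therefore state the proof tersely, noting that it follows the lines of the proof of Lemma 2.3 with the classical Bishop--Gromov comparison replaced by Theorem \ref{thm:BG++}, and that, exactly as recorded in Remark 2.6, the resulting $n_2$ is independent of $\varepsilon$.
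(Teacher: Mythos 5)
Your proposal is correct and follows essentially the same route as the paper, which simply notes that the classical proof of Lemma 2.3 carries over verbatim once the classical Bishop--Gromov comparison is replaced by Theorem \ref{thm:BG++} (the hypothesis ${\rm Supp}\,\nu = X$ guaranteeing the theorem applies at $p_{j_k}$ and that the denominator $\nu[B(p_{j_k},\varepsilon/2)]$ is positive). You even silently correct the paper's minor slip in the classical argument, where the limit of the quotient as $\varepsilon \to 0$ is the finite value given by homogeneity of $S_K^N$ near $0$, not $0$ as stated there.
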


\begin{lem}[Lemma 2.4 on weak $CD(K,N)$ spaces] \label{lem:2.4++}
Let $(M_1^n,d_1,\nu_1)$ and $(M_2^n,d_2,\nu_2)$ be as in Lemma \ref{lem:2.2++}.
and let $\{p_1,\ldots,p_{n_0}\}$ and $\{q_1,\ldots,q_{n_0}\}$ be minimal $\varepsilon$-nets with the same
intersection pattern, on $M_1^n$, $M_2^n$, respectively. Then
there exists a constant $n_3 = n_3(N,K,D,C)$, such that if
$d_1(p_i,p_j) < C\cdot\varepsilon$, then $d_2(q_i,q_j) <
n_3\cdot\varepsilon$.
\end{lem}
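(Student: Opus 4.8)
The plan is to mirror the classical proof of Lemma 2.4 exactly, substituting the generalized Bishop--Gromov inequality (Theorem \ref{thm:BG++}) for the Riemannian one, and using the uniform lower volume bound supplied by Corollary \ref{cor:small-balls} to transfer the packing count between the two spaces. The key observation is that the entire argument only uses volume-of-balls comparisons, and both spaces share the same parameters $K, N, D$, so the comparison functions $S_K^N$ coincide.

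First I would fix the minimal $\varepsilon$-net $\{p_1,\ldots,p_{n_0}\}$ on $M_1^n$ and suppose $d_1(p_i,p_j) < C\cdot\varepsilon$. As in the proof of Lemma 2.3, the balls $\beta^n(p_\ell,\varepsilon/2)$ that meet $\beta^n(p_i,(C+\tfrac12)\varepsilon)$ are disjoint (by condition 2.1(2)) and all contained in a single ball of radius $(C+1)\varepsilon$ about $p_i$. Counting them via Theorem \ref{thm:BG++} gives a bound $n' = n'(K,N,C)$ on how many net-points can lie within distance $C\varepsilon$ of $p_i$, uniform over both spaces since the comparison integrand $S_K^N$ depends only on $K,N$. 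Because $\{q_1,\ldots,q_{n_0}\}$ has the \emph{same intersection pattern} on $M_2^n$, the combinatorial adjacency relations are preserved, so the $q$-points corresponding to net-points near $p_i$ form a chain of overlapping $\varepsilon$-balls of bounded length. Each step in such a chain moves the center by at most $2\varepsilon$ (two overlapping $\varepsilon$-balls have centers within $2\varepsilon$), so following a chain of length at most $n'$ yields $d_2(q_i,q_j) < n_3\cdot\varepsilon$ with $n_3 = n_3(N,K,D,C)$ of the form $2[n'(C)-1]$, exactly as in the original.

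The one genuinely new point, relative to the smooth case, is that one must verify the volume-counting step is legitimate in the metric measure setting: the disjointness of the half-balls in condition 2.1(2) is a metric statement and carries over verbatim, and the containment and comparison steps use only Theorem \ref{thm:BG++} together with the lower bound on the measure of small balls from Corollary \ref{cor:small-balls}. Since $N < \infty$ and $\operatorname{Supp}\nu = X$, both tools are available on $M_1^n$ and $M_2^n$ with the \emph{same} constants, which is what allows the bound derived on $M_1^n$ to be exported to $M_2^n$.

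The main obstacle, and the only place care is required, is that in a general weak ${\rm CD}(K,N)$ space the distance transfer is purely combinatorial: unlike the smooth case there is no exponential map or geodesic interpolation to lean on, so the estimate $d_2(q_i,q_j) < n_3\varepsilon$ must be obtained entirely by chaining overlapping balls through the shared intersection pattern rather than by any direct metric comparison between the two spaces. Provided one is content with the (admittedly crude) triangle-inequality bound along such chains — as in Grove--Petersen — no further analytic input is needed, and the proof reduces, as the authors indicate, to invoking Theorem \ref{thm:BG++} along the lines of the original Lemma 2.4.
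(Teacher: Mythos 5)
Your proposal is correct and takes essentially the same route as the paper: the paper's proof consists precisely of the observation that, once Theorem \ref{thm:BG++} is substituted for the classical Bishop--Gromov inequality, the Grove--Petersen argument for Lemma 2.4 (the packing count $n'(C)$ followed by transferring a chain of consecutively overlapping $\varepsilon$-balls through the common intersection pattern, giving $n_3 = 2[n'(C)-1]$) goes through unchanged. One minor correction to your final paragraph: weak ${\rm CD}(K,N)$ spaces are geodesic by definition, so the minimal geodesic from $p_i$ to $p_j$ in $M_1^n$ --- which is what produces the chain of overlapping $\varepsilon$-balls whose existence you assert --- is available exactly as in the smooth case; only the exponential map is missing, and it is never needed.
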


The existence of the triangulation follows now immediately, since, by definition, weak ${\rm CD}(K,N)$ spaces are geodesic. Moreover, in {\it nonbranching} spaces, the geodesics connection two vertices of the triangulation are unique a.e. (see \cite{Vi}, Theorem 30.17). Recall that a geodesic metric  space $X$ is called nonbranching iff any two geodesics $\gamma_1, \gamma_2: [0,t] \rightarrow X$ that coincide on a subinterval $[0,t_0], 0 < t_0 < t$, coincide on $[0,t]$.

However, it is hard to ensure the convexity of the triangulation.
%One easy ``way out'' would be, of course, to postulate it, that is to restrict one's attention only to those weak ${\rm CD}(K,N)$ spaces that are also locally convex. %Moreover, while this is a widely accepted mathematical practice, it amounts to ``escapism''...
Sadly, many weak ${\rm CD}(K,N)$ spaces of interest fail to be locally convex. %-- see \cite{BS}.
Fortunately, local convexity does hold for an important class of metric measure spaces: Indeed, by \cite{Ptr},  ${\rm Alex}[K] \subset {\rm CD}((m-1)K,m)$, where ${\rm Alex}[K]$ denotes the class of $m$-dimensional {\it Alexandrov spaces} with curvature $\geq K$, equipped with the volume measure.\footnote{In fact, Petrunin provides the full details of the proof only for the case $K=0$.} But, by \cite{PP}, Lemma 4.3, any point in Alexandrov space of curvature $\geq K$ has a compact neighbourhood which is also an Alexandrov space of curvature $\geq K$.

\begin{rem}
The result of Perelman and Petrunin is, in fact, a bit stronger, in the sense that the diameter of the said compact neighbourhood is specified. On the other hand, their proof uses the rather involved (even if by now standard) tool of Gromov-Hausdorff convergence (of Alexandrov spaces), amongst others.

An alternative, more intuitive (at least in dimension 2) proof can be given, using the notion of {\it Wald-Berestovskii  curvature} (see \cite{b-m:70}, \cite{pla:96}). However, since we do not want to further encumber the reader with more definitions, and since Alexandrov spaces represent by now a widely  accepted tool in Geometry, that generates important research on its own, we shall bring the full details of this proof, and of the whole construction, elsewhere.
\end{rem}

%\begin{quest}
%Is the following line of reasoning correct? (It seems it is, but check Plaut, Blumenthal.)
%
%\begin{itemize}
%
%\item By \cite{Ptr},  ${\rm Alex}[K] \subset {\rm CD}((m-1)K,m)$, where ${\rm Alex}[K]$ denotes the class of $m$-dimensional Alexandrov spaces with curvature $\geq K$, equipped with the volume measure.
%
%\item Alexandrov spaces are equivalent to Rinow spaces (??!!?? -- see Plaut!!!)
%\marginpar{\tiny \bf Needed? - Check Plaut.}
%
%\item Rinow spaces are equivalent to modified-Wald spaces (see Kirk, Blumenthal).
%
%\item Wald spaces with locally pinched curvature (i.e., $-K_W(Q) \leq \frac{c}{\rho^2} \leq K_W(Q))$, for any {\em metric quadruple} $Q \in B(x,\rho)$, for any $x$) are locally geodesically convex. (See Blumenthal)
%
%\item Spaces as above are manifolds. (See Blumenthal)
%
%\item Ergo...
%
%\begin{rem}
%B.t.w., note that {\em spaces with curvature} $\leq K$ are postulated to be locally ${\rm CAT}(K)$ {\bf and} locally geodesically convex!!...
%\end{rem}
%
%\end{itemize}
%
%
%\end{quest}

%------------------------------------------------------

\begin{rem}
Since the balls $\beta^n(p_k,\varepsilon)$ cover $M^n$, any sequence $\mathcal{P}(\varepsilon_m)$, of efficient packings such that $\varepsilon_m \rightarrow 0$ when $m \rightarrow \infty$, generates a {\it discretization} $(X_m,d,\nu_m)$, in the sense of \cite{BS}: Consider the Dirichlet (Voronoi) cell complex (tesselation) $\mathfrak{C}_m = \{C(p_{m,k})_k\}$  of centers $p_{k,m}$ and atomic masses $\nu_m(p_{m,k}) = \nu[C(p_{m,k})]$. Then taking $X_m = \{p_{m,k}\}$, $d$ the original metric of $X$ and $\nu_m$ as defined, provides us with the said discretization.

It follows, by \cite{BS}, Theorem 4.1, if ${\rm Vol}(M^n) < \infty$, the sequence $(X_m,d,\nu_m)$ converges in the $W_2$  (see \cite{St}, \cite{Vi}) to a metric measure space and, moreover, if $(X,d,\nu)$ is a weak ${\rm CD}(K,N)$ space, then, for small enough $\varepsilon$, so will be $(X_m,d,\nu_m)$,  but only in a generalized (``{\it rough}'') sense. (For details regarding the precise definition of rough curvature bounds and the proof of this and other related results, see \cite{BS}.)
\end{rem}

%\begin{rem}
%
%\end{rem}

%\begin{quest}
%Is this triangulation good to prove that Curv.-local $\Longrightarrow$ Curv-global?
%
%$\triangleright$ It seems that it does. (Use Gromov-Hausdorff on $\varepsilon$-nets and intrinsic properties.)
%\end{quest}
%
%However, first on has to address the following question:
%
%\begin{quest}
%Is this curvature intrinsic?!?
%
%$\triangleright$ It seems so (because it depends on $d$ and $\mu$, not on (any) embedding.
%\end{quest}

%However, we still have

%\begin{quest}
%What sense of ${\rm curv_{local} \longrightarrow curv_{global}}$, when this definition is based on comparison geometry?!...
%
%$\triangleright$ True, ${\rm CD}(K,N)$ condition is local (see \cite{H}), but...)
%
%\begin{quest}
%Could this be a way to improve on the $K$ bound?!...
%\end{quest}
%
%\end{quest}

%--------------------------------------------------------------------------

\section{Discretizations and volume growth rate}

%\marginpar{\tiny \bf Use the Lemmas to show that two {\it discretizations} of weighted manifolds (in the sense of \cite{Ch}) are roughly(?)-isometric.}

The compactness condition imposed in the previous sections stems from the need for finding estimates in Lemmas 2.2-2.4 (and their respective generalizations) and as a basic requirement for Theorem 2.8 to hold. However, the basic geometric method employed for obtaining efficient packings holds for noncompact manifolds, as well (given a lower bound on the injectivity radius).

Therefore, we discard in this section the compactness restriction, and concentrate, following Kanai \cite{Ka} (see also \cite{Ch}) on those properties of $\varepsilon$-nets that hold also on unbounded manifolds, and mainly on the volume growth rate.

Amongst these properties, the most basic is the one contained in condition (2) of Definition 2.1, that is that if $p_i, p_j \in \mathcal{N}, i\neq j$, then $d(p_i,p_j) \leq 2\varepsilon$. We also impose the additional condition that the set $\mathcal{N}$, satisfying the condition above, be maximal with respect to inclusion.
%(For instance, choose, on any relatively compact submanifold, $n_0 = n_1$, where $n_0,n_1$ are as in Lemma 2.3.)

Following Kanai \cite{Ka}, we call the graph $G(\mathcal{N})$ obtained as in Section 2 above (i.e the 1-skeleton of the simplicial complex constructed therein) a {\it discretization} of $X$, with {\it separation} $\varepsilon$ (and {\it covering radius} $\varepsilon$) (or a $\varepsilon$-separated net). Further more, we say that $G(\mathcal{N})$ has {\it bounded geometry} iff there exists $\rho_0 > 0$, such that $\rho(p) \leq \rho_0$, for any vertex $p \in \mathcal{N}$, where $\rho(p)$ denotes the degree of $p$ (i.e. the number of neighbours of $p$).

\begin{rem}
Note that Kanai's discretization is different from the one considered in \cite{BS}. Evidently, there exists a close connection between the two approaches: Instead of the combinatorial length on the edges of the graph $G(\mathcal{N})$, consider the length of the geodesics (between the adjacent vertices). If one restricts himself to smooth metric measure spaces, then this graph can actually be embedded in $M^n$ (see remark after Definition 2.5). The Voronoi cell complex construction follows, of course, without any change. (One can adopt even a semi-discrete approach: Using combinatorial lengths for the edges of the graph and atomic measures (weights) for the vertices (i.e the nodes of the graph) equal to the measure of the corresponding Voronoi cell.)

Moreover, the graph $G$ considered above, together with the geodesic metric $d_g$ induced by the metric of $M^n$, represents a geodesic metric space and, as such, it possess, at each vertex $v$, a (not necessarily unique) {\it metric curvature}\footnote{See \cite{b-m:70}.} $\kappa(v)$ (e.g. the Wald-Berestovskii curvature mentioned above). Therefore, a natural question arises in connection with the discretization discussed in Remark 4.21:\footnote{ See also \cite{BS}, Example 4.4 and Section 5.}

\begin{quest}
What is the relation (for $\varepsilon$ small enough) between $\kappa_v$ above, and the rough curvature bound of the corresponding space $(X_m,d,\nu_m)$ from Remark 4.21?
\end{quest}

\end{rem}

\begin{rem}
Note that in \cite{Ka} a more general covering radius is considered. For the cohesiveness of the paper we have used $\varepsilon$ as covering radius. However, the proofs of all results using covering radius hold for any $R$.
\end{rem}

We start (presenting the results) with the following lemma:

\begin{lem}
Let $(X,d,\nu)$ be a (weak) ${\rm CD}(K,N)$ space, $K \leq 0, N < \infty$, and let $\mathcal{N}$ be a  $\varepsilon$-separated net. Then

\begin{enumerate}
\item
\[|\mathcal{N} \cap B(x,r)| \leq \frac{\int_0^{2r+\varepsilon/2}{S_K^N(t)}dt}{\int_0^{\varepsilon/2}{S_K^N(t)}dt}\,,\]

\item
\[\rho(p) \leq \frac{\int_0^{4r+\varepsilon/2}{S_K^N(t)}dt}{\int_0^{\varepsilon/2}{S_K^N(t)}dt}\,;\]
\end{enumerate}
for any $x \in X$ and $r > 0$.
\end{lem}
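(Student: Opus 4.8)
The plan is to treat both inequalities as volume-comparison (packing) estimates resting entirely on the generalized Bishop--Gromov inequality for weak ${\rm CD}(K,N)$ spaces (Theorem \ref{thm:BG++}), in the same spirit as the proof of Lemma 2.2. The single structural input from the net is its \emph{separation}: distinct points of $\mathcal{N}$ lie at distance at least $\varepsilon$ apart, so the open balls $B(p_i,\varepsilon/2)$ are pairwise disjoint. The hypothesis $K\le 0$ is what guarantees that $S_K^N(t)$ is defined and positive for every $t>0$, so that the integrals $\int_0^{2r+\varepsilon/2}S_K^N$ appearing in the bounds are finite and nonzero for arbitrarily large $r$ (for $K>0$ one would have to restrict $r$ by the diameter of the model space).

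For part (1), I would write $m=|\mathcal{N}\cap B(x,r)|$ and let $p_1,\dots,p_m$ be the net points lying in $B(x,r)$. By separation the balls $B(p_i,\varepsilon/2)$ are disjoint, and since each $p_i\in B(x,r)$ they are all contained in $B(x,r+\varepsilon/2)$; hence $\sum_{i=1}^m \nu[B(p_i,\varepsilon/2)]\le \nu[B(x,r+\varepsilon/2)]$. Choosing the index $i_0$ for which $\nu[B(p_{i_0},\varepsilon/2)]$ is \emph{minimal} (exactly as in the proof of Lemma 2.2) gives $m\cdot\nu[B(p_{i_0},\varepsilon/2)]\le \nu[B(x,r+\varepsilon/2)]$. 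The triangle inequality then yields the inclusion $B(x,r+\varepsilon/2)\subseteq B(p_{i_0},2r+\varepsilon/2)$, since $d(x,p_{i_0})<r$, so
\[
m \le \frac{\nu[B(p_{i_0},2r+\varepsilon/2)]}{\nu[B(p_{i_0},\varepsilon/2)]}\,.
\]
Applying Theorem \ref{thm:BG++} at the single point $p_{i_0}$, with radii $\varepsilon/2\le 2r+\varepsilon/2$, the monotonicity of $R\mapsto \nu[B[p_{i_0},R]]/\int_0^R S_K^N(t)\,dt$ converts this measure ratio into the claimed ratio of model integrals; here one invokes the first assertion of Theorem \ref{thm:BG++}, that open and closed balls carry equal $\nu$-measure, to suppress the open/closed distinction.

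For part (2), I would recall that in the graph $G(\mathcal{N})$ two vertices are joined precisely when the corresponding balls $\beta^n(\cdot,\varepsilon)$ meet, i.e. when their centers lie at distance less than $2\varepsilon$; thus every neighbour of $p$ is contained in $B(p,2\varepsilon)$. Applying part (1) with $x=p$ and ball-radius $2\varepsilon$ (the covering radius of the net) bounds $\rho(p)$ by $\int_0^{4\varepsilon+\varepsilon/2}S_K^N \big/ \int_0^{\varepsilon/2}S_K^N$, which is exactly the stated estimate once the generic radius is read as the net's covering radius.

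I do not anticipate a genuine obstacle: the whole argument is a routine packing estimate. The only steps demanding care are the two geometric inclusions extracted from the triangle inequality and the device of passing to the ball of minimal measure, which is precisely what allows a single application of Bishop--Gromov at $p_{i_0}$ to replace all the individual ball-volumes by one clean ratio of model integrals.
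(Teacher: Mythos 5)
Your proposal is correct and takes essentially the same route as the paper: the paper's own proof merely points back to the packing arguments of Lemmas 2.3--2.4 (in their generalized form, via the Bishop--Gromov inequality for weak ${\rm CD}(K,N)$ spaces, Theorem \ref{thm:BG++}), and that is exactly the minimal-measure-ball plus volume-comparison argument you write out in full. Your reading of the radius in part (2) as the net's covering radius is also the intended one, consistent with the paper's remark that Kanai's covering radius may be taken to be any $R$.
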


\begin{proof}
By Lemma \ref{lem:2.3++} and Corollary \ref{cor:small-balls}, $|\mathcal{N} \cap B(x,r)|$  is finite. The precise bounds (1) and (2) are obtained in the course of the proofs of \ref{lem:2.4++} and \ref{lem:2.3++}, (or rather in their classical versions) respectively.
\end{proof}

\begin{defn}[Rough isometry]
Let $(X,d)$ and $(Y,\delta)$ be two metric spaces, and let $f:X \rightarrow Y$ (not necessarily continuous).
$f$ is called a {\em rough isometry} iff

\begin{enumerate}
\item There exist $a \geq 1$ and $b > 0$, such that
\[\frac{1}{a}d(x_1,x_2) - b \leq \delta(f(x_1),f(x_2)) \leq ad(x_1,x_2) + b\,,\]
%
%and
\item there exists $\varepsilon_1$ such that
\[\bigcup_{x \in X}{B(f(x),\varepsilon_1 > 0)}= Y\,;\]
(that is $f$ is $\varepsilon_1$-{\it full}.)
\end{enumerate}

\end{defn}

\begin{rem}
\begin{enumerate}
\item
Rough isometry represents an equivalence relation.
\item
If ${\rm diam}(X), {\rm diam}(Y)$ are finite, then $X$, $Y$ are roughly isometric.
\end{enumerate}
\end{rem}

The basic result of this section is the following generalization of \cite{Ka} Lemma 2.5 (see also \cite{Ch}, Theorem 4.9).

\begin{thm}
Let $(X,d,\nu)$ be a weak ${\rm CD}(K,N)$ space and let $G$ be a discretization of $X$. Then $(X,d)$ and $(G,\mathbf{d})$, where $\mathbf{d}$ is the combinatorial metric, are roughly isometric.
\end{thm}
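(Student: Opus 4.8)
The plan is to prove the statement by exhibiting one explicit rough isometry rather than verifying the abstract definition for an unspecified map; the argument is the metric–measure analogue of Kanai's comparison lemma. The natural candidate is the inclusion $\iota\colon(\mathcal N,\mathbf d)\hookrightarrow(X,d)$ of the vertex set of $G$ into $X$, where on the source we keep the combinatorial metric $\mathbf d$ and on the target the original metric $d$. With this choice condition (2) of the definition of rough isometry (fullness) is immediate: since $\mathcal N$ is an $\varepsilon$-net, the balls $B(p,\varepsilon)$, $p\in\mathcal N$, cover $X$, so $\bigcup_{p\in\mathcal N}B(\iota(p),\varepsilon)=X$ and $\iota$ is $\varepsilon$-full. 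Everything therefore reduces to the two-sided comparison in condition (1): producing constants $a\ge1$, $b>0$ with $\tfrac1a\,\mathbf d(p,q)-b\le d(p,q)\le a\,\mathbf d(p,q)+b$ for all $p,q\in\mathcal N$. (If $X$ is compact the statement is vacuous by the Remark recording that finite-diameter spaces are roughly isometric; the content lies entirely in the noncompact case, which is why the compactness hypothesis was dropped at the start of this section.)

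The upper estimate $d(p,q)\le 2\varepsilon\,\mathbf d(p,q)$ is the easy half. I would take a combinatorial geodesic $p=m_0\sim m_1\sim\dots\sim m_\ell=q$ in $G$, so $\ell=\mathbf d(p,q)$ and consecutive vertices are adjacent; by construction of $G$, adjacency means $B(m_i,\varepsilon)\cap B(m_{i+1},\varepsilon)\neq\emptyset$, whence $d(m_i,m_{i+1})<2\varepsilon$. Summing with the triangle inequality gives $d(p,q)\le\sum_i d(m_i,m_{i+1})<2\varepsilon\,\ell=2\varepsilon\,\mathbf d(p,q)$.

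The lower estimate, equivalently a bound $\mathbf d(p,q)\le \tfrac{C}{\varepsilon}\,d(p,q)+C$, is the crux. Here I would use that a weak ${\rm CD}(K,N)$ space is geodesic, choose a minimizing geodesic $\gamma\colon[0,L]\to X$ from $p$ to $q$ with $L=d(p,q)$, and walk along $\gamma$ producing a chain of net points that are pairwise adjacent in $G$. Concretely: set $m_0=p$, and having reached $m_j$ with $\gamma(s_j)\in B(m_j,\varepsilon)$, let $s_{j+1}=\sup\{t\ge s_j:\gamma(t)\in B(m_j,\varepsilon)\}$ and pick $m_{j+1}\in\mathcal N$ covering $\gamma(s_{j+1})$. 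For small $\delta$ the point $\gamma(s_{j+1}-\delta)$ lies in both $B(m_j,\varepsilon)$ and $B(m_{j+1},\varepsilon)$, so $m_j\sim m_{j+1}$; since $s_j$ is strictly increasing the walk terminates at $q$. The number of steps is controlled by bounded geometry: each $m_j$ lies in the $\varepsilon$-tube around $\gamma$, and covering $\gamma$ by the $\lceil L/\varepsilon\rceil$ balls $B(\gamma(i\varepsilon),2\varepsilon)$ together with the preceding lemma bounding $|\mathcal N\cap B(x,r)|$ (itself a consequence of the generalized Bishop–Gromov Theorem \ref{thm:BG++} and the resulting local doubling) shows that each such ball meets only $C(K,N)$ elements of the $\varepsilon$-separated set $\mathcal N$. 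Hence $\mathbf d(p,q)\le C(K,N)\lceil L/\varepsilon\rceil\le \tfrac{C(K,N)}{\varepsilon}\,d(p,q)+C(K,N)$.

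Combining the two estimates yields condition (1) with, for instance, $a=\max\{2\varepsilon,\,C(K,N)/\varepsilon,\,1\}$ and a suitable $b>0$, together with fullness radius $\varepsilon_1=\varepsilon$, which proves that $\iota$ is a rough isometry. The main obstacle is precisely this lower estimate: one must convert metric proximity in $X$ into combinatorial proximity in $G$ at a linear rate, and the only available mechanism is the uniform bound on the number of $\varepsilon$-separated points in a ball, i.e. the ${\rm CD}(K,N)$ condition entering through Bishop–Gromov and doubling. A subtlety to watch is that $\gamma$, being minimizing but living in a merely geodesic (possibly non-convex, branching) space, may re-enter a given ball $B(m,\varepsilon)$; this is why I would bound the chain by counting separated net points in the tube rather than by trying to bound the arc length gained at each step.
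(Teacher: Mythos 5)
Your proposal is correct and follows essentially the same route as the paper: the easy bound $d(p,q)\le 2\varepsilon\,\mathbf{d}(p,q)$ by summing along a combinatorial geodesic, and the reverse bound by taking a minimizing geodesic $\gamma$ in $X$, working in its $\varepsilon$-tube, covering $\gamma$ by balls centered at equidistant points, and invoking the net-counting lemma (Lemma 5.2, i.e.\ the generalized Bishop--Gromov inequality) to bound the number of net points per ball. The only difference is that where the paper cites Chavel for the existence of a combinatorial path joining $p$ and $q$ inside the tube, you construct that chain explicitly by the exit-point walk along $\gamma$, which makes the argument self-contained but is not a different method.
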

%\marginpar{\tiny \bf What about G with the induced metric? (Or any other metric?!...)}

The proof closely follows the one for the classical case, except for a necessary adaptation:

\begin{proof}
By the construction of $\mathcal{N}$ and $G(\mathcal{N})$, the following evidently holds:

\begin{equation} \label{eq:lowerbound}
d(p_1,p_2) \leq 2\varepsilon \mathbf{d}(p_1,p_2),\; p_1,p_2 \in \mathcal{N}\,,
\end{equation}
which gives the required lower bound (with $a = 1/2\varepsilon$ and without $b$) without any curvature constraint.

To prove the second inequality, one has, however, to assume that $(X,d,\nu)$ is a weak ${\rm CD}(K,N)$.

Then, given $p_1,p_2 \in \mathcal{N}$, chose a minimal geodesic $\gamma \subset G(\mathcal{N})$ with ends $p_1$ and $p_2$. Put $T_\gamma = \{y \in X\,|\,d(y,\gamma) < \varepsilon\}$ and denote $\mathcal{N}_\gamma = \mathcal{N} \cap T_\gamma$. Then there exists a path in $G(\mathcal{N})$ connecting $p_1$ and $p_2$ and contained in $\mathcal{N}_\gamma$ (see \cite{Ch}, p. 196). Therefore

\[\mathbf{d}(p_1,p_2) \leq |\mathcal{N}_\gamma|\,.\]

Let $m$ be such that $(m-1)\varepsilon \leq d(p_1,p_2) < \varepsilon$, and let $p_1=q_0,\ldots,q_m=p_2$ equidistant points on $\gamma$. Then

\[d(q_{i-1},q_i) = \frac{d(p_1,p_2)}{m} < \varepsilon\,.\]

From Lemma 5.2 it follows that

\[|\mathcal{N}_\gamma| \cap B(q_i,2\varepsilon) \leq C = C(K,N,\varepsilon))\,.\]

Moreover,

\[\mathcal{N}_\gamma \subseteq \bigcup_{i=0}^{m}B(q_i,2\varepsilon)\,.\]

We obtain, after a few easy manipulations that

\[|\mathcal{N}_\gamma| \leq \frac{C_0}{\varepsilon}d(p_1,p_2) + C_1\,,\]
where $C_0,C_1$ are constants.

\end{proof}

\begin{rem}
Holopainen \cite{Ho} has shown that if one only requires that $d(p_1,p_2) \leq 3\varepsilon$, instead of $\leq 2\varepsilon$, as in Kanai's original construction, then the theorem above can be proved by purely metric methods, without any curvature constraint. However, we have included here Kanai's version, for its Ricci curvature ``flavor''. Also, as already noted above, the construction is, in this case, geometrically natural.
\end{rem}

By Remark 5.3 (2) above, immediately follows

\begin{cor}
Any two discretizations of a weak ${\rm CD}(K,N)$ space are roughly isometric.
\end{cor}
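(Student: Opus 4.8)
The plan is to derive this immediately from the Theorem just proved, together with the fact, recorded above, that rough isometry is an equivalence relation. Let $G_1$ and $G_2$ be two discretizations of the weak ${\rm CD}(K,N)$ space $(X,d,\nu)$, arising respectively from an $\varepsilon_1$- and an $\varepsilon_2$-separated net, and equipped with their combinatorial metrics $\mathbf{d}_1$ and $\mathbf{d}_2$. The separations $\varepsilon_1,\varepsilon_2$ need not agree, but this plays no role in the argument, since the preceding Theorem holds for a discretization of arbitrary separation.

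First I would apply the preceding Theorem once to each discretization: since $(X,d,\nu)$ is a weak ${\rm CD}(K,N)$ space, $(X,d)$ is roughly isometric to $(G_1,\mathbf{d}_1)$ and, by the same Theorem, to $(G_2,\mathbf{d}_2)$. Each of the two applications supplies its own constants $a_i\geq 1$, $b_i>0$ and fullness radius coming from the construction of the respective net; these are absorbed in the next step.

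Next I would invoke the symmetry and transitivity of rough isometry (the equivalence-relation property recorded in Remark 5.3(1)): from $(G_1,\mathbf{d}_1)$ roughly isometric to $(X,d)$ and $(X,d)$ roughly isometric to $(G_2,\mathbf{d}_2)$, composition yields that $(G_1,\mathbf{d}_1)$ and $(G_2,\mathbf{d}_2)$ are themselves roughly isometric, with composite constants expressible in terms of the $a_i$ and $b_i$ should one wish to write them out. This completes the argument.

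There is no genuine obstacle here: the statement is a formal consequence of the Theorem and of the equivalence-relation property. The only point that requires a line of verification is that the defining two-sided inequality and the fullness condition indeed compose to give a rough isometry between the two nets, which is precisely the content of the equivalence-relation assertion invoked above; its routine check amounts to tracking how the multiplicative constants $a_i$ multiply and how the additive errors $b_i$ and the fullness radii add under composition.
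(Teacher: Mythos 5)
Your proof is correct and is essentially the paper's own argument: the corollary is obtained immediately by applying the preceding Theorem to each discretization and then invoking the fact that rough isometry is an equivalence relation (the paper's pointer to Remark 5.3(2), the finite-diameter statement, rather than 5.3(1), appears to be a slip, since the equivalence-relation property is what is actually used, exactly as you do). Nothing further is needed.
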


Before proceeding further, we need to remind the reader the following definition:

\begin{defn}[Volume growth]
For $x \in X$ and $r > 0$ we denote the ``volume'' growth function by:
\begin{equation}
\mathcal{V}(x,r) = \nu[B(x,r)]\,.
\end{equation}

We say that $X$ has {\it exponential (volume) growth} iff
\begin{equation}
\lim_{r \rightarrow \infty}{\sup{\frac{\mathcal{V}(x,r)}{r}}} > 0\,,
\end{equation}

and {\it polynomial (volume) growth} iff there exists $k >0$ such that
\begin{equation}
\mathcal{V}(x,r) \leq C\cdot r^k\,,
\end{equation}
($C = {\rm const.}$) for all sufficiently large $r$.
\end{defn}

For the vertices of $G(\mathcal{N})$, there exist two natural measures: the {\it counting measure} $d\iota$: $d\iota(\mathcal{M}) = |\mathcal{M}|$,  for any $\mathcal{M} \subseteq \mathcal{N}$; and the {\it ``volume'' measure} $d\mathbf{V}$: $d\mathbf{V}(p) = \rho(p)d\iota(p)$. However, in our context, one can substitute  $d\iota$ for $d\mathbf{V}$ (and, indeed, for any measure $\mu$ absolutely continuous with respect to $d\iota$) -- see \cite{Ch}, page 197 -- so we shall work with the counting measure, because its intuitive simplicity.

\begin{lem}[Roughly isometric graphs have identical growth rate]
Let $G$ and $\Gamma$ be connected, roughly isometric graphs with bounded geometry.
Then $G$ has polynomial (exponential) growth iff $G$ has polynomial (exponential) growth.
\end{lem}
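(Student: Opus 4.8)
The plan is to show that a rough isometry between graphs of bounded geometry distorts the counting-measure volume function $\mathcal{V}(x,r)=|B(x,r)|$ only by a bounded multiplicative constant together with an affine reparametrization of the radius, and that both polynomial and exponential growth are invariant under such a comparison. Since rough isometry is an equivalence relation (as recorded in the remark following its definition), it suffices to prove one implication in each case. Fix a rough isometry $f:G\to\Gamma$ with constants $a\ge 1$, $b>0$ and fullness constant $\varepsilon_1$, and let $\rho_0$ be a common bound on the vertex degrees. Bounded geometry gives the crude estimate $|B(x,r)|\le\rho_0^{\,r+1}$, so in particular every ball is finite and the growth functions are well defined.

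The key technical step, which I expect to be the main obstacle, is to show that $f$ is \emph{boundedly-to-one in a uniform sense}: there is $m_0=m_0(a,b,\rho_0)$ with $|f^{-1}(B_\Gamma(w,1))|\le m_0$ for every vertex $w\in\Gamma$. Indeed, if $f(x_1),f(x_2)\in B_\Gamma(w,1)$ then $\delta(f(x_1),f(x_2))\le 2$, and the lower inequality $\tfrac1a d(x_1,x_2)-b\le\delta(f(x_1),f(x_2))$ forces $d(x_1,x_2)\le a(2+b)$. Thus $f^{-1}(B_\Gamma(w,1))$ lies in a single $G$-ball of radius $a(2+b)$, which by bounded geometry contains at most $m_0:=\rho_0^{\,a(2+b)+1}$ vertices. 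The care required here is to ensure that this bound is uniform in $w$ and independent of any base point.

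With the multiplicity under control, I would establish a two-sided comparison. For the upper direction, $\delta(f(x_1),f(x_2))\le a\,d(x_1,x_2)+b$ yields $f(B_G(x,r))\subseteq B_\Gamma(f(x),ar+b)$, whence $|B_G(x,r)|\le m_0\,|B_\Gamma(f(x),ar+b)|$ by the multiplicity bound. For the reverse direction I would use $\varepsilon_1$-fullness: every vertex of $\Gamma$ is within $\varepsilon_1$ of some image point, so covering $B_\Gamma(f(x),r)$ by balls $B_\Gamma(f(x_i),\varepsilon_1)$, bounding each by $\rho_0^{\,\varepsilon_1+1}$ vertices, and pulling the centers $x_i$ back through the lower Lipschitz bound gives $|B_\Gamma(f(x),r)|\le\rho_0^{\,\varepsilon_1+1}\,|B_G(x,a(r+2\varepsilon_1)+ab)|$. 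The covering bookkeeping is routine, but must be arranged so that the resulting radius in $G$ remains an affine function of $r$.

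Finally I would read off invariance of the growth type. If $G$ has polynomial growth, say $|B_G(x,s)|\le Cs^k$, then $|B_\Gamma(f(x),r)|\le\rho_0^{\,\varepsilon_1+1}C\,(a(r+2\varepsilon_1)+ab)^k\le C'r^k$, so $\Gamma$ has polynomial growth, and the converse follows from the symmetric estimate. Likewise the exponential-growth condition involves $\mathcal{V}(x,r)$ only up to an affine change of the radius and a constant multiplicative factor on the volume, and is therefore preserved by the same comparison. Combining the two implications with the symmetry of rough isometry yields the stated equivalence.
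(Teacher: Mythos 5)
Your proposal is correct. Note that the paper does not actually prove this lemma at all: its ``proof'' is the single line ``See \cite{Ka}, p.~399,'' deferring entirely to Kanai. What you have written is, in substance, the standard argument behind that citation: bounded geometry gives finiteness of balls and a uniform bound on point (and unit-ball) preimages under the rough isometry, which yields the two-sided comparison $|B_G(x,r)|\le m_0\,|B_\Gamma(f(x),ar+b)|$ and $|B_\Gamma(f(x),r)|\le \rho_0^{\varepsilon_1+1}|B_G(x,a(r+2\varepsilon_1)+ab)|$, and both polynomial and exponential growth are invariant under a bounded multiplicative factor together with an affine change of radius. The only point worth adding for completeness is that your comparisons control balls centered at $x$ and $f(x)$, whereas the growth type of a graph should not depend on the base point; for connected graphs this is immediate, since $|B(y,r)|\le |B(y',r+d(y,y'))|$, so the growth type read off at $f(x)$ is the growth type of $\Gamma$. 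With that one routine remark inserted, your argument is a complete, self-contained proof of the lemma the paper leaves to the literature.
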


\begin{proof}
See \cite{Ka}, p. 399. %\cite{Ch}, p. 197.
\end{proof}

\begin{thm}[Weak ${\rm CD}(K,N)$ spaces have the same growth as their discretizations]
Let $(X,d,\nu)$ be a weak ${\rm CD}(K,N)$ space, $K \leq 0, N < \infty$, satisfying the following non-collapsing condition:
%\marginpar{\tiny \bf non-degeneracy??}

{\rm ($\ast$)} There exist $r_0, \mathcal{V}_0 > 0$ such that $\mathcal{V}(x,r_0) \geq \mathcal{V}_0$, for all $x \in X$.

Let $G$ be a discretization of $X$. Then $X$ has polynomial (exponential) volume growth iff $G$ has polynomial (exponential) volume growth.
\end{thm}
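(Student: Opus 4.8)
The plan is to follow Kanai's classical argument, reducing the assertion to the statement that the metric measure space $(X,d,\nu)$ and the net $\mathcal{N}$, equipped with the counting measure, have the same volume growth type, and then transporting this to the graph $G$. The skeleton is threefold. First, by Theorem 5.5 the spaces $(X,d)$ and $(G,\mathbf{d})$ are roughly isometric, and by Lemma 5.2 the graph $G$ has bounded geometry. Second, by Lemma 5.8 (together with the remark that the counting measure $d\iota$ may be replaced by $d\mathbf{V}$ or by any measure dominated by $d\iota$), the growth type of $G$ is a rough-isometry invariant, and under the rough isometry a combinatorial ball of radius $n$ corresponds, up to the constants $a,b$ of Definition 5.6, to a metric ball of radius $\asymp \varepsilon n$; hence the growth type of $G$ coincides with that of $r \mapsto N(x_0,r) := |\mathcal{N}\cap B(x_0,r)|$. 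Third, it therefore suffices to compare $\mathcal{V}(x_0,r)=\nu[B(x_0,r)]$ with $N(x_0,r)$. The real content lies entirely in this comparison between the continuous measure $\nu$ and the discrete count.

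First I would establish a uniform lower bound on the measure of small balls: there is a constant $c_1=c_1(K,N,r_0,\mathcal{V}_0,\varepsilon)>0$ with $\nu[B(p,\varepsilon/2)]\ge c_1$ for every $p\in X$. If $\varepsilon/2\ge r_0$ this is immediate from monotonicity of $r\mapsto\nu[B(p,r)]$ and condition~($\ast$); if $\varepsilon/2<r_0$, apply Corollary~\ref{cor:small-balls} with $z=p$ and $R=r_0$ to obtain $\nu[B(p,\varepsilon/2)]\ge c(K,N,r_0)\,\nu[B(p,r_0)]\,(\varepsilon/2)^N\ge c(K,N,r_0)\,\mathcal{V}_0\,(\varepsilon/2)^N$. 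Since the balls $B(p_i,\varepsilon/2)$ are pairwise disjoint (Definition~2.1) and those with $p_i\in B(x_0,r)$ lie inside $B(x_0,r+\varepsilon/2)$, summation yields $c_1\,N(x_0,r)\le\mathcal{V}(x_0,r+\varepsilon/2)$. This half of the comparison, which dominates the discrete count by the measure, uses only~($\ast$), and it gives at once that polynomial volume growth of $X$ forces polynomial growth of $G$.

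For the reverse inequality I would use that the $\varepsilon$-net property makes the balls $B(p_i,\varepsilon)$ cover $X$, so the centres of those meeting $B(x_0,r)$ lie in $B(x_0,r+\varepsilon)$ and $\mathcal{V}(x_0,r)\le\sum_{p_i\in B(x_0,r+\varepsilon)}\nu[B(p_i,\varepsilon)]$; the overlaps are controlled by the bounded intersection pattern (Lemma~5.2, i.e. the uniform bound on $\rho(p)$, which plays the role of $n_2$). To turn this into a bound by $N(x_0,r+\varepsilon)$ one needs a uniform \emph{upper} bound $\nu[B(p,\varepsilon)]\le c_2$, the analogue of the absolute Bishop inequality of the Riemannian case. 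Here Theorem~\ref{thm:BG++}, combined with the local doubling corollary, supplies the relative comparison $\nu[B(p,\varepsilon)]\le C(K,N,\varepsilon)\,\nu[B(p,\varepsilon/2)]$, which must be fed together with~($\ast$) to close an estimate of the shape $\mathcal{V}(x_0,r)\le c_2\,n_2\,N(x_0,r+\varepsilon)$. Once both estimates hold, $\mathcal{V}(x_0,r)\asymp N(x_0,r)$, and the growth type transfers through Theorem~5.5 and Lemma~5.8 exactly as in \cite{Ka}.

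The step I expect to be the main obstacle is precisely this uniform upper bound on $\nu[B(p,\varepsilon)]$. In Kanai's setting it is automatic, because the reference measure is the Riemannian volume, for which the \emph{absolute} Bishop inequality gives $\mathrm{Vol}\,B(p,\varepsilon)\le\int_0^\varepsilon S_K^N(t)\,dt$ uniformly in $p$; for a general weighted measure $\nu=e^{-V}\mathrm{Vol}$, or for an abstract weak ${\rm CD}(K,N)$ space, only the \emph{relative} Bishop--Gromov inequality of Theorem~\ref{thm:BG++} is available, and the non-collapsing condition~($\ast$) controls $\nu$ solely from below. The delicate task is thus to coax a genuine uniform upper bound out of the doubling property and~($\ast$) (or, more modestly, a comparison factor growing slowly enough not to change the dichotomy under the growth notions of Definition~5.9); once the two-sided comparison $\mathcal{V}(x_0,r)\asymp N(x_0,r)$ is secured, the remainder of the proof is the routine transport of the growth type along the rough isometry, as in \cite{Ka}.
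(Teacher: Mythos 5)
Your outline reproduces the paper's own proof almost step for step: the half you do complete (disjointness of the balls $B(p,\varepsilon/2)$ plus a uniform lower bound on their measure obtained from ($\ast$) and Theorem~\ref{thm:BG++}/Corollary~\ref{cor:small-balls}) is exactly the paper's ``if'' direction, and the other direction in the paper is exactly the covering argument you sketch, $B(x,r)\subseteq\bigcup_{p\in\mathcal{N}\cap B(x,r+\varepsilon)}B(p,\varepsilon)$, followed by transport along the rough isometry. The one step you could not close -- a uniform upper bound $\nu[B(p,\varepsilon)]\le c_2$ -- is precisely the step the paper assumes silently: it writes $\mathcal{V}(x,r)\le\big(\int_0^{r}S_K^N(t)\,dt\big)\,\big|\mathcal{N}\cap B(x,r+\varepsilon)\big|$ (with $\int_0^{\varepsilon}$ presumably intended), i.e.\ it bounds each $\nu[B(p,\varepsilon)]$ by the model volume. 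That is the \emph{absolute} Bishop inequality, valid in Kanai's setting ($\nu={\rm Vol}$, ${\rm Ric}\ge(n-1)K$), but it is not a consequence of weak ${\rm CD}(K,N)$, where only the relative monotonicity of Theorem~\ref{thm:BG++} is available. So your diagnosis of where the difficulty sits is exactly right, and the paper does not resolve it either.

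Moreover, your suspicion that the missing bound cannot be coaxed out of doubling and ($\ast$) is correct: under the stated hypotheses the bound, and indeed the theorem itself, fail. Take $X=\mathbb{R}$ with the Euclidean metric and $\nu=e^{W}dx$, $W(x)=\sqrt{1+x^{2}}$, i.e.\ $V=-W$. Then
\[
{\rm Ric}_{N,\nu}\;=\;-W''-\frac{(W')^{2}}{N-1}\;\ge\;-1-\frac{1}{N-1}\,,
\]
so $(\mathbb{R},|\cdot|,\nu)$ is a smooth ${\rm CD}(-2,2)$ space, hence a weak ${\rm CD}(-2,2)$ space by Villani's Theorem 29.9, cited in the paper. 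Since the density is $\ge 1$, condition ($\ast$) holds with $\mathcal{V}_0=2r_0$. Yet $\nu[B(p,\varepsilon)]\ge\varepsilon e^{|p|}$ is unbounded in $p$ (so no uniform $c_2$ exists), $\mathcal{V}(0,r)\ge 2(e^{r}-1)$ grows exponentially, while any discretization $G$ has vertex set an $\varepsilon$-separated subset of the line, so that a combinatorial ball of radius $m$ contains at most $4m+1$ vertices: $G$ has linear growth in the counting measure (and in $d\mathbf{V}$, since degrees are bounded). Thus $G$ has polynomial growth while $X$ does not, and the stated equivalence breaks in the direction that needs the upper bound. Consequently both your argument and the paper's can only be completed after strengthening the hypotheses, most naturally by a two-sided non-collapsing condition, requiring also $\nu[B(x,r_0)]\le\mathcal{V}_1$ for all $x$: then $\nu[B(p,\varepsilon)]\le C(K,N,\varepsilon,r_0)\,\mathcal{V}_1$ does follow from Theorem~\ref{thm:BG++} (monotonicity if $\varepsilon\le r_0$, the relative ratio bound otherwise), your two-sided comparison $\mathcal{V}(x_0,r)\asymp\big|\mathcal{N}\cap B(x_0,r)\big|$ holds, and the remaining transport along the rough isometry goes through as in \cite{Ka}.
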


As in the case of the proof of Theorem 5.5, the proof closely mimics (modulo the necessary modifications) the one of the original result of Kanai, as is given in \cite{Ch}, pp. 198-199.

\begin{proof}
First note that, for any $r > 0$, there exists $c(r) > 0$ such that

\[\mathcal{V}(x,r) \geq c(r)\,,\]
for any $x \in X$.

Indeed, if $r \geq r_0$ then $c(r) = \mathcal{V}_0$. If $r < r_0$, then by Theorem \ref{thm:BG++}, we have

\[\mathcal{V}(x,r) \geq \frac{\int_0^{r}{S_K^N(t)}dt}{\int_0^{r_0}{S_K^N(t)}dt}\mathcal{V}(x,r_0) \geq \frac{\int_0^{r}{S_K^N(t)}dt}{\int_0^{r_0}{S_K^N(t)}dt}\mathcal{V}_0\,.\]

Now, if $x \in G$ and $y \in B(x,r)$, by the maximality of the vertex set of $G$, it follows that there exists $p \in \mathcal{N} \cap B(y,\varepsilon)$, hence $d(x,p) < r + \varepsilon$, and it follows that

\[B(x,r) \subseteq \bigcup_{p \in \mathcal{N} \cap B(x,r+\varepsilon)}B(p,\varepsilon)\,.\]

Therefore we have

\[\mathcal{V}(x,r) \leq \int_0^{r}{S_K^N(t)}dt|\mathcal{N} \cap(x,r+\varepsilon)|\,.\]

But $G$ is roughly isometric to $X$, by Theorem 5.5, that is $\mathbf{d}(p_1,p_2) \leq ad(p_1,p_2) + b$. Therefore,

\[\mathcal{V}(x,r) \leq c_1 |\mathcal{B}(x,ad(p_1,p_2) + b)|\,,\]
where $\mathcal{B}$ denotes the ball in the combinatorial metric of $G$ and $c_1$ is a constant.

This concludes the ``only if'' part of the proof.

Conversely, for any $x \in G$ and $\rho > 0$, we have

\[c_2|G \cap B(x,\rho)| \leq \sum_{p \in G \cap B(x,\rho)}{V(p,\varepsilon/2)} \leq V(x,\varepsilon/2 + \rho)\,,\]
where $c_2 = c_2(\varepsilon/2)$ is a constant.

But (\ref{eq:lowerbound}) implies that $\beta(x,\rho) \subseteq B(x,2\varepsilon\rho)$, therefore

\[c_2|\beta(x,\rho)| \leq V(x,2\varepsilon + 2\varepsilon\rho)\,,\]
which, combined with the previous inequality concludes the ``if'' part of the proof.

\end{proof}

%From the theorem above and from ...

\begin{rem}
In \cite{Ch}, {\rm ($\ast$)} appears as a condition in the statement of the Theorem and we followed this approach. For smooth manifolds, this represents, however, a consequence of negative Ricci curvature. More precisely, ${\rm Vol}B(x,r) \geq V_0r^n$, where $V_0 = V_0(n)$, for any $r < \rm InjRad(M^n)/2$. (It appears that this is a result due to Croke \cite{Cr}.) This fact further emphasize the need (already underlined in Remark 4.17 and Question 1) for finding curvature bounds for the injectivity (conjugacy) radius in metric measure spaces.
\end{rem}

\begin{cor}
Let $X_1$, $X_2$ be weak ${\rm CD}(K,N)$ spaces, $K \leq 0, N < \infty$, satisfying condition {\rm ($\ast$)} above. Then, if $X_1,X_2$ are roughly isometric, then they have the same volume growth type.
\end{cor}

\begin{proof}
Let $G_i$ be a discretization of $X_i$, $i=1,2$. Then, by Theorem 5.4, $G_i$ is roughly isometric to $X_i$, $i=1,2$. Since rough isometry is an equivalence relation, it follows, that $G_1$ and $G_2$ are roughly isometric, hence they have the same growth rate, by Lemma 5.7. Moreover, by Theorem 5.8, $G_i$ has the growth rate of $X_i$.
\end{proof}

\begin{rem}
By \cite{Vi}, Theorem 29.9 the results above hold, of course, for smooth metric measure spaces as well.
\end{rem}

\section{Final Remarks}

We have shown that both the triangulation method (and, in consequence, the existence of quasimeromorphic mappings) and the discretization results extend easily %(one is tempted almost to say ``trivially''...)
to a wide class of spaces,  provided that they satisfy a (generalized) Gromov-Bishop Theorem.

Of course, there are also the hidden assumptions that  the base space $X$ is ``rich'' enough and the generalized curvature is sufficiently ``smooth''. In the absence of these conditions, it is not certainly that a Gromov-Bishop type theorem can be obtained. Indeed, for the discretization due to J. Ollivier \cite{O}, it is not even clear what should be the ``reference volume'' (and even if such a volume exists). Also, unfortunately, this appears to be the case for the rough curvature bounds  \cite{B}.

%------------------------------------------------------
%------------------------------------------------------

\subsection*{Acknowledgment}

The author would like to thank Anca-Iuliana Bonciocat for introducing him to the fascinating field of Ricci curvature for metric measure spaces, for answering his numerous questions, for assuring him that ``things should work'' and for her helpful corrections and suggestions, and Shahar Mendelson for his stimulating questions and for his help that made this paper possible. Thanks are also due to Allen Tannenbaum for bringing to his attention the information geometry aspect and for his encouragement.

%%%%%%%%%%%%%%%%%%%%%%%%%%%%%%%%%%%%%%%%%%%%%%%%%%%%%%%%%%%%%%%%%%%

%%%%%%%%%%%%%%%%%%%%%%%%%%%%%%%%%%%%%%%%%%%%%%%%%%%%%%%%%%%%%%%%%%%


\begin{thebibliography}{99}

\bibitem{AN}
S. Amari and H. Nagaoka, {\it Methods of Information Geometry}, Translations of Mathematical Monographs, AMS, Oxford University Press, Providence, R.I., 2000.

\bibitem{ab}
N. Amenta and M. Bern, {\it Surface reconstruction by Voronoi
filtering}, Discrete and Computational Geometry {\bf 22}, 481-504, 1999.

\bibitem{Be}
M. Berger, {\it A Panoramic View of Riemannian Geometry}.
Springer-Verlag, Berlin, 2003.


\bibitem{b-m:70}
L. M. Blumenthal and K.  Menger, {\it Studies in Geometry},
 Freeman and Co, 1970.


\bibitem{B}
A.-I. Bonciocat, personal communication.

\bibitem{BS}
A.-I. Bonciocat and K.-T. Sturm, {\it Mass transportation and rough curvature bounds for discrete spaces}, J. Funct. Anal. {\bf 256}(9), 2944-2966, 2009.

\bibitem{Br}
W. Breslin,  {\it Thick triangulations of hyperbolic n-manifolds},
Pacific Journal of Mathematics, {\bf 241}, No. 2, , 215-225,  2009.

\bibitem{ca1}
S. S. Cairns, {\it On the triangulation of regular loci}. Ann. of
Math. {\bf 35} (1934), 579-587.

\bibitem{Ch}
I. Chavel, {\it Riemannian geometry -- a modern introduction}, Cambridge Tracts in Mathematics {\bf 108}, Bambridge University Press, 1093.


\bibitem{Chg}
J. Cheeger, {\it Finiteness Theorems for Riemannian Manifolds}, Amer. J. Math. {\bf 90}, 61-74, 1970.

\bibitem{cms}
J. Cheeger, W. M\"{u}ller and R. Schrader, {\it On the Curvature of
Piecewise Flat Spaces}, Comm. Math. Phys. {\bf 92} (1984), 405-454.

\bibitem{Cr}
C. B. Croke,   {\it Some isoperimetric inequalities and eigenvalue estimates}, Ann.
Sci. E´cole Norm. Sup., Paris {\bf 13}, 419–-435, 1980.

\bibitem{DLJZHYG}
J. Dai, W. Luo, M. Jin, W. Zeng, Y. He, S.-T. Yau, S.-T. and X. Gu,
{\it Geometric accuracy analysis for discrete surface approximation},
Computer Aided Geometric Design {\bf 24}, 323–-338, 2007.

\bibitem{ep+:92}
D. B. A. Epstein  et al. (editor), {\it Word Processing in groups},
 Jones and Bartlett, 1992.

\bibitem{Fuk}
K. Fukaya, {\it Metric Riemannian Geometry}, in Handbook of differential geometry. Vol. II,
189--313, Elsevier/North-Holland, Amsterdam, 2006.

\bibitem{Ge}
T. T. Georgiou, {\it Distances and Riemannian Metrics for Spectral Density Functions}, IEEE Trans. on Signal Processing, {\bf 55}(8), 3995-4003, 2007.

\bibitem{GMRMT}
T. T. Georgiou, O. Michailovich, Y. Rathi, J. Malcolm, and A. Tannenbaum, {\it Distribution Metrics and Image Segmentation}, Linear Algebra and its Applications, {\bf 425}, (2-3), 663-672, 207.

\bibitem{Gray}
A. Gray, {\it The volume of a small geodesic ball in a Riemannian manifold}, Michigan Math. J. {\bf 20} (1973) 329-344.

\bibitem{Gr-carte}
M. Gromov,  {\it Metric structures for Riemannian and non-Riemannian spaces}, Progress in Mathematics 152,
Birkhauser, Boston, (1999)

\bibitem{GP}
K. Grove and P. Petersen, {\it Bounding homotopy types by geometry}.
Ann. of Math. {\bf 128} (1988), 195-206.

\bibitem{He}
J. Heinonen {\it Lectures on Analysis on Metric Spaces}, Springer-Verlag, New York, 2001.

%\bibitem{H}
%C. Hinde, {\it The Essence of Ricci Curvature}, PhD Thesis, Univ. of California, Los Angeles, 2008.

%\bibitem{HP}
%C. Hinde and P. Petersen, {\it Generalized Doubling Meets Poincare}, J. Geom. Anal. {\bf 17}, 485-494, 2007.

\bibitem{Ho}
I. Holopainen,  {\it Rough isometries and p–harmonic functions with finite
Dirichlet integral}, Revista Math. Iberoamer. {\bf 217}, 459–-477, 1994.

\bibitem{HLM}
J. Huang, A. B. Lee and D. Mumford {\it Statistics of Range Images}, Proc. of the CVPR 2000, 1324-1331, 2000.

\bibitem{HM}
J. Huang and D. Mumford, {\it Statistics of Natural Images and Models}, Proc. of the ICCV'99(1), 541-547, 1999.


\bibitem{Ka}
M. Kanai, {\it Rough isometries, and combinatorial approximations of geometries
of noncompact Riemannian manifolds}, J. Math. Soc. Japan {\bf 37}, 391-413, 1985.

%\bibitem{kir:64}
%W. A. Kirk, {\it On Curvature of a Metric Space at a Point},
% Pacific J. Math. {\bf 14}, 195-198, 1994.

\bibitem{L}
J. Lott, {\it Optimal transport and
nonsmooth geometry}, MSRI Workshop on Optimal Mass Transport and its Applications, 2005.
http://math.berkeley.edu/~lott/msri.pdf

\bibitem{LV}
J. Lott and C. Villani, {\it Ricci curvature for metric-measure spaces via optimal transport}, Ann. of Math. {\bf 169}, 3 (2009), 903-991.

\bibitem{mun}
J. R. Munkres, {\it Elementary Differential Topology}. (rev. ed.)
Princeton University Press, Princeton, N.J., 1966.

\bibitem{MR}
M. K. Murray and J. W. Rice, {\it Differential Geometry and Statistics}, MSAP {\bf 48}, Chapman \& Hall, London, 1993.

\bibitem{O} %
Y. Ollivier, {\it Ricci curvature of Markov chains on metric spaces}, Funct. Anal. {\bf 256}(3), 810--864, 2009.

\bibitem{pe}
K. Peltonen, {\it On the existence of quasiregular mappings}, Ann.
Acad. Sci. Fenn. Series I Math. Dissertationes, 1992.

\bibitem{Pet}
P. Petersen, {\it Riemannian Geometry}, Springer-Verlag, New York, 1998.

\bibitem{PP}
G. Ya. Perelman and A. M. Petrunin, {\it Extremal subset in Alexandrov spaces and a generalised
Liberman theorem}, St.Petersburg Math. J. {\bf 5}(1) (1994),215--227, 1994.

\bibitem{Ptr}
A. Petrunin, {\it Alexandrov meets Lott-Villani-Sturm}, preprint, 2009.


\bibitem{pla:96}
C. Plaut, {\it Spaces of Wald-Berestowskii Curvature Bounded Below},
 The Journal of Geometric Analysis {\bf 6}(1), 113-134, 1996.

\bibitem{Re}
T. Regge, {\it General relativity without coordinates}, Nuovo
Cimento {\bf 19}, 558-71, 1961.


\bibitem{s1}
E. Saucan, {\it Note on a theorem of Munkres}.  Mediterr. j. math.,
{\bf 2} (2005), 2, 215-229.

\bibitem{S2}
E. Saucan, {\it The Existence of Quasimeromorphic Mappings}, nn.
Acad. Sci. Fenn. Math., {\bf 31} (2006), 131-142.

\bibitem{S3}
E. Saucan, {\it Intrinsic Differential Geometry and the Existence of
Quasimeromorphic Mappings},
Revue Roumaine de Math. Pures et Appl., to appear.

%\bibitem{SK}
%E. Saucan and M. Katchalski, {\it The existence of thick triangulations -- an ``elementary''
%proof}, The Open Mathematics Journal, {\bf 2}, 8-11, 2009.



\bibitem{SAZ}
E. Saucan, E. Appleboim and Y. Y. Zeevi,  {\it Geometric Sampling of Manifolds for Image Representation and Processing}, Lecture Notes in Computer Science, {\bf 4485},  907-918,
Springer-Verlag, 2007.


%\bibitem{SA2}
%E. Saucan and E. Appleboim, {\it Metric Methods in Surface Triangulation},  Lecture Notes in Computer Science, {\bf 5654}, 335–-355, Springer-Verlag, Berlin, 2009.

%\bibitem{So}
%P. M. Soardi, {\it Potential Theory on Infinite Networks}, Lecture Notes in Mathematics, {\bf 1590}, Springer-Verlag, Berlin, 1994.

\bibitem{St}
K.-T. Sturm, {\it On the geometry of metric measure spaces}. I and II, Acta Math. {\bf 196} (2006),
65-–131 and 133-177.

\bibitem{tu}
P. Tukia,  {\it Automorphic Quasimeromorphic Mappings for
Torsionless Hyperbolic Groups}. Ann. Acad. Sci. Fenn. Math. {\bf 10}
(1985), 545-560.

\bibitem{Whi}
H. Whitney, {\it Geometric Integration Theory}, Princeton University Press, Princeton, 1957.

\bibitem{Vi}
C. Villani, {\it Optimal Transport, Old and New}, Grundlehren der mathematischen Wissenschaften {\bf 338}, Springer, Berlin-Heidelberg (2009)


\end{thebibliography}
\end{document}